\crefname{equation}{}{}
\crefname{lemma}{Lemma}{Lemmas}
\crefname{theorem}{Theorem}{Theorems}
\crefname{discr}{Discretization}{Discretizations}
\crefname{assumption}{Assumption}{Assumptions}
\numberwithin{equation}{section}
\DeclareMathOperator{\A}{\mathbf{A}}
\DeclareMathOperator{\B}{\mathbf{B}}
\apptocmd{\sloppy}{\hbadness 10000\relax}{}{}
\newcommand{\ssnm}[1]
{
  \left\vert\kern-0.25ex
  \left\vert\kern-0.25ex
  \left\vert
  {#1}
  \right\vert\kern-0.25ex
  \right\vert\kern-0.25ex
  \right\vert
}
\def\spher@harm#1{%
  \vbox{\hbox{%
    \offinterlineskip
    \valign{&\hb@xt@2\p@{\hss$##$\hss}\vskip.2ex\cr#1\crcr}%
  }\vskip-.36ex}%
}
\def\gshone{\spher@harm{.}}
\def\gshtwo{\spher@harm{.&.}}
\def\gshthree{\spher@harm{.&.&.}}
\let\gsh\spher@harm
\newtheorem{Assum}{Assumption}[section]
\newtheorem{lemma}{Lemma}[section]
\newtheorem{remark}{Remark}[section]
\newtheorem{theorem}{Theorem}[section]
\newtheorem{example}{Example}[section]
\def\@captype{table}\makeatother
\newlist{steps}{enumerate}{1}
\setlist[steps]{
	label=\textbf{Step  \arabic*.} ,
	align=left,
	leftmargin=!, 
	widest=Step 99., 
	itemindent=0pt,
	labelsep=0.5em,
	parsep=0.5\parskip,
	listparindent=0pt 
}
\begin{document}
	
\title{
  \Large\bf  Analysis of a fast fully discrete finite element method for fractional viscoelastic  wave propagation
\thanks
  {
    This work was supported in part by National Natural Science Foundation
    of China (12171340).
  }
}
\author{
  Hao Yuan \thanks{Email:  787023127@qq.com},
  Xiaoping Xie \thanks{Corresponding author. Email: xpxie@scu.edu.cn} \\
  {School of Mathematics, Sichuan University, Chengdu 610064, China}
}

\date{}
\maketitle

\begin{abstract}
	This paper is devoted to a numerical analysis of a  fractional viscoelastic  wave propagation model that generalizes   the  fractional  Maxwell model and  the  fractional  Zener model.
	First, we convert the    model  problem  into a velocity type integro-differential equation  and   establish   existence, uniqueness and regularity of its solution. Then   we consider a  conforming linear/bilinear/trilinear  finite element semi-discrete scheme and  a fast scheme of backward Euler full  discretization  with a sum-of-exponentials (SOE) approximation for the convolution integral, and   derive error estimates for the semi-discrete and fully discrete schemes. Finally, we provide several numerical examples to verify the theoretical results.
\end{abstract}

\medskip\noindent{\bf Keywords:} Fractional  viscoelasticity model;   wave propagation; integro-differential equation; finite element method; error estimate

\section{Introduction}
Let $\Omega\subset \mathbb{R}^d (d=2,\, 3)$ be a convex polygon domain with boundary $\partial\Omega$, and let $T$ be a positive constant. Consider the following general fractional viscoelasticity model of wave propagation:
\begin{equation}\label{model}
	\left\{
	\begin{array}{ll}
		\rho \frac{\partial^2\mathbf{u}}{\partial t^2}=\mathrm{\textbf{div}}\sigma+\mathbf{f}, & (x,t)\in\Omega\times[0,T], \\
		\sigma+\tau_\sigma^\alpha\frac{\partial^{\alpha}\sigma}{\partial t^{\alpha}}=\mathbb{C}\varepsilon(\mathbf{u})+\tau_\varepsilon^\alpha\mathbb{D}\frac{\partial^{\alpha}\varepsilon(\mathbf{u})}{\partial t^{\alpha}}, &(x,t)\in\Omega\times[0,T],\\
		\mathbf{u}=0, &  (x,t)\in\partial\Omega\times[0,T], \\
		\mathbf{u}(x,0)=\mathbf{u}_0(x),\frac{\partial\mathbf{u}}{\partial t}(x,0)=\mathbf{v}_0(x),\sigma(x,0)=\sigma_0(x),& x\in\Omega.
	\end{array}
	\right.
\end{equation}

\noindent Here  $\mathbf{u}$ is the displacement field, $\sigma=(\sigma_{ij})_{d\times d}$ is the symmetric stress tensor with its divergence given by $\textbf{div}\sigma=(\sum\limits_{i=1}^d \partial_i\sigma_{i1}, \cdots, \sum\limits_{i=1}^d \partial_i\sigma_{id} )^{\top}$, and $\varepsilon(\mathbf{u})=(\bigtriangledown \mathbf{u}+(\bigtriangledown \mathbf{u})^{\top})/2$ is the strain tensor. The parameters $\rho(>0)$, $\tau_\sigma(>0)$ and $\tau_\varepsilon(\geq 0)$ represent the mass density, the relaxation time and the retardation time, respectively. The fourth-order symmetric tensors $\mathbb{C}$ and $\mathbb{D}$ satisfy
 \begin{align}
	0 < M_0 \xi : \xi \leq \mathbb{C} \xi : \xi \leq M_1 \xi : \xi & \quad \forall \text{ symmetric tensor } \xi = (\xi_{ij})_{d \times d}, \text{ a.e. } x \in \Omega, \label{tensorC} \\
	0 < M_0 \xi : \xi \leq \mathbb{D} \xi : \xi \leq M_1 \xi : \xi & \quad \forall \text{ symmetric tensor } \xi = (\xi_{ij})_{d \times d}, \text{ a.e. } x \in \Omega, \label{tensorD}
\end{align}
where 
$M_0$ and $M_1$ are positive constants, and $\xi : \zeta = \sum_{i=1}^d \sum_{j=1}^d \xi_{ij} \zeta_{ij}$. 
Notice that for an isotropic elastic medium $\mathbb{C}\varepsilon(\mathbf{u})$ and $\mathbb{D}\varepsilon(\mathbf{u})$ take the forms 
\begin{align}\label{elastic-tensor}
	\mathbb{C}\varepsilon(\mathbf{u})=2\mu_C \varepsilon(\mathbf{u})+\lambda_C \text{div}\mathbf{u}  I, \qquad \mathbb{D}\varepsilon(\mathbf{u})=2\mu_D \varepsilon(\mathbf{u})+\lambda_D \text{div}\mathbf{u}  I,
\end{align}
respectively, where $\mu_C, \ \mu_D$,\ $\lambda_C$,\ $\lambda_D$ are the Lam\'e parameters, and $I$ the identity matrix. $\mathbf{f}$ is the body force and  $\mathbf{u}_0(x)$, $\mathbf{v}_0(x)$, $\sigma_0(x)$ are initial data.  For any function $\mathbf{w}(x,t)$ and $0<\alpha<1$,  let $\frac{\partial^{\alpha} \mathbf{w}}{\partial t^{\alpha}}$ be the $\alpha$-order  Caputo fractional derivative of $\mathbf{w}$    defined by
\begin{align}
	\frac{\partial^{\alpha} \mathbf{w}}{\partial t^{\alpha}}(t)=\frac{1}{\Gamma(1-\alpha)}\int_{0}^{t}(t-s)^{-\alpha}\frac{\partial\mathbf{w}}{\partial t}(s)\,ds.
\end{align}
It is worth noting that the classical viscoelasticity models correspond to specific choices of the relaxation and retardation times in the constitutive (second) equation of \eqref{model} with $\alpha = 1$:
\begin{itemize}
	\item The Maxwell model: $\tau_\sigma > 0$, $\tau_\varepsilon = 0$;
	\item The Zener model: $\tau_\sigma > 0$, $\tau_\varepsilon > 0$;
	\item The Kelvin-Voigt model: $\tau_\sigma = 0$, $\tau_\varepsilon > 0$.
\end{itemize}

Many materials exhibit both elastic and viscous kinematic behaviors simultaneously. This phenomenon, known as viscoelasticity, is typically characterized through the combination of ideal elements such as springs, which obey the Hooke's law, and viscous dashpots, which follow the Newton's law. Different configurations of these elements result in a variety of viscoelasticity models, including  the Maxwell model, the Zener model, and the Kelvin-Voigt model. For comprehensive treatments and applications of viscoelasticity theory, we refer the reader to \cite{1960Bland,2007Dill,1998Drozdov,  Fung1966International,1988Boundary,1962Gurtin,2000Nonlinear,Marques2012Computational} for several monographs on the development  and application  of viscoelasticity theory.

In recent decades, fractional-order differential operators, as an extension of integer-order ones, are widely used in many scientific and engineering fields, 
  due to their ability to accurately describe states or development processes that exhibit memory and hereditary characteristics. Specifically, in the context of viscoelastic materials with complex rheological properties, more and more research indicates that time-fractional viscoelasticity models  precisely characterize the dynamic behaviors of creep and relaxation, and effectively capture the effects of "fading" memory \cite{bagley1983theoretical,bagley1981fractional,   1986Bagley,blair1944analytical, caputo1971new, caputo1971linear, 2007Freed, gemant1936method, gemant1950frictional, mainardi2022fractional,rabotnov1970creep}. In the context of mathematical analysis, we refer to \cite{Larsson2010TheCG,oparnica2020well,SAEDPANAH2014201} for some wellposedness results of several   simplified fractional Zener models.

%
	

There has been considerable work in the literature on the numerical analysis of fractional viscoelasticity models. Enelund and Josefson \cite{enelund1997time} reformulated the constitutive equation of the fractional Zener model  as an integro-differential equation with a weakly singular convolution kernel, 
and conducted finite element simulations. Adolfsson et al. \cite{adolfsson2004adaptive} proposed a piecewise constant discontinuous Galerkin method for a fractional order  viscoelasticity differential constitutive equation. Subsequently, they \cite{adolfsson2008space} applied  a discontinuous Galerkin method in time and a continuous Galerkin finite element method in space to discretize an integro-differential equation modelling a quasi-static fractional viscoelasticity model. Larsson et al. \cite{Larsson2010TheCG} introduced a continuous Galerkin method for an integro-differential equation modelling a dynamic fractional order viscoelasticity model. They established a well-posedness result using semigroup theory, and derived the stability estimates and a priori error estimates. Later, they \cite{LARSSON2015196} applied a constant discontinuous Galerkin to the temporal discretization of the model and proved optimal order a priori error estimates.  Yu et al. \cite{yu2016fractional} employed finite element simulations for the fractional Zener model with an integro-differential form of constitutive equation in 3D cerebral arteries and aneurysms. Lam et al. \cite{lam2020exponential} presented a finite element scheme for the 1D fractional Zener model  with an integro-differential form of constitutive equation. In \cite{2024Liu-Xie}, Liu and Xie proposed a semi-discrete hybrid stress finite element method for a dynamic fractional viscoelasticity model and derived error estimates for the spatial semi-discrete scheme. Yuan and Xie \cite{yuan2024fastnumericalschemefractional} proposed a fast numerical scheme for the model \cref{model} with $\mathbb{C} = \mathbb{D}$.


In this paper, we carry out  a   finite element analysis for  the fractional viscoelasticity model \cref{model}. Our contributions are as follows:
\begin{itemize}
	\item 
	We   reformulate the fractional constitutive equation of model \cref{model} as   an integro-differential form to convert the original fractional viscoelasticity   model into a  velocity type integro-differential equation,  
	and   establish   existence, uniqueness and regularity results for  the resulting  model  based on the analytic semigroup theory and the Galerkin method.

	\item  We consider a  conforming  linear/bilinear/trilinear  finite element semi-discretization of the  resulting   model  and derive error estimates. 
	
	\item Based on the  spatial semi-discretization, we develop a fast full  discretization for the resulting model, where  the backward Euler scheme and a sum-of-exponentials (SOE) approximation are respectively  applied to the first-order temporal derivative of the velocity and  to   the convolution integral term  with  a convolution kernel of the Mittag-Leffler function.  We    derive error estimates for  the  proposed fully discrete scheme and provide numerical examples to test its performance. 
		
%
 \end{itemize}

The rest of this paper is arranged as follows. Section 2 introduces some preliminary results. Section 3 presents wellposedness  and regularity results for the resulting model. Section 4 carries out   error analysis for the semi-discrete scheme. Section 5 gives the fast fully discrete scheme and derives   error estimates. Finally, numerical examples are provided in Section 6 to verify the obtained theoretical results.

\section{Preliminaries}

\subsection{Notation}

For any non-negative integer $r$, let $H^r(\Omega)$ and $H^r_0(\Omega)$ denote the standard 
Sobolev spaces equipped with norm $\left\|\cdot\right\|_r$ and semi-norm $\left|\cdot\right|_r$, respectively. In particular, $H^0(\Omega) = L^2(\Omega)$ is the space of square-integrable functions, with  norm $\left\|\cdot\right\|:=\left\|\cdot\right\|_0$ and inner product $\langle\cdot,\cdot\rangle$. For simplicity, we use   boldface fonts to represent  the vector analogs of the Sobolev spaces, e.g. ${\bf H}^r(\Omega):=[H^r(\Omega)]^d.$ 

For any vector-valued  space ${\bf X}$ defined on $\Omega$ with norm $\left\|\cdot\right\|_{\bf X}$, we define
\[
L^p([0, T]; {\bf X}) := \left\{ \mathbf{w} : [0, T] \to {\bf X} \mid \left\|\mathbf{w}\right\|_{L^p({\bf X})} < \infty \right\},
\]
where
\[
\left\|\mathbf{w}\right\|_{L^p({\bf X})} := \left\{ \begin{array}{ll}
	\left( \displaystyle\int_{0}^{T} \left\|\mathbf{w}(t)\right\|_{\bf X}^p \, dt \right)^{1/p} & \text{if } 1 \leq p < \infty, \\
	\operatorname{esssup}_{0 \leq t \leq T} \left\|\mathbf{w}(t)\right\|_{\bf X} & \text{if } p = \infty.
\end{array} \right.
\]
 For simplicity,    we 
write $\mathbf{w}(t):=\mathbf{w}(x,t)$  and  $L^p({\bf X}):=L^p([0, T]; {\bf X})$. 
For an integer $r \geq 0$, the space $C^r({\bf X})$ can be defined analogously.   In subsequent analyses, ${\bf X}$ may be taken as ${\bf H}^r(\Omega)$   or ${\bf H}_0^r(\Omega)$. 


To facilitate the presentation, we use notation $C$ to denote a generic positive constant independent of the spatial mesh size $h$ and temporal mesh size $\Delta t$. Note that $C$ may depend on certain physics parameters of the model, such as $\alpha$, $T$, $\rho$, $M_0$, $M_1$, $\tau_\sigma$, $\tau_\varepsilon$, $\mu_C$, $\mu_D$, $\lambda_C$, $\lambda_D$, and may be different at its each occurrence.

\subsection{Equivalent  integro-differential equation and weak problem}
We note that  the constitutive equation  in the model \cref{model} is of the following   differential form:
\begin{equation}\label{constitutive}
	\sigma+\tau_\sigma^\alpha\frac{\partial^{\alpha}\sigma}{\partial t^{\alpha}}=\mathbb{C}\varepsilon(\mathbf{u})+\tau_\varepsilon^\alpha\mathbb{D}\frac{\partial^{\alpha}\varepsilon(\mathbf{u})}{\partial t^{\alpha}}.
\end{equation}
As shown  in \cite{yuan2024fastnumericalschemefractional},   applying  the Laplace transform and the inverse  Laplace transform to \eqref{constitutive} we obtain the following explicit expression of  $\sigma$ when $\tau_\sigma\neq 0$: 
\begin{equation}\label{sigma}
	\begin{aligned}
		\sigma=&\mathbb{C}\varepsilon(\frac{\partial\mathbf{u}}{\partial t})-\int_{0}^{t}E_{\alpha}(-(\frac{t-s}{\tau_\sigma})^\alpha)\left(\mathbb{C}-(\frac{\tau_\varepsilon}{\tau_\sigma})^\alpha\mathbb{D}\right)\varepsilon(\frac{\partial\mathbf{u}}{\partial t}(s)) \, ds+\iota_0,
	\end{aligned}
\end{equation}
where 
\begin{align}\label{iota-0}
	\iota_0:=E_{\alpha}(-(\frac{t}{\tau_\sigma})^\alpha)\big(\sigma_0-\mathbb{C}\varepsilon(\mathbf{u}_0)\big), 
\end{align} 
and   $E_{\alpha}(z)$ denotes the single parameter Mittag-Leffler function  defined by
\begin{align}\label{Mittag-Leffler}
	E_{\alpha}(z):=\sum_{j=0}^{\infty}\frac{z^j}{\Gamma(j\alpha+1)},\qquad \alpha>0.
\end{align}

\begin{remark}
For   $0<\alpha<1$ and $t\geq0$, there  holds (cf. \cite{BangtiJin2021}) 
    \begin{equation}\label{MLbound}
    	\frac{1}{1+\Gamma(1-\alpha)t^\alpha}\leq E_\alpha(-t^\alpha)\leq\frac{\Gamma(1+\alpha)}{\Gamma(1+\alpha)+t^\alpha}.
    \end{equation}
\end{remark}

Substitute the alternative constitutive equation \cref{sigma} into the first equation of \cref{model} and  introduce the velocity variable   
$$\mathbf{v}= \frac{\partial\mathbf{u}}{\partial t},$$
 then we get the following velocity type integro-differential equation of the fractional viscoelasticity model:
\begin{equation}\label{indiffvis}
	\left\{
	\begin{array}{ll}
		\frac{\partial\mathbf{v}}{\partial t}-\mathrm{\textbf{div}}\left(\mathbb{A}\varepsilon(\mathbf{v})-\displaystyle\int_{0}^{t}\beta(t-s)\mathbb{B}\varepsilon(\mathbf{v}(s)) \, ds\right)=\mathbf{F}, & (x,t)\in\Omega\times[0,T], \\
		\mathbf{v}=0,&  (x,t)\in\partial\Omega\times[0,T], \\
		\mathbf{v}(0)= \mathbf{v}_0,& x\in\Omega.
	\end{array}
	\right.
\end{equation}
where 
\begin{equation}\label{rewritenotation}
\left\{
\begin{aligned}
&\mathbb{A}:=\rho^{-1} \mathbb{C},\quad &&\mathbb{B}:= \rho^{-1}\left(\mathbb{C}-(\frac{\tau_\varepsilon}{\tau_\sigma})^\alpha\mathbb{D}\right),\\
&	\beta(t):=E_{\alpha}(-(\frac{t}{\tau_\sigma})^\alpha), \quad && \mathbf{F}:= \rho^{-1}\left(\mathbf{f}+\mathrm{\textbf{div}}\iota_0\right).
\end{aligned}
\right.
\end{equation}

We consider the following weak formulation of \cref{indiffvis}: 
For $t\in (0,T], $ find $\mathbf{v}(t) \in {\bf H}^1_0(\Omega) $ 
 such that

\begin{equation}\label{weak problem}
	\left\{
	\begin{array}{l}
		\langle\frac{\partial\mathbf{v}}{\partial t}, \mathbf{w}\rangle + a(\mathbf{v}, \mathbf{w}) - \displaystyle\int_{0}^{t} \beta(t - s) b(\mathbf{v}(s), \mathbf{w}) \, ds = \langle \mathbf{F}, \mathbf{w}\rangle, \quad \forall\ \mathbf{w} \in {\bf H}^1_0(\Omega), \\ 
		\mathbf{v}(0) = \mathbf{v}_0,
	\end{array}
	\right.
\end{equation}
where
\begin{equation}\label{a-b}
	\begin{aligned}
		&a(\mathbf{v}, \mathbf{w}):=\int_{\Omega}\mathbb{A}\varepsilon(\mathbf{v}):\varepsilon(\mathbf{w}) \, dx,\ \   
		 b(\mathbf{v}, \mathbf{w}):=\int_{\Omega}\mathbb{B}\varepsilon(\mathbf{v}):\varepsilon(\mathbf{w}) \, dx. 
	\end{aligned}
\end{equation}

 Under the assumption 
 \cref{tensorC}, we immediately have
\begin{equation}\label{a-property}
\left\{\begin{aligned}
	&a(\mathbf{v}, \mathbf{v}) \geq C\left\|\mathbf{v}\right\|_1^2,\\
	&a(\mathbf{v}, \mathbf{w})=a(\mathbf{v}, \mathbf{w}),
	\end{aligned}\right.
\quad 	\forall\  \mathbf{v}, \mathbf{w}\in {\bf H}_0^1(\Omega).
\end{equation}
 We also have the following boundedness results:
 \begin{equation}\label{bounded-property}
\left\{\begin{aligned}
	&a(\mathbf{v}, \mathbf{w}) \leq C\left\|\mathbf{v}\right\|_1\cdot\left\|\mathbf{w}\right\|_1,\\
	&b(\mathbf{v}, \mathbf{w})\leq C\left\|\mathbf{v}\right\|_1\cdot\left\|\mathbf{w}\right\|_1,
	\end{aligned}\right.
\quad 	\forall\  \mathbf{v}, \mathbf{w}\in {\bf H}_0^1(\Omega).
\end{equation}

\subsection{Basic  results}
\subsubsection{Analytic semigroup}
Define an  operator $\A: {\bf H}^2(\Omega)\cap {\bf H}_0^1(\Omega) \rightarrow {\bf L}^2(\Omega)$ by 
\begin{equation}\label{operatordef}
	\A  := -\mathrm{\textbf{div}}(\mathbb{A}\varepsilon(\mathbf{\cdot})). 
\end{equation}
From \eqref{a-property} we easily get
\begin{equation}\label{A-property}
\left\{\begin{aligned}
	&\langle \A\mathbf{v},\mathbf{v}\rangle=a(\mathbf{v}, \mathbf{v}) \geq C\left\|\mathbf{v}\right\|_1^2,\\
	& \langle \A\mathbf{v},\mathbf{w}\rangle=a(\mathbf{v}, \mathbf{w})=\langle\mathbf{v},\A\mathbf{w}\rangle.
	\end{aligned}\right.\quad 
	\forall\ \mathbf{v}, \mathbf{w}\in {\bf H}^2(\Omega)\cap {\bf H}_0^1(\Omega).
\end{equation}
We  mention that  the following  inequality (cf. \cite{Brenner2008})  also holds:
	\begin{equation}\label{regu-ineq}
		\left\|\mathbf{v}\right\|_2\leq C\left\|\A\mathbf{v}\right\|,\qquad \forall\ \mathbf{v}\in {\bf H}^2(\Omega)\cap {\bf H}_0^1(\Omega).
\end{equation}

By definition and   \eqref{A-property} we see  that $\A$ is a linear, positive, self-adjoint elliptic operator.
 Thus, From   \cite[Theorem 1 in section 6.5.1]{Evans2010}   there exists a complete orthonormal basis,
$$\left\{\bm{\varphi}_k\right\}_{k=1}^{\infty}\subset {\bf H}^2(\Omega)\cap {\bf H}_0^1(\Omega),$$
 of ${\bf L}^2(\Omega)$ and a nondecreasing sequence $\left\{\lambda_k>0\right\}_{k=1}^{\infty}$ such that
\begin{equation}\label{eigenA}
\A \bm{\varphi}_k = \lambda_k \bm{\varphi}_k,\quad k=1,2,\cdots.
\end{equation}
Meanwhile, $\left\{\lambda^{-1/2}\bm{\varphi}_k\right\}_{k=1}^{\infty}$  is an orthonormal basis of $ {\bf H}_0^1(\Omega)$  equipped with the
inner product $a(\cdot,\cdot)$.



Denote by
\begin{align}\label{semigroup} 
S(t) := \exp(-t\A) 
\end{align}
 the analytic semigroup generated by \( -\A \). 
 The following lemma  shows some semigroup properties of $S(t)$.

\begin{lemma}\label{fuzhuyinli3}
	\cite{Larsson1998,Thomee2006} For   \( 0 < \alpha < 1 \),   let \( g \in C^1({\bf H}_0^1(\Omega)) \)
	 satisfy $$ \left\|g(t)\right\| \leq C t^{-\alpha} , \quad  \left\|\frac{\partial g}{\partial t}(t)\right\| \leq C t^{-\alpha-1}, \quad \forall\ t\in (0,T] .$$ Then 
	\begin{equation}
		\left\|\A\int_{0}^{t} S(t-s) g(s) \, ds\right\| \leq C t^{-\alpha}, \quad \forall\ t\in (0,T] .
	\end{equation}
	Furthermore, 
	for any non-negative integer $k$ and $ t\in (0,T]$ there hold
	\begin{equation}\label{sgregularity}
		\left\{\begin{aligned}
			&\left\|\frac{\partial^kS(t)}{\partial t^k} \mathbf{v}\right\|_2 \leq C t^{-k} \left\|\mathbf{v}\right\|_2, \quad & \forall \ &\mathbf{v} \in {\bf H}^2(\Omega)\cap {\bf H}_0^1(\Omega),\\
			&\left\|\frac{\partial S(t)}{\partial t} \mathbf{v}\right\|\leq C t^{-\frac{1}{2}}\left\|\mathbf{v}\right\|_1, \quad & \forall \ &\mathbf{v} \in {\bf H}^1_0(\Omega),\\
			&\left\|\frac{\partial^kS(t)}{\partial t^k} \mathbf{v}\right\|_{2j} \leq C t^{-k-j} \left\|\mathbf{v}\right\|, \quad & \forall \ &\mathbf{v} \in {\bf L}^2(\Omega), \   \ j = 0, 1.
		\end{aligned}
		\right.
	\end{equation}
\end{lemma}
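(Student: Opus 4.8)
The plan is to reduce both claims to elementary scalar estimates via the spectral decomposition \eqref{eigenA}, the only genuinely delicate point being the convolution bound, where the unboundedness of $\A$ forces a splitting argument.

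First consider the smoothing estimates \eqref{sgregularity}. Expanding $\mathbf{v}=\sum_k v_k\bm{\varphi}_k$ with $v_k=\langle\mathbf{v},\bm{\varphi}_k\rangle$, the spectral calculus gives $\frac{\partial^k S(t)}{\partial t^k}\mathbf{v}=\sum_k(-\lambda_k)^k e^{-\lambda_k t}v_k\bm{\varphi}_k$. Three norm identities are available: Parseval's identity $\|\mathbf{v}\|^2=\sum_k v_k^2$ in ${\bf L}^2(\Omega)$; the equivalence $\|\mathbf{v}\|_2^2\simeq\|\A\mathbf{v}\|^2=\sum_k\lambda_k^2 v_k^2$, where the upper bound is boundedness of the second-order operator $\A$ (a consequence of \eqref{tensorC}) and the lower bound is the elliptic estimate \eqref{regu-ineq}; and $\|\mathbf{v}\|_1^2\simeq a(\mathbf{v},\mathbf{v})=\sum_k\lambda_k v_k^2$ from \eqref{a-property}--\eqref{bounded-property} (equivalently, the orthonormality of $\{\lambda_k^{-1/2}\bm{\varphi}_k\}$ in $({\bf H}_0^1(\Omega),a(\cdot,\cdot))$). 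Combining these with the elementary bound $\sup_{\lambda>0}\lambda^m e^{-\lambda t}=(m/e)^m t^{-m}\le Ct^{-m}$ for each fixed non-negative integer $m$ yields all three inequalities after taking square roots: for instance $\|\frac{\partial^k S(t)}{\partial t^k}\mathbf{v}\|_2^2\simeq\sum_k\lambda_k^{2k+2}e^{-2\lambda_k t}v_k^2\le\big(\sup_{\lambda>0}\lambda^{2k}e^{-2\lambda t}\big)\sum_k\lambda_k^2 v_k^2\le Ct^{-2k}\|\mathbf{v}\|_2^2$, and the remaining two are identical with the weights $\lambda_k$ (for the $\|\cdot\|_1$-to-$\|\cdot\|$ bound) and $\lambda_k^{2j}$ (for the $\|\cdot\|$-to-$\|\cdot\|_{2j}$ bound, $j=0,1$). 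In particular this also gives $\|\A S(\tau)\|_{\mathcal{L}({\bf L}^2(\Omega))}\le C\tau^{-1}$, which is needed below.

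For the convolution estimate, split $\int_0^t=\int_0^{t/2}+\int_{t/2}^t$. On $[0,t/2]$ one has $t-s\ge t/2$, so the kernel is smoothing and $\|\A\int_0^{t/2}S(t-s)g(s)\,ds\|\le Ct^{-1}\int_0^{t/2}\|g(s)\|\,ds\le Ct^{-1}\int_0^{t/2}s^{-\alpha}\,ds\le Ct^{-\alpha}$. On $[t/2,t]$, substitute $\tau=t-s$ and write $g(t-\tau)=g(t)-h(\tau)$ with $h(\tau):=\int_{t-\tau}^t\frac{\partial g}{\partial t}(r)\,dr$, so that $h(0)=0$ and, since $r\ge t-\tau\ge t/2$, $\|h(\tau)\|\le C\int_{t-\tau}^t r^{-\alpha-1}\,dr\le C\tau t^{-\alpha-1}$. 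Then, using the standard semigroup identity $\A\int_0^{t/2}S(\tau)\,d\tau=I-S(t/2)$ on ${\bf L}^2(\Omega)$, we get $\A\int_0^{t/2}S(\tau)g(t-\tau)\,d\tau=(I-S(t/2))g(t)-\int_0^{t/2}\A S(\tau)h(\tau)\,d\tau$. The first term is bounded by $2\|g(t)\|\le Ct^{-\alpha}$ (contraction property $\|S(\cdot)\|_{\mathcal{L}({\bf L}^2(\Omega))}\le1$), and in the second term the crucial cancellation $\|\A S(\tau)h(\tau)\|\le C\tau^{-1}\cdot C\tau t^{-\alpha-1}=Ct^{-\alpha-1}$ makes the integrand integrable on $[0,t/2]$ (which also justifies that $S(\cdot)h(\cdot)$ takes values in $D(\A)$ and that $\A$ commutes through the integral), giving $\|\int_0^{t/2}\A S(\tau)h(\tau)\,d\tau\|\le Ct^{-\alpha-1}\cdot\tfrac{t}{2}\le Ct^{-\alpha}$. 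Adding the two pieces yields the claim.

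The main obstacle is precisely this last estimate: because $\|\A S(\tau)\|_{\mathcal{L}({\bf L}^2(\Omega))}$ only decays like $\tau^{-1}$, one cannot move $\A$ naively through $\int_0^{t/2}S(\tau)g(t-\tau)\,d\tau$, since $\int_0^{t/2}\tau^{-1}\,d\tau$ diverges. The device that resolves this is the Taylor-type splitting $g(t-\tau)=g(t)+(\text{an }O(\tau)\text{ remainder})$, which simultaneously isolates a constant piece annihilated by $\A\int_0^{t/2}S(\tau)\,d\tau=I-S(t/2)$ and supplies the extra factor of $\tau$ cancelling the $\tau^{-1}$ singularity; once this is in place the argument is routine, resting only on the scalar bound $\sup_{\lambda>0}\lambda^m e^{-\lambda t}\le Ct^{-m}$ and on elementary estimates for $\int s^{-\alpha}\,ds$ and $\int r^{-\alpha-1}\,dr$ with $0<\alpha<1$.
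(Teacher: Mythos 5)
Your proof is correct. Note that the paper itself gives no proof of this lemma --- it is quoted from \cite{Larsson1998,Thomee2006} --- so there is nothing internal to compare against; your argument is essentially the standard one from those references: the spectral calculus $S(t)\mathbf{v}=\sum_k e^{-\lambda_k t}\langle\mathbf{v},\bm{\varphi}_k\rangle\bm{\varphi}_k$ together with the norm equivalences $\|\cdot\|_2\simeq\|\A\cdot\|$, $\|\cdot\|_1^2\simeq a(\cdot,\cdot)$ and $\sup_{\lambda>0}\lambda^m e^{-\lambda t}\le Ct^{-m}$ for \eqref{sgregularity}, and for the convolution bound the near-field/far-field splitting in which the far field uses $\|\A S(t-s)\|_{\mathcal{L}({\bf L}^2)}\le C(t-s)^{-1}$ with $\int_0^{t/2}s^{-\alpha}\,ds\le Ct^{1-\alpha}$, while the near field uses the decomposition $g(t-\tau)=g(t)-h(\tau)$ with $\A\int_0^{t/2}S(\tau)\,d\tau=I-S(t/2)$ and the cancellation $\|h(\tau)\|\le C\tau t^{-\alpha-1}$ against the $\tau^{-1}$ singularity. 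All steps check out, including the correct use of only the hypothesized ${\bf L}^2$-bounds on $g$ and $\partial_t g$ and the closedness of $\A$ to pass it under the integral sign.
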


\begin{lemma}
	For $T>0$ and $0<\alpha<1$, there holds
	\begin{equation}\label{ML-t-bound}
		\left|\frac{d E_\alpha(-t^\alpha)}{dt}\right|\leq Ct^{-\alpha-1},\qquad \forall\ t\in(0,T].
	\end{equation}
\end{lemma}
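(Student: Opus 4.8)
The plan is to differentiate the power-series definition \eqref{Mittag-Leffler} of $E_\alpha(-t^\alpha)$ termwise, identify the resulting series as a two-parameter Mittag-Leffler function, and then exploit that the latter is bounded on the finite interval $[0,T]$. Since $E_\alpha$ is entire and $t\mapsto -t^\alpha$ is smooth on $(0,\infty)$, the composite $t\mapsto E_\alpha(-t^\alpha)$ is $C^\infty$ there and may be differentiated termwise. Using \eqref{Mittag-Leffler}, the identity $\Gamma(j\alpha+1)=j\alpha\,\Gamma(j\alpha)$, and a shift of the summation index, one arrives at the classical formula
\begin{equation*}
  \frac{d}{dt}E_\alpha(-t^\alpha)
  =\sum_{j=1}^{\infty}\frac{(-1)^{j}j\alpha\,t^{j\alpha-1}}{\Gamma(j\alpha+1)}
  =-\,t^{\alpha-1}\sum_{k=0}^{\infty}\frac{(-t^\alpha)^{k}}{\Gamma(k\alpha+\alpha)}
  =-\,t^{\alpha-1}E_{\alpha,\alpha}(-t^\alpha),
\end{equation*}
where $E_{\alpha,\alpha}(z):=\sum_{k\ge0}z^{k}/\Gamma(k\alpha+\alpha)$.

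It then remains to estimate $E_{\alpha,\alpha}(-t^\alpha)$. Because $\Gamma(k\alpha+\alpha)$ grows super-exponentially in $k$, the function $E_{\alpha,\alpha}$ is entire, so $t\mapsto E_{\alpha,\alpha}(-t^\alpha)$ is continuous (hence bounded) on the compact interval $[0,T]$; concretely, $|E_{\alpha,\alpha}(-t^\alpha)|\le\sum_{k\ge0}T^{k\alpha}/\Gamma(k\alpha+\alpha)=E_{\alpha,\alpha}(T^\alpha)=:C$ for all $t\in[0,T]$. (If a sharp statement were wanted one could instead invoke the decay bound $|E_{\alpha,\alpha}(-x)|\le C/(1+x)$, $x\ge0$, cf. \cite{BangtiJin2021}, but the crude bound suffices.) Combining the two estimates, for $t\in(0,T]$,
\begin{equation*}
  \left|\frac{d}{dt}E_\alpha(-t^\alpha)\right|\le C\,t^{\alpha-1}
  = C\,t^{2\alpha}\,t^{-\alpha-1}\le C\,T^{2\alpha}\,t^{-\alpha-1},
\end{equation*}
which yields \eqref{ML-t-bound} after absorbing $T^{2\alpha}$ into the constant.

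I do not expect a real obstacle here: the only steps needing a little care are the justification of termwise differentiation (immediate from the analyticity of $E_\alpha$) and the Gamma-function bookkeeping that rewrites the differentiated series as $-t^{\alpha-1}E_{\alpha,\alpha}(-t^\alpha)$. It is worth noting that the argument in fact delivers the stronger bound $\left|\frac{d}{dt}E_\alpha(-t^\alpha)\right|\le C t^{\alpha-1}$; the weaker exponent $-\alpha-1$ is retained here only because that is the power of $t$ appearing in the hypotheses of \cref{fuzhuyinli3}, where this lemma will subsequently be used.
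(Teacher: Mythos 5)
Your proof is correct and follows essentially the same route as the paper: both reduce to the identity $\frac{d}{dt}E_\alpha(-t^\alpha)=-t^{\alpha-1}E_{\alpha,\alpha}(-t^\alpha)$ and then use boundedness of $E_{\alpha,\alpha}(-t^\alpha)$ on $[0,T]$ together with $t^{\alpha-1}=t^{2\alpha}t^{-\alpha-1}\le T^{2\alpha}t^{-\alpha-1}$. You in fact supply more detail than the paper (the termwise differentiation and the explicit crude bound $E_{\alpha,\alpha}(T^\alpha)$), and your closing remark that the true bound is the stronger $Ct^{\alpha-1}$ is accurate.
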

\begin{proof}
	Due to 
	\begin{align*}
		\left|\frac{d E_\alpha(-t^\alpha)}{dt}\right|=t^{\alpha-1}E_{\alpha,\alpha}(-t^\alpha).
	\end{align*}
	Note that $t^{2\alpha}E_{\alpha,\alpha}(-t^\alpha)$ is bounded in $[0,T]$. Then, there exists a constant $C$ such that for $0\leq t\leq T$
	\begin{align*}
		t^{2\alpha}E_{\alpha,\alpha}(-t^\alpha)\leq C,
	\end{align*}
	which indicates
	\begin{align*}
		\left|\frac{d E_\alpha(-t^\alpha)}{dt}\right|=t^{-\alpha-1}\cdot t^{2\alpha}E_{\alpha,\alpha}(-t^\alpha)\leq Ct^{-\alpha-1}.
	\end{align*}
\end{proof}


\subsubsection{Volterra integro-differential system}

Let  $ {m}$ be a positive integer. Consider the Volterra integro-differential  system 
 \begin{equation}\label{volterraintegral-eq}
		\mathbf{y}(t)-\int_{0}^{t}\mathbf{G}(t-s)\mathbf{y}(s) \, ds=\mathbf{g}(t), \qquad t\in[0,T],
	\end{equation}
where  $\mathbf{g}(t)=\left(g_1(t),g_2(t),\cdots,g_{ {m}}(t)\right)^{\top}$ is a  vector-valued function  and $\mathbf{G}(t)=(G_{ij}(t))_{i,j=1}^{ {m}}$ is a matrix-valued function.  We have the following lemma:
\begin{lemma}\label{solutioncontinuous}
	\cite{Brunner_2017} 
Suppose that the functions $\mathbf{g}$ and $\mathbf{G}$ satisfy	$$ g_i,  G_{ij}\in C^0[0,T]\cap C^1(0,T].$$
	 Then the  system \cref{volterraintegral-eq}
		admits a unique   solution 
	$\mathbf{y}(t)=\left(y_1(t),y_2(t),\cdots,y_{ {m}}(t)\right)^{\top}$ with $y_i\in C^0[0,T]\cap C^1(0,T] $. 
\end{lemma}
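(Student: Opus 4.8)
The plan is to read \cref{volterraintegral-eq} as a linear fixed-point equation $\mathbf{y} = \mathbf{g} + K\mathbf{y}$ on the Banach space $C^0([0,T];\mathbb{R}^{m})$, where $(K\mathbf{y})(t):=\int_{0}^{t}\mathbf{G}(t-s)\mathbf{y}(s)\,ds$, first settling existence, uniqueness and continuity by exploiting the Volterra (triangular) structure, and then upgrading the regularity to $C^1(0,T]$ by differentiating the equation. The hard part will be the behaviour near $t=0$ in the second step.

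\emph{Existence, uniqueness and continuity in $C^0[0,T]$.} Since $G_{ij}\in C^0[0,T]$, the matrix $\mathbf{G}$ is bounded, say $M:=\max_{i,j}\|G_{ij}\|_{C^0[0,T]}$, and the standard continuity-of-translation argument shows that $K$ maps $C^0([0,T];\mathbb{R}^{m})$ into itself, hence so does $\mathbf{y}\mapsto\mathbf{g}+K\mathbf{y}$. Equipping this space with the weighted norm $\|\mathbf{y}\|_\lambda:=\sup_{0\le t\le T}e^{-\lambda t}\max_i|y_i(t)|$, a direct estimate gives $\|K\mathbf{y}\|_\lambda\le (mM/\lambda)\,\|\mathbf{y}\|_\lambda$, so $\mathbf{y}\mapsto \mathbf{g}+K\mathbf{y}$ is a contraction as soon as $\lambda>mM$; the Banach fixed-point theorem then yields a unique $\mathbf{y}\in C^0([0,T];\mathbb{R}^{m})$ solving \cref{volterraintegral-eq}. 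Equivalently, the iterated kernels satisfy $|(K^{n}\mathbf{y})(t)|\le (mMt)^{n}/n!\,\|\mathbf{y}\|_{C^0}$, so the Neumann series $\mathbf{y}=\sum_{n\ge0}K^{n}\mathbf{g}$ converges in $C^0[0,T]$; one may also introduce the resolvent kernel $\mathbf{R}=\sum_{n\ge1}\mathbf{G}^{*n}\in C^0([0,T];\mathbb{R}^{m\times m})$ and write $\mathbf{y}(t)=\mathbf{g}(t)+\int_{0}^{t}\mathbf{R}(t-s)\mathbf{g}(s)\,ds$.

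\emph{$C^1$ regularity on $(0,T]$.} Fix $t\in(0,T]$ and work on a subinterval $[\varepsilon,T]$ with $0<\varepsilon<t$, on which $g_i$ and $G_{ij}$ are $C^1$. For each $i$ I would differentiate $y_i(t)=g_i(t)+\sum_j\int_{0}^{t}G_{ij}(t-s)y_j(s)\,ds$: splitting each convolution as $\int_{0}^{t-\delta}+\int_{t-\delta}^{t}$, in the first piece the argument $t-s$ stays bounded away from $0$ so differentiation under the integral sign is legitimate and produces $\int_{0}^{t-\delta}G_{ij}'(t-s)y_j(s)\,ds$, while the near-diagonal piece, handled by continuity of $G_{ij}$ at $0$, contributes the boundary term $G_{ij}(0)\,y_j(t)$ in the limit. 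One thus obtains the differentiated equation $\mathbf{y}'(t)=\mathbf{g}'(t)+\mathbf{G}(0)\mathbf{y}(t)+\int_{0}^{t}\mathbf{G}'(t-s)\mathbf{y}(s)\,ds$ for $t\in(0,T]$, whose right-hand side is continuous in $t$ on $(0,T]$ because $\mathbf{g}'$ is continuous there, $\mathbf{y}$ is continuous, and the convolution with $\mathbf{G}'$ depends continuously on $t$. Hence $y_i\in C^0[0,T]\cap C^1(0,T]$, as claimed.

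\emph{Main obstacle.} The delicate point is exactly the passage to the limit near $t=0$: $\mathbf{G}'$ need not be bounded at the origin, so one must make sense of $\int_{0}^{t}\mathbf{G}'(t-s)\mathbf{y}(s)\,ds$ and justify the limit in the difference quotient of the convolution. This is where the $C^1(0,T]$ hypothesis, combined with the boundedness coming from $C^0[0,T]$, is used: the singularity of $\mathbf{G}'$ at $0$ is of integrable type, so the convolution term is a well-defined continuous function of $t$ and the Leibniz-type formula above holds uniformly on compact subintervals of $(0,T]$. Alternatively, one may simply invoke the classical existence, uniqueness and regularity theory for linear Volterra integral equations of the second kind in \cite{Brunner_2017}, of which this lemma is the vector-valued instance.
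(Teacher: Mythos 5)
The paper offers no proof of this lemma; it is quoted from \cite{Brunner_2017}, so there is no in-paper argument to compare against. Your route is the standard one for linear Volterra equations of the second kind: your first step (the fixed-point/Neumann-series/resolvent-kernel argument with the Bielecki weighted norm $\|\mathbf{y}\|_\lambda=\sup_t e^{-\lambda t}\max_i|y_i(t)|$, giving $\|K\mathbf{y}\|_\lambda\le (mM/\lambda)\|\mathbf{y}\|_\lambda$) is correct and complete for existence, uniqueness and $C^0[0,T]$ regularity.

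The second step has a genuine gap, which you yourself point at and then dismiss. Your differentiated identity $\mathbf{y}'(t)=\mathbf{g}'(t)+\mathbf{G}(0)\mathbf{y}(t)+\int_0^t\mathbf{G}'(t-s)\mathbf{y}(s)\,ds$ requires $\int_0^t\|\mathbf{G}'(t-s)\|\,ds<\infty$, i.e.\ $\mathbf{G}'\in L^1$ near the origin, and you assert that ``the singularity of $\mathbf{G}'$ at $0$ is of integrable type.'' That does \emph{not} follow from the stated hypothesis $G_{ij}\in C^0[0,T]\cap C^1(0,T]$: the scalar function $G(t)=t^2\sin(t^{-2})$, $G(0)=0$, is continuous on $[0,T]$ and $C^1$ on $(0,T]$, yet $G'(t)=2t\sin(t^{-2})-2t^{-1}\cos(t^{-2})$ is not integrable near $0$, so your near-diagonal limit in the difference quotient cannot be taken by domination and the convolution term in your formula is not even well defined. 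To close the argument you must either add an explicit integrability/weak-singularity hypothesis such as $\|\mathbf{G}'(t)\|\le Ct^{\mu-1}$ with $\mu>0$ (this is what the relevant theorems in \cite{Brunner_2017} assume), or observe that it suffices for the paper's purposes: in the only place the lemma is invoked (the proof of \cref{solutionofintegrodifferential}), the kernel actually passed in is $\tilde{\mathbf{G}}(t)=-M_{\gamma_1}+\int_0^t\mathbf{G}(\tau)\,d\tau$ with $\mathbf{G}=\beta(t)I$ continuous on $[0,T]$, so $\tilde{\mathbf{G}}\in C^1[0,T]$ and your Leibniz computation is fully justified there. As written, though, your proof establishes the lemma only under a stronger hypothesis than the one stated, and you should say so explicitly rather than asserting integrability of $\mathbf{G}'$ without proof.
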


Based on \cref{solutioncontinuous} and a standard  technique, 
we can obtain the following result:

\begin{lemma}\label{solutionofintegrodifferential}
Let  $ M_{\gamma_1},M_{\gamma_2}$ be two $ {m}\times {m}$ matrices. Under the same conditions as in Lemma \ref{solutioncontinuous},  the  system 
 \begin{equation}\label{volterraintediff-eq}
  \left\{
  \begin{aligned}
   &\frac{\partial \mathbf{z}}{\partial t}(t)+M_{\gamma_1}\mathbf{z}(t)-M_{\gamma_2}\int_{0}^{t}\mathbf{G}(t-s)\mathbf{z}(s) \, ds=\mathbf{g}(t), \qquad t\in(0,T], \\
   &\mathbf{z}(0) = \mathbf{z}_0
  \end{aligned}
  \right.
 \end{equation} 
 admits a unique solution $\mathbf{z}(t)=\left(z_1(t),z_2(t),\cdots,z_{ {m}}(t)\right)^{\top}$ with $\mathbf{z}_i\in C^1[0,T]\cap C^2(0,T]$.
\end{lemma}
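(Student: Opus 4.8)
The plan is to reduce the integro-differential system \cref{volterraintediff-eq} to a second-kind Volterra \emph{integral} system of the type covered by \cref{solutioncontinuous}, solve that, and then recover the claimed regularity by a bootstrap argument.

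First I would integrate the differential equation in \cref{volterraintediff-eq} from $0$ to $t$ and use $\mathbf{z}(0)=\mathbf{z}_0$: this turns the first-order term into $\mathbf{z}(t)-\mathbf{z}_0$ and the term $M_{\gamma_1}\mathbf{z}$ into $M_{\gamma_1}\int_0^t\mathbf{z}(r)\,dr$. For the double time-integral of the memory term I would apply Fubini's theorem on the simplex $\{0\le s\le r\le t\}$ to obtain
\[
\int_0^t\!\!\int_0^r \mathbf{G}(r-s)\mathbf{z}(s)\,ds\,dr=\int_0^t\Bigl(\int_0^{t-s}\mathbf{G}(u)\,du\Bigr)\mathbf{z}(s)\,ds .
\]
Setting $\mathbf{G}^{\ast}(\tau):=M_{\gamma_2}\int_0^{\tau}\mathbf{G}(u)\,du-M_{\gamma_1}$ and $\mathbf{g}^{\ast}(t):=\mathbf{z}_0+\int_0^t\mathbf{g}(r)\,dr$, the system becomes
\[
\mathbf{z}(t)-\int_0^t \mathbf{G}^{\ast}(t-s)\mathbf{z}(s)\,ds=\mathbf{g}^{\ast}(t),\qquad t\in[0,T].
\]
Since $\mathbf{G}\in C^0[0,T]$ and $\mathbf{g}\in C^0[0,T]$, their antiderivatives $\mathbf{G}^{\ast}$ and $\mathbf{g}^{\ast}$ lie in $C^1[0,T]\subset C^0[0,T]\cap C^1(0,T]$, so \cref{solutioncontinuous} applies and gives a unique solution $\mathbf{z}$ with components in $C^0[0,T]\cap C^1(0,T]$. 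Because integrating \cref{volterraintediff-eq} is reversible — once $\mathbf{z}$ is known to be differentiable, differentiating the integral equation returns \cref{volterraintediff-eq} — existence and uniqueness transfer to the original system.

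It then remains to upgrade the regularity to $z_i\in C^1[0,T]\cap C^2(0,T]$. For the first step I would differentiate the integral equation as $\mathbf{z}'(t)=\mathbf{g}^{\ast\prime}(t)+\mathbf{G}^{\ast}(0)\mathbf{z}(t)+\int_0^t\mathbf{G}^{\ast\prime}(t-s)\mathbf{z}(s)\,ds$; here $\mathbf{g}^{\ast\prime}=\mathbf{g}$, $\mathbf{G}^{\ast\prime}=M_{\gamma_2}\mathbf{G}$ and $\mathbf{G}^{\ast}(0)=-M_{\gamma_1}$ are all continuous on $[0,T]$ and $\mathbf{z}\in C^0[0,T]$, so the right-hand side is continuous on $[0,T]$; this identity is in fact exactly \cref{volterraintediff-eq}, whence $\mathbf{z}\in C^1[0,T]$. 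For the second step, working on $(0,T]$, I would differentiate \cref{volterraintediff-eq} once more: $\mathbf{z}''(t)=\mathbf{g}'(t)-M_{\gamma_1}\mathbf{z}'(t)+M_{\gamma_2}\frac{d}{dt}\int_0^t\mathbf{G}(t-s)\mathbf{z}(s)\,ds$; since now $\mathbf{z}\in C^1[0,T]$, after the substitution $u=t-s$ the memory term differentiates to $\mathbf{G}(t)\mathbf{z}(0)+\int_0^t\mathbf{G}(u)\mathbf{z}'(t-u)\,du$, which is continuous on $(0,T]$ (the boundary term uses $\mathbf{G}\in C^1(0,T]$, the integral uses $\mathbf{G}\in C^0[0,T]$ and $\mathbf{z}'\in C^0[0,T]$); together with $\mathbf{g}'\in C^0(0,T]$ and $\mathbf{z}'\in C^0[0,T]$ this gives $\mathbf{z}\in C^2(0,T]$.

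The routine checks — the Fubini interchange, differentiation under the integral sign, and the equivalence of the two formulations — are standard; the only genuinely delicate point is bookkeeping the loss of smoothness near $t=0$. The data $\mathbf{g}$ and the kernel $\mathbf{G}$ are only $C^1$ on the open interval, and it is precisely because the memory term gains one derivative (its kernel $\mathbf{G}$ being continuous up to $0$) that no singularity of the memory term obstructs reaching $C^2(0,T]$; the sole obstruction at the endpoint $t=0$ is $\mathbf{g}'$, which is why the endpoint regularity stays at $C^1[0,T]$.
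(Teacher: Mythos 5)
Your proof is correct. The underlying idea is the same as the paper's (reduce to a second-kind Volterra integral system and invoke \cref{solutioncontinuous}), but the execution differs in which unknown the integral equation is posed for. The paper substitutes $\mathbf{y}=\partial\mathbf{z}/\partial t$, writes $\mathbf{z}(t)=\mathbf{z}_0+\int_0^t\mathbf{y}$, and obtains a Volterra equation of the form $\mathbf{y}(t)-\int_0^t\tilde{\mathbf{G}}(t-s)\mathbf{y}(s)\,ds=\tilde{\mathbf{g}}(t)$ with $\tilde{\mathbf{G}}$ an antiderivative-type kernel and $\tilde{\mathbf{g}}$ inheriting the regularity of $\mathbf{g}$; \cref{solutioncontinuous} then yields $y_i\in C^0[0,T]\cap C^1(0,T]$ in one shot, and the claimed regularity of $\mathbf{z}$ follows immediately by integration. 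You instead integrate the equation to get a Volterra equation for $\mathbf{z}$ itself, whose data $\mathbf{G}^{\ast},\mathbf{g}^{\ast}$ are smoother ($C^1[0,T]$), and then recover the stated regularity by differentiating twice. The paper's route is shorter because the lemma delivers the top-order regularity directly; your route has the small advantage that the passage between the original and the reformulated problem is cleaner (posing the equation for $\mathbf{z}'$ tacitly presupposes differentiability of $\mathbf{z}$, a point you avoid and instead handle explicitly in your bootstrap), and your tracking of exactly where the $C^2$ regularity fails at $t=0$ (only through $\mathbf{g}'$) is more informative than the paper's one-line conclusion. One cosmetic remark: in your final step the boundary term $\mathbf{G}(t)\mathbf{z}(0)$ only needs continuity of $\mathbf{G}$ on $[0,T]$, not $\mathbf{G}\in C^1(0,T]$; this is an over-requirement, not an error.
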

\begin{proof}
 Denote $\mathbf{y}=\frac{\partial \mathbf{z}}{\partial t}$, then we have
  \begin{align}\label{212}
   \mathbf{z}(t)=\mathbf{z}_0+\int_{0}^{t}\mathbf{y}(\tau) \, d\tau.
  \end{align}
  Rewriting \cref{volterraintediff-eq} in terms of $\mathbf{y}$, we get
  \begin{equation}\label{213}
   \mathbf{y}(t)-\int_{0}^{t}\tilde{\mathbf{G}}(t-s)\mathbf{y}(s) \, ds=\tilde{\mathbf{g}}(t),
  \end{equation}
  where
  \begin{align*}
   &\tilde{\mathbf{G}}(t)=-M_{\gamma_1}+\int_{0}^{t}\mathbf{G}(\tau) \, d\tau,\quad
   \tilde{\mathbf{g}}(t)=\mathbf{g}(t)-M_{\gamma_1}\mathbf{z}_0-M_{\gamma_2}\mathbf{z}_0\int_{0}^{t}\mathbf{G}(t-\tau) \, d\tau.
  \end{align*}
  By \cref{solutioncontinuous} we know that the equation \eqref{213} admits a unique solution $\mathbf{y}(t)=\left(y_1(t),y_2(t),\cdots,y_{ {m}}(t)\right)^{\top}$ with $y_i\in C^0[0,T]\cap C^1(0,T] $.
  As a result, the desired conclusion follows from \eqref{212}.
  \end{proof}

\subsubsection{Several   inequalities}

Let us first introduce two types of Gronwall's inequalities (cf. \cite{Thomee2006}):

\begin{lemma}{\bf{(Continuous Gronwall's inequality)}}
	\label{fuzhuyinli2}
	 Let   \( 0 < \alpha < 1 \) and let \( y(t) \) and \( b(t) \) be two nonnegative functions. Suppose that
	\begin{equation}
		y(t) \leq b(t) + \gamma \int_{0}^{t} (t-s)^{-\alpha} y(s) \, ds, \qquad \forall\ t \in [0, T]
	\end{equation}
	with \( \gamma \geq 0 \), then there exists a constant $C>0$ such that
	\begin{equation}
		y(t) \leq b(t) + C \int_{0}^{t} (t-s)^{-\alpha} b(s) \, ds, \qquad \forall\ t \in [0, T].
	\end{equation}
	Furthermore, if \( b(t) \) is a nondecreasing function of \( t \), then   	\begin{equation}
		y(t) \leq C b(t), \qquad \forall\ t \in [0, T].
	\end{equation}
\end{lemma}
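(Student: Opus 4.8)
The plan is to run a Picard (Neumann series) iteration on the weakly singular integral operator
\[
(Ky)(t) := \gamma \int_{0}^{t} (t-s)^{-\alpha} y(s)\,ds ,
\]
so that the hypothesis becomes $y \le b + Ky$ pointwise on $[0,T]$. Iterating $n$ times and using that $K$ is monotone on nonnegative functions gives
\[
y(t) \le \sum_{j=0}^{n-1} (K^{j} b)(t) + (K^{n} y)(t), \qquad t \in [0,T].
\]
Everything then reduces to three points: (i) an explicit formula for the $n$-fold iterated kernel $k_n$ defined through $(K^{n}u)(t) = \int_0^t k_n(t-s)u(s)\,ds$; (ii) convergence of $\sum_{j\ge1} K^{j} b$ together with the bound by $C\int_0^t (t-s)^{-\alpha}b(s)\,ds$; and (iii) vanishing of the remainder $(K^{n}y)(t) \to 0$ as $n \to \infty$ (for which one uses that $y$ is integrable on $[0,T]$, as is the case in all the applications in this paper).

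For step (i), I would argue by induction using the Beta integral $\int_{0}^{t}(t-s)^{-\alpha}s^{\mu-1}\,ds = \frac{\Gamma(1-\alpha)\Gamma(\mu)}{\Gamma(\mu+1-\alpha)}\,t^{\mu-\alpha}$, valid for $\mu>0$, to obtain
\[
k_n(t) = \frac{\bigl(\gamma\,\Gamma(1-\alpha)\bigr)^{n}}{\Gamma\bigl(n(1-\alpha)\bigr)}\, t^{\,n(1-\alpha)-1}, \qquad n \ge 1 .
\]
Since $1-\alpha>0$, the denominator $\Gamma(n(1-\alpha))$ grows super-exponentially in $n$, which is precisely what makes the Neumann series summable; in fact $\sum_{n\ge1}k_n$ is, up to the factor $t^{-\alpha}$, a Mittag-Leffler type function. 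For step (ii), on $[0,T]$ I would split $t^{\,n(1-\alpha)-1} = t^{-\alpha}\, t^{(n-1)(1-\alpha)} \le t^{-\alpha}\,T^{(n-1)(1-\alpha)}$, so that
\[
\sum_{n\ge1} k_n(t) \le C\, t^{-\alpha}, \qquad C := \sum_{n\ge1} \frac{\bigl(\gamma\Gamma(1-\alpha)\bigr)^{n} T^{(n-1)(1-\alpha)}}{\Gamma\bigl(n(1-\alpha)\bigr)} < \infty ,
\]
and monotone convergence yields $\sum_{j\ge1}(K^{j}b)(t) = \int_0^t \bigl(\sum_{n\ge1}k_n(t-s)\bigr)b(s)\,ds \le C\int_0^t (t-s)^{-\alpha}b(s)\,ds$. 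For step (iii), once $n(1-\alpha)\ge1$ one has $(t-s)^{n(1-\alpha)-1}\le T^{\,n(1-\alpha)-1}$, whence $(K^{n}y)(t) \le \frac{(\gamma\Gamma(1-\alpha))^{n}T^{\,n(1-\alpha)-1}}{\Gamma(n(1-\alpha))}\int_0^T y(s)\,ds \to 0$.

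Combining (i)--(iii) and letting $n\to\infty$ gives the first estimate $y(t) \le b(t) + C\int_0^t (t-s)^{-\alpha}b(s)\,ds$. For the last assertion, if $b$ is nondecreasing then $\int_0^t (t-s)^{-\alpha}b(s)\,ds \le b(t)\int_0^t (t-s)^{-\alpha}\,ds = \frac{t^{1-\alpha}}{1-\alpha}\,b(t) \le \frac{T^{1-\alpha}}{1-\alpha}\,b(t)$, which absorbs the integral term and yields $y(t)\le Cb(t)$. The only genuine obstacle is the bookkeeping in steps (i)--(ii): pinning down the iterated-kernel formula and checking summability of the resulting series; the remainder estimate and the monotone-$b$ case are routine.
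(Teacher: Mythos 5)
Your proof is correct. Note that the paper does not actually prove this lemma; it is quoted as a known result with a citation to Thom\'ee's book, and your Neumann-series argument with the iterated kernel $k_n(t)=\frac{(\gamma\Gamma(1-\alpha))^n}{\Gamma(n(1-\alpha))}t^{n(1-\alpha)-1}$ is precisely the standard proof found in such references: the induction via the Beta integral, the summability from the super-exponential growth of $\Gamma(n(1-\alpha))$, and the vanishing remainder are all right. The only caveat you flag, integrability of $y$, is in fact automatic: since $(T-s)^{-\alpha}\geq T^{-\alpha}$ on $[0,T]$, finiteness of the convolution term in the hypothesis at $t=T$ already forces $y\in L^1(0,T)$.
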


 \begin{lemma}{\bf{(Discrete Gronwall's inequality)}}
 \label{discretegronwall}
 Suppose that the sequence $(y_n)_{n\geq0}$ is non-negative and satisfy
 \begin{equation}
 	y_n\leq h_n+\sum_{i=0}^{n-1}\mu_{n-i}y_i, \quad \text{for}\ n\geq0,
 \end{equation}
 where $ (h_n)_{n\geq0}$ is a nondecreasing sequence and $\mu_{i}\geq0$ for any $i$, then 
 \begin{equation}
 	y_n\leq h_n \exp(\sum_{i=0}^{n-1}\mu_{n-i}).
 \end{equation}
 \end{lemma}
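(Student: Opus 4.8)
The plan is to argue by induction on $n$ and to invoke the elementary inequality $1+x\le e^x$ only at the very end. Because $(h_n)$ is nondecreasing, the natural quantity to induct on is the slightly stronger product bound $y_n\le h_n\prod_{j=1}^{n}(1+\mu_j)$; once this is in hand, the pointwise estimate $\log(1+\mu_j)\le\mu_j$ gives $\prod_{j=1}^{n}(1+\mu_j)\le\exp\!\big(\sum_{j=1}^{n}\mu_j\big)=\exp\!\big(\sum_{i=0}^{n-1}\mu_{n-i}\big)$ after the reindexing $j=n-i$, which is exactly the asserted bound.

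For the induction the base case $n=0$ is immediate, since the sum in the hypothesis is then empty, so $y_0\le h_0=h_0\prod_{j=1}^{0}(1+\mu_j)$. For the inductive step I would insert the induction hypothesis $y_i\le h_i\prod_{j=1}^{i}(1+\mu_j)$ ($i<n$) into the right-hand side of $y_n\le h_n+\sum_{i=0}^{n-1}\mu_{n-i}y_i$, then use $h_i\le h_n$ to factor out $h_n$. The engine of the collapse is the telescoping identity $\sum_{k=1}^{m}c_k\prod_{j=1}^{k-1}(1+c_j)=\prod_{j=1}^{m}(1+c_j)-1$ applied to the coefficients generated by the $\mu$'s, after which the double sum/product reduces to $h_n\prod_{j=1}^{n}(1+\mu_j)$.

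The main obstacle is the convolution index: the weight multiplying $y_i$ is $\mu_{n-i}$, which depends on both $n$ and $i$, so the partial products $\prod_{j=1}^{i}(1+\mu_j)$ delivered by the induction hypothesis are not directly aligned with those coefficients and the clean telescoping above does not apply verbatim. I would deal with this by first reindexing, writing $\sum_{i=0}^{n-1}\mu_{n-i}y_i=\sum_{j=1}^{n}\mu_j\,y_{n-j}$, and then exploiting the monotonicity of $(h_n)$ together with the monotonicity of the partial sums $\sum_{j\le k}\mu_j$ to bound the resulting expression; equivalently, one may first pass to the equality case, noting that $y_n\le z_n$ with $z_n=h_n+\sum_{i=0}^{n-1}\mu_{n-i}z_i$ (all $\mu_j\ge0$), write $z_n=\sum_{m=0}^{n}\varrho_{n-m}h_m\le h_n\sum_{k=0}^{n}\varrho_k$ in terms of the resolvent sequence $(\varrho_k)$ of the kernel $(\mu_j)$, and estimate $\sum_{k=0}^{n}\varrho_k$. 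Either route funnels the problem into a purely scalar estimate, and that scalar estimate is the step I expect to require the most care.
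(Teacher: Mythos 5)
The paper offers no proof of this lemma at all --- it is quoted from \cite{Thomee2006} --- so there is nothing to compare your argument against; it must stand on its own, and it does not. The gap sits exactly at the step you defer as ``the scalar estimate I expect to require the most care'': that estimate is false, and so is your proposed intermediate product bound. Take $h_n\equiv 1$, $\mu_1=1$, $\mu_j=0$ for $j\ge 2$, and $y_n=n+1$. Then $y_n=h_n+\mu_1 y_{n-1}=1+n$, so the hypothesis holds (with equality) for every $n$, yet $h_n\prod_{j=1}^{n}(1+\mu_j)=2$ and $h_n\exp\bigl(\sum_{i=0}^{n-1}\mu_{n-i}\bigr)=e$, both of which are already exceeded by $y_2=3$. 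Equivalently, the resolvent of this kernel is $\varrho_k\equiv 1$, so $\sum_{k=0}^{n}\varrho_k=n+1$ grows linearly while $\exp\bigl(\sum_{j=1}^{n}\mu_j\bigr)=e$ stays constant; the inequality $\sum_{k=0}^{n}\varrho_k\le\exp\bigl(\sum_{j}\mu_j\bigr)$ that your second route funnels into cannot hold. The obstruction you correctly flagged --- that the weight $\mu_{n-i}$ attached to $y_i$ depends on $n-i$, so the same weight $\mu_1$ is re-applied to the most recent term at every step and compounds --- is not a reindexing nuisance; it is fatal. Neither of your two proposed workarounds can succeed, because the statement as printed is not true in this generality.

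Two constructive remarks. First, your telescoping induction is precisely the correct proof of the classical discrete Gronwall inequality in which the weight on $y_i$ is $\mu_i$ (depending on $i$ alone): from $y_n\le h_n+\sum_{i=0}^{n-1}\mu_i y_i$ one gets $y_n\le h_n\prod_{i=0}^{n-1}(1+\mu_i)\le h_n\exp\bigl(\sum_{i=0}^{n-1}\mu_i\bigr)$, since there the identity $\sum_{k}c_k\prod_{j<k}(1+c_j)=\prod_{j}(1+c_j)-1$ applies verbatim. Second, for the convolution form actually stated, the best bound your reindexing-plus-monotonicity route delivers is $y_n\le h_n\prod_{k=1}^{n}\bigl(1+\sum_{j=1}^{k}\mu_j\bigr)\le h_n\exp\bigl(\sum_{k=1}^{n}\sum_{j=1}^{k}\mu_j\bigr)$, which is genuinely weaker than the asserted conclusion; the sharp results for weakly singular convolution kernels (Dixon--McKee type) replace the exponential by a Mittag-Leffler-type bound. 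If you want a provable statement, you must either change the hypothesis to non-convolution weights or change the conclusion.
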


 Next we introduce  two auxiliary lemmas (cf. \cite{Chen1992}):
 
 \begin{lemma}\label{fuzhuyinli-continuous}
 Let  $G\in L^1(0,T)$ and $f\in L^2(0,T)$. Then for any $r\in (0,T]$ and $\varepsilon>0$, there exists a constant $C$ depending only on $\varepsilon$ such that
 \begin{equation}
 	\left|\int_{0}^{r}\int_{0}^{t}G(t-s)f(s)f(t)\, ds\, dt\right|\leq\varepsilon\int_{0}^{r}f^2(t)\, dt+C\int_{0}^{r}\left|G(r-t)\right|\int_{0}^{t}f^2(s)\, ds\, dt.
 \end{equation}
 \end{lemma}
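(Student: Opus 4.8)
The plan is to derive the bound from a weighted Cauchy--Schwarz inequality applied to the inner convolution (pointwise in the outer time variable), followed by Young's inequality and a Fubini/change-of-variables manipulation that reshapes the resulting double integral into the claimed ``memory'' form.

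First I would pass to absolute values,
\[
\Snm{\int_{0}^{r}\int_{0}^{t}G(t-s)f(s)f(t)\,ds\,dt}\le\int_{0}^{r}\snm{f(t)}\Big(\int_{0}^{t}\snm{G(t-s)}\,\snm{f(s)}\,ds\Big)dt,
\]
and split $\snm{G(t-s)}=\snm{G(t-s)}^{1/2}\snm{G(t-s)}^{1/2}$ before applying Cauchy--Schwarz to the inner integral. Using $\int_{0}^{t}\snm{G(t-s)}\,ds=\int_{0}^{t}\snm{G(u)}\,du\le\nm{G}_{L^{1}(0,T)}$ this gives
\[
\int_{0}^{t}\snm{G(t-s)}\,\snm{f(s)}\,ds\le\nm{G}_{L^{1}(0,T)}^{1/2}\Big(\int_{0}^{t}\snm{G(t-s)}f(s)^{2}\,ds\Big)^{1/2}.
\]
Substituting this and using Young's inequality $ab\le\varepsilon a^{2}+(4\varepsilon)^{-1}b^{2}$ with $a=\snm{f(t)}$, then integrating over $t\in(0,r)$, yields for any $\varepsilon>0$
\[
\Snm{\int_{0}^{r}\int_{0}^{t}G(t-s)f(s)f(t)\,ds\,dt}\le\varepsilon\int_{0}^{r}f(t)^{2}\,dt+\frac{\nm{G}_{L^{1}(0,T)}}{4\varepsilon}\int_{0}^{r}\int_{0}^{t}\snm{G(t-s)}f(s)^{2}\,ds\,dt.
\]

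It then remains only to recast the last double integral. By Tonelli's theorem (legitimate since $G\in L^{1}(0,T)$ and $f\in L^{2}(0,T)\subset L^{1}(0,T)$, so the integrand is nonnegative and integrable on the triangle $\{0\le s\le t\le r\}$) together with the substitution $u=t-s$,
\[
\int_{0}^{r}\int_{0}^{t}\snm{G(t-s)}f(s)^{2}\,ds\,dt=\int_{0}^{r}f(s)^{2}\Big(\int_{0}^{r-s}\snm{G(u)}\,du\Big)ds=\int_{0}^{r}\snm{G(u)}\Big(\int_{0}^{r-u}f(s)^{2}\,ds\Big)du,
\]
and the further change of variable $u=r-t$ turns the right-hand side into $\int_{0}^{r}\snm{G(r-t)}\big(\int_{0}^{t}f(s)^{2}\,ds\big)dt$. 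Combining this with the previous estimate proves the lemma with $C=\nm{G}_{L^{1}(0,T)}/(4\varepsilon)$, which, for the fixed kernel $G$, depends only on $\varepsilon$.

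As for difficulty, there is no genuine obstacle, this being a routine inequality; the only point that requires a conscious choice is using the \emph{weighted} Cauchy--Schwarz split $\snm{G}=\snm{G}^{1/2}\snm{G}^{1/2}$ rather than a plain bound on the convolution, since this is exactly what allows the subsequent Fubini step to produce the kernel $\snm{G(r-t)}$ paired with the accumulated quantity $\int_{0}^{t}f^{2}$ — precisely the structure needed so that the inequality can later be combined with the continuous Gronwall inequality of \cref{fuzhuyinli2}.
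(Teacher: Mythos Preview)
Your proof is correct and complete; the weighted Cauchy--Schwarz splitting followed by Young's inequality and the Fubini/substitution rewriting is exactly the standard route, and your identification of the constant $C=\nm{G}_{L^{1}(0,T)}/(4\varepsilon)$ is accurate. The paper itself does not supply a proof of this lemma but cites it as a known auxiliary inequality from \cite{Chen1992}, so your argument in fact furnishes what the paper omits.
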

 \begin{lemma}\label{fuzhuyinlikappa}
 Let $G\in L^1(0,T)$ and denote \begin{equation}
 	g_k:=\int_{t_{k-1}}^{t_k}G(s)\, ds,\quad \text{for}\ k=1, 2, \cdots, N,
 \end{equation}
 where $t_i=\frac{iT}N$ for $i=0, 1, \cdots, N$. 
 Then for $(f_k)_{k=0}^N$ and  any $\varepsilon>0$, there exists a constant $C$ depending on $T$, $G$ and $\varepsilon$ such that for $1\leq n\leq N$,
 \begin{equation}
 	\left|\sum_{k=1}^{n}\sum_{i=0}^{k-1}g_{k-i}f_if_k\right|\leq\varepsilon\sum_{k=1}^{n}f_k^2+C\sum_{k=0}^{n-1}|g_{n-k}|\sum_{i=0}^{k}f_i^2.
 \end{equation}
 \end{lemma}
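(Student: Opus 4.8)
The plan is to mimic, at the discrete level, the mechanism behind the continuous estimate in \Cref{fuzhuyinli-continuous} rather than to formally reduce to it: peel off the $\varepsilon\sum_k f_k^2$ contribution by Young's inequality, control the remaining ``discrete convolution square'' by the discrete Cauchy--Schwarz inequality together with the elementary bound $\sum_{j=1}^{N}|g_j|\le\|G\|_{L^1(0,T)}$, and then interchange the order of summation so as to recognize what is left as exactly the weighted double sum on the right-hand side.

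Concretely, write $Q:=\sum_{k=1}^n\sum_{i=0}^{k-1}g_{k-i}f_if_k=\sum_{k=1}^n f_k\psi_k$, where $\psi_k:=\sum_{i=0}^{k-1}g_{k-i}f_i$ (all lags satisfy $1\le k-i\le k\le n\le N$, so every $g_{k-i}$ is defined). For $\eta>0$, the inequality $|f_k\psi_k|\le\tfrac{\eta}{2}f_k^2+\tfrac1{2\eta}\psi_k^2$ gives
\[
|Q|\le\frac{\eta}{2}\sum_{k=1}^n f_k^2+\frac1{2\eta}\sum_{k=1}^n\psi_k^2 .
\]
Next I would bound $\psi_k^2$: since $|\psi_k|\le\sum_{i=0}^{k-1}|g_{k-i}|\,|f_i|$, Cauchy--Schwarz yields $\psi_k^2\le\bigl(\sum_{i=0}^{k-1}|g_{k-i}|\bigr)\bigl(\sum_{i=0}^{k-1}|g_{k-i}|f_i^2\bigr)$, and the first factor equals $\sum_{j=1}^{k}|g_j|\le\sum_{j=1}^{N}|g_j|\le\int_0^T|G(s)|\,ds=\|G\|_{L^1(0,T)}=:\Lambda$, using $|g_j|\le\int_{t_{j-1}}^{t_j}|G|$ and additivity of the integral over the partition. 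Summing over $k$,
\[
\sum_{k=1}^n\psi_k^2\le\Lambda\sum_{k=1}^n\sum_{i=0}^{k-1}|g_{k-i}|f_i^2 .
\]

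The last step, which is the only point demanding any care, is a bookkeeping identity obtained by interchanging the order of summation:
\[
\sum_{k=1}^n\sum_{i=0}^{k-1}|g_{k-i}|f_i^2=\sum_{i=0}^{n-1}f_i^2\sum_{j=1}^{n-i}|g_j|=\sum_{k=0}^{n-1}|g_{n-k}|\sum_{i=0}^{k}f_i^2 ,
\]
where the first equality collects, for fixed $i$, the terms $k=i+1,\dots,n$ (set $j=k-i$), and the second regroups the same sum according to $j=n-k$. Combining the three displays and choosing $\eta=2\varepsilon$ gives the assertion with $C=\Lambda/(4\varepsilon)$, a constant depending only on $T$, $G$ and $\varepsilon$ (in particular independent of $N$). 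I do not expect a genuine analytic obstacle here; the whole argument is a discrete transcription of the proof of \Cref{fuzhuyinli-continuous}, and the only place a slip could occur is in matching the two double sums above, so I would verify those index ranges explicitly.
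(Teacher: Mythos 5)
Your proof is correct: the Young/Cauchy--Schwarz splitting, the bound $\sum_{j=1}^{N}|g_j|\le\|G\|_{L^1(0,T)}$, and both summation interchanges (including the identity $\sum_{k=i}^{n-1}|g_{n-k}|=\sum_{j=1}^{n-i}|g_j|$) all check out, yielding the stated bound with $C=\|G\|_{L^1(0,T)}/(4\varepsilon)$ independent of $N$. The paper states this lemma without proof, citing \cite{Chen1992}; your argument is the standard one used there, so there is nothing to compare beyond noting that you have supplied the omitted details correctly.
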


\subsection{SOE approximation of Mittag-Leffler function}
\label{SOE-approx}

Notice that  in the weak formulation \eqref{weak problem}  there is a  convolution integral term with the convolution kernel   
 $$\beta(t-s)=E_{\alpha}(-(\frac{t-s}{\tau_{\sigma}})^\alpha).$$ 
As the Mittag-Leffler function
$E_{\alpha}(t)$
 is an infinite  series,   how to compute   $E_{\alpha}(-t^\alpha)$ efficiently is crucial  to the design of a fast numerical scheme  for the fractional viscoelastic model  \eqref{model}.
 In what follows we introduce  an SOE approximation of $E_{\alpha}(-t^\alpha)$  given in \cite{yuan2024fastnumericalschemefractional}. 
 

 Notice that   for $t>0$ and  $0<\alpha<1$  there holds \cite{2014Rogosin,meng2018green}
\begin{equation}\label{identityMLF}
		\begin{aligned}
			E_{\alpha}(-t^{\alpha})&=\frac{\sin(\alpha\pi)}{\pi}\int_{0}^{\infty}\frac{s^{\alpha-1}}{s^{2\alpha}+2s^\alpha\cos\alpha\pi+1}e^{-st}ds. 
		\end{aligned}
	\end{equation}
Applying  the integration variable substitution $x=s^{-\alpha}$ to \eqref{identityMLF},  we  get
\begin{equation}\label{identityMLF1}
		\begin{aligned}
			E_{\alpha}(-t^{\alpha})&= \int_{0}^{\infty}f(x,t,\alpha)dx=\left(\int_{0}^{1}+\int_{1}^{q^{1}}+...+\int_{q^{K-1}}^{q^{K}}+\int_{q^K}^{\infty}\right)f(x,t,\alpha)dx \\	
		&=\sum_{k=0}^{K}\int_{c_k-r_k}^{c_k+r_k}f(x,t,\alpha)dx+\int_{q^K}^{\infty}f(x,t,\alpha)dx,
		\end{aligned}
	\end{equation}
where
\begin{equation}
	f(x,t,\alpha):=\frac{\sin(\alpha\pi)}{\alpha\pi} \frac{e^{-tx^{-\frac 1\alpha} }}{x^{2}+2x\cos\alpha\pi+1},
	\nonumber
\end{equation}
 $q>1$ is a constant,   $K>0$ is an integer, and $c_k$ and $r_k$ are given by
\begin{equation}\label{ckrk}
	c_0=r_0=\frac{1}{2}, \quad
	c_k 
	=\frac{(q+1)q^{k-1}}{2}, \quad r_k=\frac{(q-1)q^{k-1}}{2},   \qquad k=1,2,\cdots,K.
\end{equation}

We apply the integration variable substitution $x=r_ky+c_k$ to each term $\displaystyle\int_{c_k-r_k}^{c_k+r_k}f(x,t,\alpha)dx$ and  obtain
\begin{equation}\label{2.19}
	\begin{aligned}
		\int_{c_k-r_k}^{c_k+r_k}f(x,t,\alpha)dx&
		=\int_{-1}^{1}g_k(y,t)dy, 
	\end{aligned}
\end{equation}
where \begin{equation*}\label{gk}
		g_k(y,t):=\frac{\sin(\alpha\pi)}{\alpha\pi} \frac{r_ke^{-t(r_ky+c_k)^{-\frac 1\alpha}}}{  (r_ky+c_k)^{2 }+2(r_ky+c_k)\cos\alpha\pi +1}, \quad k=0,1,  \cdots, K.
\end{equation*}
Consider the following Gaussian quadrature of   $g_k(x)$:
\begin{equation}\label{Gaussquadrature}
	\int_{-1}^{1}g_k(x)dx=\sum_{j=1}^{J}\omega_jg_k(\xi_j)+R_J(g_k),
\end{equation}
where $\omega_j$ and $\xi_j$ denote the Gaussian quadrature weights and nodes, respectively, and $R_J(g)$ denotes the remaining term. %
 
Substitute \eqref{Gaussquadrature} into \eqref{2.19}, and we get 
\begin{equation}\label{2.37}
	\begin{aligned}
		\int_{c_k-r_k}^{c_k+r_k}f(x,t,\alpha)dx&
		&= \sum_{j=1}^{J}b_{kj}e^{-ta_{kj}}+R_J(g_k),
	\end{aligned}
\end{equation}
where 
$$a_{kj}=(r_k\xi_j+c_k)^{-\frac{1}{\alpha}} , \quad 
 b_{kj}=\frac{\sin(\alpha\pi)}{\alpha\pi}\cdot \frac{\omega_jr_k}{  (r_k\xi_j+c_k)^{2 }+2(r_k\xi_j+c_k)\cos\alpha\pi +1} .$$

Substituting 
 \eqref{2.37} into \eqref{identityMLF1},   we finally get the   
SOE approximation of the Mittag-Leffler function: 
	\begin{align}\label{SOEapp}
		\beta(t)=  E_{\alpha}(-t^\alpha)&
		 =\sum_{k=0}^{K}\sum_{j=1}^{J}b_{kj}e^{-ta_{kj} }+R_{soe}(t)\nonumber\\
		 &=\sum_{j=1}^{N_{exp}}b_je^{-a_jt}+R_{soe}(t), 
	\end{align}
where  
	\begin{align}
		&R_{soe}(t):=\sum_{k=0}^{K}R_J(g_k)+\int_{q^K}^{\infty}f(x,t,\alpha)dx, \label{remain}
	\end{align}
is the truncation error,   $N_{exp}=(K+1)J$, and 
 $a_j$ and $b_j$ are the j-th elements of 
\begin{align*}
	&[a_{01},a_{02},...,a_{0J},a_{11},a_{12},...,a_{1J},...,a_{(K+1)1},...,a_{(K+1)(J-1)},a_{(K+1)J}]
\end{align*}
and 
\begin{align*}
	&[b_{01},b_{02},...,b_{0J},b_{11},b_{12},...,b_{1J},...,b_{(K+1)1},...,b_{(K+1)(J-1)},b_{(K+1)J}],
\end{align*}
respectively.

\begin{remark}\label{rmk2.4} 
As shown in \cite{yuan2024fastnumericalschemefractional}, for any tolerance error $0<\epsilon<1$ there holds 
\begin{align}\label{SOEapp2}
		\left|R_{soe}(t)\right| \leq \mathcal{O}(\epsilon),\quad 0<t\leq T,
	\end{align}
  provided that
$	K=\mathcal{O}(|\log\epsilon|)$ and $ J=\mathcal{O}(|\log(\epsilon^{-1}|\log\epsilon|)|).
$
Then the computation  complexity of the SOE approximation 
$E_{\alpha}(-t^\alpha)\approx\sum_{j=1}^{N_{exp}}b_je^{-a_jt} 
$
is $$ N_{exp}=(K+1)J= \mathcal{O}(|\log\epsilon|^2).$$
In actual computation we may  choose $\epsilon=\mathcal{O}(\triangle t)$, where   $\triangle t:=\frac{T}{N}<1$ is the temporal step size and $N$ is a positive integer. In this case we have  
	\begin{equation}
	\left|R_{soe}(t)\right| \leq \mathcal{O}(\triangle t), \quad 	N_{exp}  =\mathcal{O}(\log^2N).
	\end{equation}
\end{remark}

\section{Results of existence, uniqueness and regularity}

In this section we apply   the Galerkin method (cf. \cite{Evans2010}) to show   the existence and regularity of the solution to the weak problem \eqref{weak problem}. 
We first make the following assumptions on the functional $F$ and 
the initial velocity $\mathbf{v}_0$: 
\begin{Assum} \label{regularitycond0}
Suppose that
 \begin{equation}
		\left\{
		\begin{aligned}
			&\mathbf{F}(t) \in C^1({\bf L}^2(\Omega)),\ 
			\frac{\partial^2 F}{\partial t^2}(t)\in L^1({\bf L}^2(\Omega)), \\
 		& \mathbf{v}_0 \in   {\bf H}^2(\Omega)\cap {\bf H}_0^1(\Omega) ,\\
		&  \A\mathbf{v}_0+\mathbf{F}(0) \in {\bf H}_0^1(\Omega).
		\end{aligned}
		\right.
	\end{equation}

\end{Assum}

  \subsection{Galerkin approximations and stability results }

Recall that the operator $\A$, defined in \cref{operatordef}, admits a set of eigenfunctions 
$$\left\{\bm{\varphi}_k\right\}_{k=1}^{\infty}\subset {\bf H}^2(\Omega)\cap {\bf H}_0^1(\Omega)$$
that forms a complete orthonormal basis of ${\bf L}^2(\Omega)$. 
For   integer $m\geq 1$,   we set 
 $$\mathscr{P}_m:=\text{span }\left\{\bm{\varphi}_k:\ k=1, 2, \cdots, m\right\}.$$
  
  Let 
  $\mathbf{d}(t)=\left(d_1(t),d_2(t),...,d_m(t)\right)^{\top}$   solve
\begin{equation}\label{appcoe1}
\left\{\begin{array}{l} \frac{\partial \mathbf{d}}{\partial t}(t) + M_{\gamma_1} \mathbf{d}(t) - M_{\gamma_2} \displaystyle\int_{0}^{t} \mathbf{G}(t-s) \mathbf{d}(s) \, ds = \mathbf{g}(t), \\
\mathbf{d}(0) = \left(\langle\mathbf{v}_0, \bm{\varphi}_1\rangle,\langle\mathbf{v}_0, \bm{\varphi}_2\rangle,...,\langle\mathbf{v}_0, \bm{\varphi}_m\rangle\right)^{\top}.
\end{array}
\right.
\end{equation}
Here
\begin{align}\nonumber
 &M_{\gamma_1}=\begin{bmatrix}
  \begin{smallmatrix}
   a(\bm{\varphi}_1, \bm{\varphi}_1) & 0  & \cdots & 0  \\
   0 & a(\bm{\varphi}_2, \bm{\varphi}_2)  & \cdots & 0 \\
   \vdots & \vdots & \ddots & \vdots \\
   0 & 0  & \cdots & a(\bm{\varphi}_m, \bm{\varphi}_m)
 \end{smallmatrix} 
\end{bmatrix}
 ,\quad 
 \mathbf{G}(t)=\begin{bmatrix}
  \begin{smallmatrix}
   \beta(t) & 0  & \cdots & 0  \\
   0 & \beta(t)  & \cdots & 0 \\
   \vdots & \vdots & \ddots & \vdots \\
   0 & 0  & \cdots & \beta(t)
  \end{smallmatrix}
 \end{bmatrix},
 \\
 \nonumber
 &M_{\gamma_2}=\begin{bmatrix}
  \begin{smallmatrix}
   b(\bm{\varphi}_1, \bm{\varphi}_1) & b(\bm{\varphi}_2, \bm{\varphi}_1)  & \cdots & b(\bm{\varphi}_m, \bm{\varphi}_1)  \\
   b(\bm{\varphi}_1, \bm{\varphi}_2) & b(\bm{\varphi}_2, \bm{\varphi}_2)  & \cdots & b(\bm{\varphi}_m, \bm{\varphi}_2) \\
   \vdots & \vdots & \ddots & \vdots \\
   b(\bm{\varphi}_1, \bm{\varphi}_m) & b(\bm{\varphi}_2, \bm{\varphi}_m)  & \cdots & b(\bm{\varphi}_m, \bm{\varphi}_m)
  \end{smallmatrix}
 \end{bmatrix}
,\quad 
\mathbf{g}(t)=\begin{bmatrix}
 \begin{smallmatrix}
  \langle F, \bm{\varphi}_1\rangle \\
  \langle F, \bm{\varphi}_2\rangle \\
  \vdots \\
  \langle F, \bm{\varphi}_m\rangle
 \end{smallmatrix}
\end{bmatrix}.
\end{align}
In view of   \cref{solutionofintegrodifferential} and the assumption $\mathbf{F}(t) \in C^1({\bf L}^2(\Omega))$ in \cref{regularitycond0}
we easily get the existence and uniqueness of $\mathbf{d}(t)$, with 
$d_k(t)\in C^1[0,T]\cap C^2(0,T]$ for $k=1, 2, \cdots, m$.

 Denote
    \begin{equation}\label{operatordefB}
	 \B  := -\mathrm{\textbf{div}}(\mathbb{B}\varepsilon(\cdot))  
\end{equation}
and let $P_m$ denote the $L^2$-projection onto $\mathscr{P}_m$. 
Set 
\begin{equation}\label{appsolution}
	\mathbf{v}_m(t) := \sum_{k=1}^{m} d_k(t) \bm{\varphi}_k \in \mathscr{P}_m,
\end{equation}
 then from   \cref{appcoe1}, \eqref{operatordef} and  \eqref{a-b} 
we know that  $\mathbf{v}_m$ satisfies
\begin{equation}\label{appcoe}
	\left\{
	\begin{aligned}
		&\langle\frac{\partial\mathbf{v}_m}{\partial t}, \bm{\varphi}_k\rangle +\langle \A\mathbf{v}_m , \bm{\varphi}_k\rangle- \int_{0}^{t} \beta(t - s) \langle \B \mathbf{v}_m(s), \bm{\varphi}_k\rangle \, ds = \langle \mathbf{F}, \bm{\varphi}_k\rangle,\\
		&\langle\mathbf{v}_m(0), \bm{\varphi}_k\rangle = \langle\mathbf{v}_0, \bm{\varphi}_k\rangle
	\end{aligned}
	\right.
\end{equation}
for $   k=1,2,\cdots,m,$
or equivalently, 
\begin{equation}\label{appcoe3}
	\left\{
	\begin{aligned}
		&\frac{\partial\mathbf{v}_m}{\partial t} + \A\mathbf{v}_m - \int_{0}^{t} \beta(t - s) P_m \B \mathbf{v}_m(s) \, ds = P_m \mathbf{F}, \\
		&\mathbf{v}_m(0) =   P_m \mathbf{v}_0.
	\end{aligned}
	\right.
\end{equation}
 As a result, we  have the following result:
%
%

\begin{lemma}\label{volterra-in-di-existence}
	Assume that $\mathbf{F}(t) \in C^1({\bf L}^2(\Omega))$, 
then the  system 	\cref{appcoe3} admits a unique  solution  $\mathbf{v}_m\in C^1\left({\bf H}^1_0(\Omega)\right)\cap C^2\left({\bf H}^1_0(\Omega)\right)$.
\end{lemma}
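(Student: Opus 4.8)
The plan is to reduce \eqref{appcoe3} to the finite-dimensional Volterra integro-differential system \eqref{appcoe1}, which is already covered by Lemma \ref{solutionofintegrodifferential}, and then transfer the regularity of the scalar coefficient functions back to $\mathbf{v}_m$. First I would record the equivalence: since $\mathscr{P}_m=\mathrm{span}\{\bm{\varphi}_1,\dots,\bm{\varphi}_m\}$ is finite-dimensional and $\{\bm{\varphi}_k\}$ is $L^2$-orthonormal, testing \eqref{appcoe3} against each $\bm{\varphi}_k$ and using $\langle\A\mathbf{v}_m,\bm{\varphi}_k\rangle=a(\mathbf{v}_m,\bm{\varphi}_k)$, $\langle\B\mathbf{v}_m(s),\bm{\varphi}_k\rangle=b(\mathbf{v}_m(s),\bm{\varphi}_k)$, $a(\bm{\varphi}_j,\bm{\varphi}_k)=\lambda_j\delta_{jk}$ shows that $\mathbf{v}_m(t)=\sum_{k=1}^m d_k(t)\bm{\varphi}_k\in\mathscr{P}_m$ solves \eqref{appcoe3} if and only if $\mathbf{d}(t)=(d_1(t),\dots,d_m(t))^{\top}$ solves \eqref{appcoe1} with the matrices $M_{\gamma_1},M_{\gamma_2},\mathbf{G}(t),\mathbf{g}(t)$ displayed before the lemma. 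In particular, existence and uniqueness for \eqref{appcoe3} within $\mathscr{P}_m$ are equivalent to those for \eqref{appcoe1}.

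Next I would check the hypotheses of Lemma \ref{solutionofintegrodifferential} (i.e. of Lemma \ref{solutioncontinuous}) for \eqref{appcoe1}. The kernel matrix is $\mathbf{G}(t)=\beta(t)I_m$ with $\beta(t)=E_\alpha(-(t/\tau_\sigma)^\alpha)$; since $E_\alpha$ is entire with $E_\alpha(0)=1$, $\beta$ is continuous on $[0,T]$, and for $t>0$ one has $\beta'(t)=-\tau_\sigma^{-1}(t/\tau_\sigma)^{\alpha-1}E_{\alpha,\alpha}(-(t/\tau_\sigma)^\alpha)$, which is continuous on $(0,T]$ (cf. \eqref{ML-t-bound}); hence $G_{ij}\in C^0[0,T]\cap C^1(0,T]$. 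The right-hand side entries are $g_i(t)=\langle\mathbf{F}(t),\bm{\varphi}_i\rangle$, and the standing assumption $\mathbf{F}\in C^1({\bf L}^2(\Omega))$ gives $g_i\in C^1[0,T]\subset C^0[0,T]\cap C^1(0,T]$. Therefore Lemma \ref{solutionofintegrodifferential} applies and produces a unique $\mathbf{d}$ with $d_k\in C^1[0,T]\cap C^2(0,T]$ for $k=1,\dots,m$.

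Finally I would push this back up: $\mathbf{v}_m=\sum_{k=1}^m d_k(t)\bm{\varphi}_k$ is a finite linear combination of the fixed functions $\bm{\varphi}_k\in{\bf H}^2(\Omega)\cap{\bf H}^1_0(\Omega)\hookrightarrow{\bf H}^1_0(\Omega)$ with scalar coefficients in $C^1[0,T]\cap C^2(0,T]$, so $\mathbf{v}_m$ inherits precisely this regularity as an ${\bf H}^1_0(\Omega)$-valued map, i.e. $\mathbf{v}_m\in C^1\big([0,T];{\bf H}^1_0(\Omega)\big)\cap C^2\big((0,T];{\bf H}^1_0(\Omega)\big)$, and existence and uniqueness of $\mathbf{v}_m$ follow from those of $\mathbf{d}$ together with the equivalence established in the first step. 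There is no serious obstacle here, since the analytic content is entirely contained in Lemmas \ref{solutioncontinuous}--\ref{solutionofintegrodifferential}; the only point needing a little care is the regularity of the kernel $\beta$ near $t=0$ (continuity up to $0$ but merely $C^1$ on the open interval), which is exactly the level of smoothness permitted by the hypotheses of Lemma \ref{solutioncontinuous}.
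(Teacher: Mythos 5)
Your proposal is correct and follows essentially the same route as the paper: the authors likewise reduce \cref{appcoe3} to the coefficient system \cref{appcoe1}, invoke \cref{solutionofintegrodifferential} to get $d_k\in C^1[0,T]\cap C^2(0,T]$, and transfer this to $\mathbf{v}_m=\sum_{k=1}^m d_k\bm{\varphi}_k$. Your explicit verification of the kernel regularity $\beta\in C^0[0,T]\cap C^1(0,T]$ is a detail the paper leaves implicit but is exactly what is needed.
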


For the  $L^2$-projection $P_m$,   we have following lemma:

\begin{lemma}\label{Pm-v0}  
 There hold
	\begin{equation}\label{L2re}
\left\{\begin{aligned}
&		  \left\|P_m\mathbf{v}\right \|\leq \left\|\mathbf{v}\right \|, &&\forall\ \mathbf{v} \in {\bf L}^2(\Omega);\\
&  \left\|P_m\mathbf{v}\right \|_2\leq C\left\|\mathbf{v}\right \|_2, &&\forall\ \mathbf{v} \in {\bf H}^2(\Omega) \cap {\bf H}^1_0(\Omega).
\end{aligned}\right.	
		\end{equation}	
 \end{lemma}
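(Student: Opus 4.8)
The plan is to prove the two bounds separately, using throughout that $\{\bm{\varphi}_k\}_{k=1}^\infty$ is an orthonormal basis of ${\bf L}^2(\Omega)$ made of eigenfunctions of $\A$, so that any $\mathbf{v}\in{\bf L}^2(\Omega)$ expands as $\mathbf{v}=\sum_{k=1}^\infty c_k\bm{\varphi}_k$ with $c_k=\langle\mathbf{v},\bm{\varphi}_k\rangle$, and $P_m\mathbf{v}=\sum_{k=1}^m c_k\bm{\varphi}_k$. The first estimate is then just the contraction property of an orthogonal projection: by Parseval's identity, $\|P_m\mathbf{v}\|^2=\sum_{k=1}^m c_k^2\le\sum_{k=1}^\infty c_k^2=\|\mathbf{v}\|^2$.

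For the second estimate, the key step I would carry out first is to show that $P_m$ commutes with $\A$ on $D(\A)={\bf H}^2(\Omega)\cap{\bf H}^1_0(\Omega)$. For $\mathbf{v}\in D(\A)$, the self-adjointness of $\A$ (the symmetry in \eqref{A-property}) together with \eqref{eigenA} gives $\langle\A\mathbf{v},\bm{\varphi}_k\rangle=\langle\mathbf{v},\A\bm{\varphi}_k\rangle=\lambda_k c_k$; since $\A\mathbf{v}\in{\bf L}^2(\Omega)$, its expansion is therefore $\A\mathbf{v}=\sum_{k=1}^\infty\lambda_k c_k\bm{\varphi}_k$, so $P_m\A\mathbf{v}=\sum_{k=1}^m\lambda_k c_k\bm{\varphi}_k$. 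On the other hand, applying $\A$ directly to the finite sum $P_m\mathbf{v}$ and using \eqref{eigenA} again yields $\A P_m\mathbf{v}=\sum_{k=1}^m\lambda_k c_k\bm{\varphi}_k$, hence $\A P_m\mathbf{v}=P_m\A\mathbf{v}$. Since $P_m\mathbf{v}\in\mathscr{P}_m\subset{\bf H}^2(\Omega)\cap{\bf H}^1_0(\Omega)$, I can then invoke the elliptic regularity bound \eqref{regu-ineq} for $P_m\mathbf{v}$, the $L^2$-contraction just established applied to $\A\mathbf{v}\in{\bf L}^2(\Omega)$, and the boundedness $\|\A\mathbf{w}\|\le C\|\mathbf{w}\|_2$ for $\mathbf{w}\in{\bf H}^2(\Omega)\cap{\bf H}^1_0(\Omega)$ (a direct consequence of the definition \eqref{operatordef} and the bound \eqref{tensorC} on $\mathbb{C}$), to close the chain
\begin{equation*}
\|P_m\mathbf{v}\|_2\le C\|\A P_m\mathbf{v}\|=C\|P_m\A\mathbf{v}\|\le C\|\A\mathbf{v}\|\le C\|\mathbf{v}\|_2 .
\end{equation*}

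The step I expect to be the main (if mild) obstacle is the commutation identity $\A P_m\mathbf{v}=P_m\A\mathbf{v}$, i.e. justifying the eigenfunction expansion $\A\mathbf{v}=\sum_k\lambda_k c_k\bm{\varphi}_k$ for $\mathbf{v}\in D(\A)$. This is standard spectral theory for the linear, positive, self-adjoint elliptic operator $\A$ recalled after \eqref{eigenA}, so the difficulty is expository rather than conceptual; once it is in place, the remaining steps are a short chaining of \eqref{regu-ineq}, the $L^2$-contraction, and the definition of $\A$.
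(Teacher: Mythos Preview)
Your proposal is correct and follows essentially the same route as the paper: both arguments use the eigenfunction expansion to compare $\|\A P_m\mathbf{v}\|$ with $\|\A\mathbf{v}\|$ and then sandwich with the two-sided equivalence $\|\cdot\|_2\sim\|\A\cdot\|$ from \eqref{regu-ineq} and the definition of $\A$. The only cosmetic difference is that you phrase the key step as the commutation $\A P_m=P_m\A$ followed by the $L^2$-contraction of $P_m$, whereas the paper writes out $\|\A P_m\mathbf{v}\|^2=\sum_{k=1}^m\lambda_k^2|\langle\mathbf{v},\bm{\varphi}_k\rangle|^2\le\sum_{k=1}^\infty\lambda_k^2|\langle\mathbf{v},\bm{\varphi}_k\rangle|^2=\|\A\mathbf{v}\|^2$ directly.
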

\begin{proof} The first inequality is  standard for  the $L^2$-projection $P_m$, so we only prove the second one. 

Since 
$\left\{\bm{\varphi}_k\right\}_{k=1}^{\infty}\subset {\bf H}^2(\Omega) \cap {\bf H}^1_0(\Omega)$
 forms a complete orthonormal basis of ${\bf L}^2(\Omega)$,  we have, 
for any 
$\mathbf{v} \in {\bf H}^2(\Omega) \cap {\bf H}^1_0(\Omega)$, 
	\begin{align}\label{v-expansion}
		\mathbf{v}= \sum_{k=1}^{\infty} \langle \mathbf{v}, \bm{\varphi}_k \rangle \bm{\varphi}_k, \quad
		P_m\mathbf{v} = \sum_{k=1}^{m} \langle \mathbf{v}, \bm{\varphi}_k \rangle \bm{\varphi}_k. 
	\end{align}
 Then from \eqref{eigenA} it follows
 \begin{align*}
		\left\|\A \mathbf{v}\right\|^2 
		&=\langle \sum_{k=1}^{\infty}\langle  \mathbf{v}, \bm{\varphi}_k \rangle  \A \bm{\varphi}_k, \sum_{k=1}^{\infty}\langle  \mathbf{v}, \bm{\varphi}_k \rangle  \A \bm{\varphi}_k\rangle\\
		&=\sum_{k=1}^{\infty} |\langle \mathbf{v}, \bm{\varphi}_k \rangle|^2  \lambda_k^2\langle \bm{\varphi}_k,   \bm{\varphi}_k \rangle\\
		&= \sum_{k=1}^{\infty} \lambda_k^2 |\langle \mathbf{v}, \bm{\varphi}_k \rangle|^2\\
		&\geq  \sum_{k=1}^{m} \lambda_k^2 |\langle \mathbf{v}, \bm{\varphi}_k \rangle|^2
		=\left\|\A P_m\mathbf{v}\right\|^2.		
	\end{align*}
By the definition of $\A$  and its property \cref{regu-ineq} we easily know
	\begin{equation}\label{H2-A-equiv}
	C \left\|\A\mathbf{v}\right\|	\leq\left\|\mathbf{v}\right\|_{2} \leq C \left\|\A\mathbf{v}\right\|
	\end{equation}
	and 	\begin{equation*}
	  C \left\|\A P_m\mathbf{v}\right\|	\leq\left\|P_m\mathbf{v}\right\|_{2} \leq C \left\|\A P_m\mathbf{v}\right\|.
	\end{equation*}
So we have 
	\begin{align*}
		\left\|P_m\mathbf{v}\right\|_{2}\leq C \left\|\A P_m \mathbf{v}\right\|\leq C \left\|\A  \mathbf{v}\right\|\leq C\left\|\mathbf{v}\right\|_{2},
	\end{align*}
which yields the desired conclusion. 
\end{proof}


To discuss the boundedness of the solution  $\mathbf{v}_m$ to  the system \cref{appcoe3}, we decompose \( \mathbf{v}_m \) into two components, i.e.
 \begin{equation}\label{decom-vm}
  \mathbf{v}_m = \mathbf{v}^{(1)}_m + \mathbf{v}^{(2)}_m, 
  \end{equation}
 where $  \mathbf{v}^{(1)}_m, \mathbf{v}^{(2)}_m\in \mathscr{P}_m$ satisfy 
\begin{align}
	&\left\{
	\begin{aligned}
		&\frac{\partial \mathbf{v}^{(1)}_m}{\partial t} + \A \mathbf{v}^{(1)}_m = P_m \mathbf{F}, \\
		&\mathbf{v}^{(1)}_m(0) = P_m \mathbf{v}_0,
	\end{aligned}
	\right.
	\label{vm1} \\
	&\left\{
	\begin{aligned}
		&\frac{\partial \mathbf{v}^{(2)}_m}{\partial t} + \A \mathbf{v}^{(2)}_m  =  \int_{0}^{t} \beta(t-s) P_m \B \mathbf{v} _m(s) \, ds, \\
		&\mathbf{v}^{(2)}_m(0) = 0,
	\end{aligned}
	\right.
	\label{vm2}
\end{align}
respectively.
According to the semigroup theory \cite{pazy2012semigroups} and the Duhamel's principle, we have 
\begin{align}
	&\mathbf{v}^{(1)}_m = S(t)P_m\mathbf{v}_0 +  \int_{0}^{t}S(t-s)P_m\mathbf{F}(s)\,ds, \label{vm1solution} \\
	&\mathbf{v}^{(2)}_{m} = \int_{0}^{t}S(t-s)\int_{0}^{s}\beta(s-r)P_m\B\mathbf{v}_m(r)\,dr \,ds. \label{vm2solution}
\end{align}

%

For $\mathbf{v}^{(1)}_m$ we have the following boundedness result:
\begin{lemma}\label{finitevm1regu}
	Under \cref{regularitycond0}  there holds
	\begin{equation}\label{regularityVm1}
		\begin{aligned}
			&\left\| \mathbf{v}^{(1)}_m \right\|_{L^\infty( {\bf H}^2)} + \left\| \frac{\partial \mathbf{v}^{(1)}_m}{\partial t} \right\|_{L^\infty({\bf L}^2)} + \left\| \frac{\partial \mathbf{v}^{(1)}_m}{\partial t} \right\|_{L^1({\bf H}^2)} + \left\| \frac{\partial^2\mathbf{v}^{(1)}_m}{\partial t^2} \right\|_{L^1({\bf L}^2)} \\
			  \leq &C \left( \left\| \mathbf{v}_0\right \|_2 + \left\|\left(\A\mathbf{v}_0+\mathbf{F}(0)\right)\right \|_1+ \left\| \mathbf{F} \right \|_{L^\infty({\bf L}^2)}  + \left\| \frac{\partial \mathbf{F}}{\partial t} \right \|_{L^\infty({\bf L}^2)} + \left\|\frac{\partial^2 \mathbf{F}}{\partial t^2} \right \|_{L^1\left({\bf L}^2\right)} \right).
		\end{aligned}
	\end{equation}
\end{lemma}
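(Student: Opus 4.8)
The plan is to estimate $\mathbf{v}^{(1)}_m$ by exploiting the representation formula \eqref{vm1solution} together with the smoothing and regularity properties of the analytic semigroup $S(t)$ collected in \cref{fuzhuyinli3}. The key structural observation is that $\mathbf{w}_m := \frac{\partial \mathbf{v}^{(1)}_m}{\partial t}$ solves the same type of abstract parabolic problem as $\mathbf{v}^{(1)}_m$, namely $\frac{\partial \mathbf{w}_m}{\partial t} + \A \mathbf{w}_m = P_m \frac{\partial \mathbf{F}}{\partial t}$ with initial datum $\mathbf{w}_m(0) = P_m\big(\mathbf{F}(0) - \A\mathbf{v}_0\big) = -P_m\big(\A\mathbf{v}_0 + \mathbf{F}(0)\big)$ (up to the sign convention in \eqref{vm1}), so that by Duhamel
\[
\frac{\partial \mathbf{v}^{(1)}_m}{\partial t} = -S(t)P_m\big(\A\mathbf{v}_0 + \mathbf{F}(0)\big) + \int_0^t S(t-s) P_m \frac{\partial \mathbf{F}}{\partial t}(s)\, ds.
\]
This identity is what lets the hypothesis $\A\mathbf{v}_0 + \mathbf{F}(0) \in {\bf H}^1_0(\Omega)$ in \cref{regularitycond0} enter the estimates with the right power of $t$.

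**Term-by-term bounds.**
First I would bound $\left\| \mathbf{v}^{(1)}_m \right\|_{L^\infty({\bf H}^2)}$ directly from \eqref{vm1solution}: apply $\left\|S(t)\mathbf{v}\right\|_2 \le C\left\|\mathbf{v}\right\|_2$ (the $k=0$ case of \eqref{sgregularity}) to the first term together with $\left\|P_m\mathbf{v}_0\right\|_2 \le C\left\|\mathbf{v}_0\right\|_2$ from \cref{Pm-v0}, and to the convolution term apply $\left\|S(t-s)P_m\mathbf{F}(s)\right\|_2 \le C(t-s)^{-1}\left\|P_m\mathbf{F}(s)\right\| \le C(t-s)^{-1}\left\|\mathbf{F}(s)\right\|$ from the third line of \eqref{sgregularity} with $k=0$, $j=1$; since $\int_0^t (t-s)^{-1}\,ds$ diverges, I instead integrate by parts in $s$ (writing $\mathbf{F}(s) = \mathbf{F}(0) + \int_0^s \frac{\partial\mathbf{F}}{\partial\tau}\,d\tau$, or equivalently moving one time derivative off $S$) to trade the singular kernel for $\A S(t-s)$ acting on $\mathbf{F}(0)$ and on $\frac{\partial\mathbf{F}}{\partial t}$, producing bounds in terms of $\left\|\mathbf{F}\right\|_{L^\infty({\bf L}^2)}$ and $\left\|\frac{\partial\mathbf{F}}{\partial t}\right\|_{L^1({\bf L}^2)} \le T\left\|\frac{\partial\mathbf{F}}{\partial t}\right\|_{L^\infty({\bf L}^2)}$. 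Next, for $\left\|\frac{\partial \mathbf{v}^{(1)}_m}{\partial t}\right\|_{L^\infty({\bf L}^2)}$ I use the differentiated Duhamel formula above: the first term is controlled by $\left\|S(t)\mathbf{w}\right\| \le C\left\|\mathbf{w}\right\|$ applied to $P_m(\A\mathbf{v}_0 + \mathbf{F}(0))$, and the convolution term by $\left\|S(t-s)\right\|_{{\bf L}^2\to{\bf L}^2}\le C$ integrated against $\left\|\frac{\partial\mathbf{F}}{\partial t}(s)\right\|$, giving $L^1$ in time of $\frac{\partial\mathbf{F}}{\partial t}$. For $\left\|\frac{\partial\mathbf{v}^{(1)}_m}{\partial t}\right\|_{L^1({\bf H}^2)}$ I again use the differentiated formula: $\left\|S(t)P_m(\A\mathbf{v}_0+\mathbf{F}(0))\right\|_2 \le C t^{-1/2}\left\|P_m(\A\mathbf{v}_0+\mathbf{F}(0))\right\|_1 \le Ct^{-1/2}\left\|\A\mathbf{v}_0+\mathbf{F}(0)\right\|_1$ by the second line of \eqref{sgregularity} (this is exactly why membership in ${\bf H}^1_0$ is assumed, and $t^{-1/2}$ is integrable on $[0,T]$), while the convolution term needs the first line of \cref{fuzhuyinli3} — with $g(s) = P_m\frac{\partial\mathbf{F}}{\partial t}(s)$ — or a direct estimate plus Gronwall; I would instead just bound $\left\|\A\int_0^t S(t-s)P_m\frac{\partial\mathbf{F}}{\partial t}(s)\,ds\right\|$ by moving the time derivative back onto $S$ and integrating by parts, reducing to $\left\|\frac{\partial^2\mathbf{F}}{\partial t^2}\right\|_{L^1({\bf L}^2)}$ plus boundary terms in $\frac{\partial\mathbf{F}}{\partial t}$. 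Finally, $\left\|\frac{\partial^2\mathbf{v}^{(1)}_m}{\partial t^2}\right\|_{L^1({\bf L}^2)}$ follows by differentiating once more: $\frac{\partial^2\mathbf{v}^{(1)}_m}{\partial t^2} = -\A\frac{\partial\mathbf{v}^{(1)}_m}{\partial t} + P_m\frac{\partial\mathbf{F}}{\partial t}$ from the equation \eqref{vm1}, so its $L^1({\bf L}^2)$ norm is controlled by $\left\|\frac{\partial\mathbf{v}^{(1)}_m}{\partial t}\right\|_{L^1({\bf H}^2)}$ (already bounded) together with $\left\|\frac{\partial\mathbf{F}}{\partial t}\right\|_{L^1({\bf L}^2)} \le T\left\|\frac{\partial\mathbf{F}}{\partial t}\right\|_{L^\infty({\bf L}^2)}$.

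**Main obstacle.**
The delicate point throughout is handling the convolution integrals $\int_0^t S(t-s) P_m(\cdot)(s)\,ds$ when the semigroup bound on $S(t-s)$ in the target norm is only $O((t-s)^{-1})$ or $O((t-s)^{-1/2})$ — a non-integrable or borderline-integrable singularity — so one cannot simply pull the norm inside. The systematic remedy is integration by parts in the time variable, which transfers a derivative from the integrand onto the semigroup and converts $\frac{\partial}{\partial s} S(t-s) = \A S(t-s)$ into a smoothing gain, at the cost of boundary terms at $s=0$ and $s=t$; these boundary terms are precisely where $\mathbf{F}(0)$, $\frac{\partial\mathbf{F}}{\partial t}(0)$ and the combined datum $\A\mathbf{v}_0 + \mathbf{F}(0)$ appear, and matching each boundary term to an available hypothesis in \cref{regularitycond0} (in particular using ${\bf H}^1_0$-regularity of $\A\mathbf{v}_0+\mathbf{F}(0)$ against the $t^{-1/2}$ bound, rather than ${\bf L}^2$-regularity against a $t^{-1}$ bound) is the crux of the argument. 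Once the bookkeeping of these boundary contributions is done correctly, every remaining estimate is a routine application of \cref{fuzhuyinli3}, \cref{Pm-v0}, and $\left\|g\right\|_{L^1({\bf X})} \le T\left\|g\right\|_{L^\infty({\bf X})}$.
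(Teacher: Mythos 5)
Your proposal is correct and follows essentially the same route as the paper: Duhamel's formula, the semigroup smoothing bounds of \cref{fuzhuyinli3}, integration by parts in time to convert $\A S(t-s)$ into $\partial_s S(t-s)$ and trade non-integrable singularities for boundary terms, with the combined datum $\A\mathbf{v}_0+\mathbf{F}(0)\in{\bf H}^1_0(\Omega)$ absorbed via the $t^{-1/2}$ estimate, and finally $\frac{\partial^2\mathbf{v}^{(1)}_m}{\partial t^2}$ read off from the differentiated equation \cref{vm1}. The only (harmless) differences are organizational: you differentiate the equation first and then apply Duhamel where the paper derives the formula for $\A\mathbf{v}^{(1)}_m$ and then differentiates it, and you integrate by parts over all of $[0,t]$ for the $L^\infty({\bf H}^2)$ bound where the paper splits the integral at $t/2$.
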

\begin{proof}
	We divide the proof  into four steps.
	\begin{steps}
%

\item Let us   estimate  $\left\| \mathbf{v}^{(1)}_m \right\|_{L^\infty( {\bf H}^2)} $.  From  \cref{vm1solution} we obtain
		\begin{equation}\label{vm1_2}
		\begin{aligned}
			&\left\|\mathbf{v}^{(1)}_m\right \|_2\leq \left\|S(t)P_m\mathbf{v}_0\right \|_2+\left\|\int_{0}^{t}S(t-s)P_m\mathbf{F}(s) \, ds\right \|_2\\
	\leq &\left\|S(t)P_m\mathbf{v}_0\right \|_2+\int_{0}^{\frac{t}{2}}\left\|S(t-s)P_m\mathbf{F}(s)\right \|_2 \, ds+\left\|\int_{\frac{t}{2}}^{t}S(t-s)P_m\mathbf{F}(s) \, ds\right \|_2.	
\end{aligned}			
		\end{equation}
	
		 Using  \cref{L2re}, the first inequality in \cref{sgregularity} with $k=0$ and \cref{Pm-v0} we get
		\begin{equation}\label{vm1_2part1}
			\left\|S(t)P_m\mathbf{v}_0\right \|_2\leq C\left\|P_m\mathbf{v}_0\right \|_2\leq C\left\|\mathbf{v}_0\right \|_2.
		\end{equation}
	By the second inequality in \cref{sgregularity} with $k=0$ and $j=1$	 we have
			\begin{equation}\label{vm1_2part3}
			\begin{aligned}
				\int_{0}^{\frac{t}{2}}\left\|S(t-s)P_m\mathbf{F}(s)\right \|_2 \, ds\leq C\int_{0}^{\frac{t}{2}}(t-s)^{-1}\left\|\mathbf{F}(s)\right \| \, ds\leq C\mathop{\text{max}}\limits_{0\leq s\leq t}\left\|\mathbf{F}(s)\right \|.
			\end{aligned}
		\end{equation} 

%

		Applying the regularity inequality \cref{regu-ineq}, the  analytic semigroup property  
		\begin{equation}\label{semigroup1}
		\A S(t-s)=\frac{\partial S(t-s)}{\partial s},
		\end{equation}
		  integration by parts,    the second inequality in \cref{sgregularity} with $k=j=0$ and \cref{Pm-v0} we obtain
		\begin{equation}\label{vm1_2part4}
			\begin{aligned}
				&C\left\|\int_{\frac{t}{2}}^{t}S(t-s)P_m\mathbf{F}(s) \, ds\right \|_2\\
				\leq & \left\|\A\int_{\frac{t}{2}}^{t}S(t-s)P_m\mathbf{F}(s) \, ds\right \|=\left\|\int_{\frac{t}{2}}^{t}\frac{\partial S(t-s)}{\partial s}P_m\mathbf{F}(s) \, ds\right \|\\
			=& \left\|S(0)P_m\mathbf{F}(t)  -S(\frac{t}{2})P_m \mathbf{F}(\frac{t}{2}) -\int_{\frac{t}{2}}^{t} S(t-s)P_m\frac{\partial \mathbf{F}}{\partial t}(s) \, ds\right \|\\
				\leq& \left\|S(0)P_m\mathbf{F}(t)\right \|+\left\|S(\frac{t}{2})P_m\mathbf{F}(\frac{t}{2})\right \|+\int_{\frac{t}{2}}^{t}\left\|S(t-s)P_m\frac{\partial \mathbf{F}}{\partial t}(s)\right \| \, ds\\
				\leq &C\left(\left\|\mathbf{F}(t) \right \|+\left\|\mathbf{F}(\frac{t}{2})\right \|+\mathop{\text{max}}\limits_{0\leq s\leq t}\left\|\frac{\partial \mathbf{F}}{\partial t}(s)\right \|\right).
			\end{aligned}
		\end{equation}

		Combining \cref{vm1_2part1,vm1_2part3,vm1_2part4}, we get
		\begin{align*}
			\left\|\mathbf{v}^{(1)}_m\right \|_2\leq C\left(\left\|\mathbf{v}_0\right \|_2+\mathop{\text{max}}\limits_{0\leq s\leq t}\left\|\mathbf{F}(s)\right \|+\mathop{\text{max}}\limits_{0\leq s\leq t}\left\|\frac{\partial \mathbf{F}}{\partial t}(s)\right \|\right) ,
		\end{align*}
which indicates 
		\begin{equation}\label{v-1-est}
			\left\|\mathbf{v}^{(1)}_m\right \|_{L^\infty\left({\bf H}^2\right)}\leq C\left(\left\|\mathbf{v}_0\right \|_2+\left\|\mathbf{F} \right \|_{L^\infty\left({\bf L}^2\right)}+\left\|\frac{\partial \mathbf{F}}{\partial t} \right \|_{L^\infty\left({\bf L}^2\right)}\right).
		\end{equation}

		\item Let us estimate $ \left\| \frac{\partial \mathbf{v}^{(1)}_m}{\partial t} \right\|_{L^\infty({\bf L}^2)}  $ and $ \left\| \frac{\partial \mathbf{v}^{(1)}_m}{\partial t} \right\|_{L^1({\bf H}^2)}$. Firstly, from \eqref{vm1},  \eqref{H2-A-equiv} and \cref{Pm-v0} 
we have
		\begin{align*}
			\left\|\frac{\partial \mathbf{v}^{(1)}_m}{\partial t}\right\|&\leq \left\|\A\mathbf{v}^{(1)}_m\right\|+\left\|P_m\mathbf{F}\right\| \leq C\left\|\mathbf{v}^{(1)}_m\right\|_2+\left\|\mathbf{F}\right\|,						
		\end{align*}
		
which plus  \cref{v-1-est} further gives
		\begin{equation}\label{v1m_L-inf-L2}
			\left\|\frac{\partial \mathbf{v}^{(1)}_m}{\partial t}\right \|_{L^\infty\left({\bf L}^2\right)}\leq 
			C\left(\left\|\mathbf{v}_0\right \|_2+\left\|\mathbf{F} \right \|_{L^\infty\left({\bf L}^2\right)}+\left\|\frac{\partial \mathbf{F}}{\partial t} \right \|_{L^\infty\left({\bf L}^2\right)}\right).
		\end{equation}
	Secondly, acting $\A$ on \cref{vm1solution} and using \cref{semigroup1}, we get
		\begin{equation*}\label{Lvm1solution}
			\begin{aligned}
				\A\mathbf{v}^{(1)}_m&=\A S(t)P_m\mathbf{v}_0+\A\int_{0}^{t}S(t-s)P_m\mathbf{F}(s) \, ds \\
				&=\A S(t)P_m\mathbf{v}_0-\int_{0}^{t}\frac{\partial S(t-s)}{\partial s}P_m\mathbf{F}(s) \, ds \\
				&=\A S(t)P_m\mathbf{v}_0+S(t)P_m\mathbf{F}(0)-P_m\mathbf{F}(t)+\int_{0}^{t}S(t-s)P_m\frac{\partial \mathbf{F}}{\partial t}(s) \, ds\\
				&=P_mS(t)\left(\A\mathbf{v}_0+\mathbf{F}(0)\right)-P_m\mathbf{F}(t)+\int_{0}^{t}S(t-s)P_m\frac{\partial \mathbf{F}}{\partial t}(s) \, ds.
			\end{aligned}
		\end{equation*}
		Differentiating this equation with respect to $t$, we obtain
		\begin{equation}\nonumber
			\begin{aligned}
				&\A\frac{\partial \mathbf{v}^{(1)}_m}{\partial t}=P_m\frac{\partial S(t)}{\partial t}\left(\A\mathbf{v}_0+\mathbf{F}(0)\right)-P_m\frac{\partial \mathbf{F}}{\partial t}(t)
				+\int_{0}^{t}\frac{\partial S(t-s)}{\partial t}P_m\frac{\partial \mathbf{F}}{\partial t}(s) \, ds\\
				&=P_m\frac{\partial S(t)}{\partial t}\left(\A\mathbf{v}_0+\mathbf{F}(0)\right)-S(t)P_m\frac{\partial \mathbf{F}}{\partial t}(0)-\int_{0}^{t}S(t-s)P_m\frac{\partial^2 \mathbf{F}}{\partial t^2}(s) \, ds.
			\end{aligned}
		\end{equation}
Thus, in light of   the regularity inequality \cref{regu-ineq},  the second and third inequality in \cref{sgregularity} with $k=j=0$ and \cref{Pm-v0} we have 	
%
		\begin{align}\label{3.2100}
			\left\|\frac{\partial \mathbf{v}^{(1)}_m}{\partial t}\right \|_2&\leq C\left\|\A\frac{\partial \mathbf{v}^{(1)}_m}{\partial t}\right \|\\
			&\leq C\left(t^{-\frac{1}{2}}\left\|\left(\A\mathbf{v}_0+\mathbf{F}(0)\right)\right \|_1 
			+\left\|\frac{\partial \mathbf{F}}{\partial t}(0)\right \|+\int_{0}^{t}\left\|\frac{\partial^2 \mathbf{F}}{\partial t^2}(s)\right \|\, ds\right).\nonumber
		\end{align}
	Furthermore,	applying  \cref{regularitycond0} and integrating the inequality over $[0,T]$, we obtain 
		\begin{equation}\label{v1m_L1L2}
			\left\|\frac{\partial \mathbf{v}^{(1)}_m}{\partial t}\right \|_{L^1\left({\bf H}^2\right)}\leq C\left(\left\|\left(\A\mathbf{v}_0+\mathbf{F}(0)\right)\right \|_1+\left\| \frac{\partial \mathbf{F}}{\partial t} \right \|_{L^\infty({\bf L}^2)}+\left\|\frac{\partial^2 \mathbf{F}}{\partial t^2} \right \|_{L^1\left({\bf L}^2\right)}\right).
		\end{equation}
		
		\item Let us estimate  $ \left\|\frac{\partial^2\mathbf{v}^{(2)}_m}{\partial t^2}\right \|_{L^1({\bf L}^2)}$. Differentiating \cref{vm1} with respect to $t$, we arrive at
		
		\begin{equation}\label{v1mtt}
			\frac{\partial^2\mathbf{v}^{(1)}_m}{\partial t^2}+\A\frac{\partial \mathbf{v}^{(1)}_m}{\partial t}=P_m\frac{\partial \mathbf{F}}{\partial t},
		\end{equation}
		which plus \eqref{3.2100} gives
		\begin{equation*}
			\begin{aligned}
				\left\|\frac{\partial^2\mathbf{v}^{(1)}_m}{\partial t^2}\right \|&\leq\left\|\A\frac{\partial \mathbf{v}^{(1)}_m}{\partial t}\right \|+\left\|P_m\frac{\partial \mathbf{F}}{\partial t}\right \|\\
				&\leq C\left(t^{-\frac{1}{2}}\left\|\left(\A\mathbf{v}_0+\mathbf{F}(0)\right)\right \|_1+\left\|\frac{\partial \mathbf{F}}{\partial t} (0)\right \|+\int_{0}^{t}\left\|\frac{\partial^2 \mathbf{F}}{\partial t^2}(s)\right \|\, ds+\left\|\frac{\partial \mathbf{F}}{\partial t} \right \|\right).
			\end{aligned}
		\end{equation*}
		Consequently, 		integrating the above inequality from $0$ to $T$ yields
		\begin{equation}\label{V1m_tt}
			\left\|\frac{\partial^2\mathbf{v}^{(1)}_m}{\partial t^2}\right \|_{L^1\left({\bf L}^2\right)}\leq C\left(\left\|\left(\A\mathbf{v}_0+\mathbf{F}(0)\right)\right \|_1+\left\|\frac{\partial \mathbf{F}}{\partial t} \right \|_{L^\infty\left({\bf L}^2\right)}+\left\|\frac{\partial^2 \mathbf{F}}{\partial t^2} \right \|_{L^1\left({\bf L}^2\right)}\right).
		\end{equation}
		
\item		Finally, combining  \cref{v-1-est,v1m_L-inf-L2,v1m_L1L2,V1m_tt} implies the desired estimate \eqref{regularityVm1}.
	\end{steps}
 \end{proof}

For $\mathbf{v}^{(2)}_m$ we have the following boundedness result:

\begin{lemma}\label{finitevm2regu}
	Under \cref{regularitycond0}  there holds
	\begin{equation}\label{regularityVm2}
		\begin{aligned}
			&\left\|\mathbf{v}^{(2)}_m\right \|_{L^\infty( {\bf H}^2)} + \left\|\frac{\partial \mathbf{v}^{(2)}_m}{\partial t}\right \|_{L^\infty( {\bf L}^2)} + \left\|\frac{\partial \mathbf{v}^{(2)}_m}{\partial t}\right \|_{L^1({\bf H}^2)} + \left\|\frac{\partial^2\mathbf{v}^{(2)}_m}{\partial t^2}\right \|_{L^1({\bf L}^2)} \\
			\leq &C \left( \left\| \mathbf{v}_0\right \|_2 +  \left\|\A\mathbf{v}_0+\mathbf{F}(0) \right \|_1+ \left\| \mathbf{F} \right \|_{L^\infty({\bf L}^2)} + \left\| \frac{\partial \mathbf{F}}{\partial t} \right \|_{L^\infty({\bf L}^2)} + \left\|\frac{\partial^2\mathbf{F}}{\partial t^2} \right \|_{L^1\left({\bf L}^2\right)} \right).
		\end{aligned}
	\end{equation}
\end{lemma}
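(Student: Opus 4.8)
The plan is to mirror the four-step structure of the proof of \cref{finitevm1regu}, but with the free term $P_m\mathbf{F}$ replaced by the convolution source $\int_0^t\beta(t-s)P_m\B\mathbf{v}_m(s)\,ds$ appearing in \cref{vm2,vm2solution}. The crucial preliminary observation is that, by \cref{vm1solution,vm2solution} and the decomposition \cref{decom-vm}, one has $\mathbf{v}_m=\mathbf{v}_m^{(1)}+\mathbf{v}_m^{(2)}$, and \cref{finitevm1regu} already controls $\mathbf{v}_m^{(1)}$ in all the required norms; hence a self-referential estimate on $\mathbf{v}_m^{(2)}$ together with a Gronwall argument (via \cref{fuzhuyinli2}) will close the loop. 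Throughout I will use that $\beta(t)=E_\alpha(-(t/\tau_\sigma)^\alpha)$ is bounded on $[0,T]$ by \cref{MLbound} and that $\snm{\beta'(t)}\le Ct^{-\alpha-1}$ by \cref{ML-t-bound}, and that $\B$ is bounded from ${\bf H}^2\cap{\bf H}^1_0$ to ${\bf L}^2$ and from ${\bf H}^1_0$ to ${\bf H}^{-1}$ via \cref{bounded-property}.

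For \textbf{Step 1} (bounding $\nm{\mathbf{v}_m^{(2)}}_{L^\infty({\bf H}^2)}$) I would start from \cref{vm2solution}, apply $\A$, and split the outer $S$-integral at $t/2$ exactly as in \cref{vm1_2}; on $[0,t/2]$ I use the third inequality of \cref{sgregularity} with $k=0,j=1$ to gain $(t-s)^{-1}$, while on $[t/2,t]$ I use the semigroup identity \cref{semigroup1}, integrate by parts in $s$, and control the resulting boundary and interior terms. The inner convolution $h_m(s):=\int_0^s\beta(s-r)P_m\B\mathbf{v}_m(r)\,dr$ and its $s$-derivative $\beta(0)P_m\B\mathbf{v}_m(s)+\int_0^s\beta'(s-r)P_m\B\mathbf{v}_m(r)\,dr$ are both bounded in ${\bf L}^2$ by $C\int_0^s(1+(s-r)^{-\alpha-1})\nm{\mathbf{v}_m(r)}_2\,dr$-type quantities — but the kernel $s^{-\alpha-1}$ is \emph{not} integrable, so this naive bound must be handled more carefully (see below). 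Once $h_m$ and $\partial_s h_m$ are controlled pointwise in terms of $\sup_{r\le t}\nm{\mathbf{v}_m(r)}_2$, the argument of \cref{vm1_2part1,vm1_2part3,vm1_2part4} carries over verbatim, yielding $\nm{\mathbf{v}_m^{(2)}(t)}_2\le C\int_0^t(\cdots)\nm{\mathbf{v}_m(s)}_2\,ds$; combined with \cref{finitevm1regu} this gives a Gronwall inequality for $\nm{\mathbf{v}_m(t)}_2$, hence a bound on $\nm{\mathbf{v}_m^{(2)}}_{L^\infty({\bf H}^2)}$.

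For \textbf{Steps 2--3} I would differentiate \cref{vm2} to get $\partial_t\mathbf{v}_m^{(2)}=-\A\mathbf{v}_m^{(2)}+h_m(t)$ and $\partial_t^2\mathbf{v}_m^{(2)}=-\A\partial_t\mathbf{v}_m^{(2)}+\partial_t h_m(t)$, then bound $\nm{\A\mathbf{v}_m^{(2)}}$ by the ${\bf H}^2$-estimate of Step 1, $\nm{h_m(t)}$ and $\nm{\partial_t h_m(t)}$ by the pointwise bounds already obtained, and $\nm{\A\partial_t\mathbf{v}_m^{(2)}}$ by acting $\A$ on \cref{vm2solution}, using \cref{semigroup1} and integration by parts to transfer the time derivative onto $h_m$ (as in the derivation of \cref{3.2100}); integrating the resulting $t^{-1/2}$-type singularity over $[0,T]$ gives the $L^1({\bf H}^2)$ and $L^1({\bf L}^2)$ bounds. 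The \textbf{main obstacle} is precisely the non-integrable kernel singularity $s^{-\alpha-1}$ coming from $\beta'$: one cannot simply write $\nm{\partial_s h_m(s)}\le C\int_0^s(s-r)^{-\alpha-1}\nm{\mathbf{v}_m(r)}_2\,dr$. The fix is to not differentiate $\beta$ under the integral but instead, when $h_m$ is hit by $\A$ or by $\partial_t$ inside the $S$-convolution, integrate by parts in the \emph{inner} variable $r$ to move the derivative off $\beta$ and onto $\mathbf{v}_m$ (using $\mathbf{v}_m(0)=P_m\mathbf{v}_0$ and $\beta(0)=1$), exploiting the bound $\nm{\beta}_{L^\infty(0,T)}\le C$ instead; the regularity $\mathbf{v}_m\in C^2({\bf H}^1_0)$ from \cref{volterra-in-di-existence} (equivalently \cref{appcoe3}) legitimizes this and feeds back $\nm{\partial_t\mathbf{v}_m}$, which is controlled by the already-established Step-2 bounds for $\mathbf{v}_m^{(1)}$ plus the bootstrap. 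Finally \textbf{Step 4} is just assembling \cref{regularityVm2} from the four pieces, exactly as in the last step of \cref{finitevm1regu}.
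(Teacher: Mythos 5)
Your high-level architecture matches the paper's (decompose the convolution source via $\mathbf{v}_m=\mathbf{v}^{(1)}_m+\mathbf{v}^{(2)}_m$, derive a self-referential inequality, close with Lemma \ref{fuzhuyinli2}), and your inner integration by parts that moves the time derivative off $\beta$ and onto $\mathbf{v}_m$ is exactly the paper's identity \cref{vm2_t}. But you are missing the one device the paper's proof actually runs on: the interchange of the order of integration
\begin{equation*}
\int_0^t \A S(t-s)\int_0^s\beta(s-r)\,\mathbf{w}(r)\,dr\,ds=\int_0^t\Bigl(\A\int_0^{t-r}S(t-r-\sigma)\beta(\sigma)\,d\sigma\Bigr)\mathbf{w}(r)\,dr,
\end{equation*}
combined with the first assertion of Lemma \ref{fuzhuyinli3} (whose hypotheses are precisely \cref{MLbound} and \cref{ML-t-bound} applied to $g=\beta$), which yields the bound $C\int_0^t(t-r)^{-\alpha}\left\|\mathbf{w}(r)\right\|\,dr$. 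The paper uses this both in Step 1 for the terms $Q_1,Q_2$ (with $\mathbf{w}=P_m\B\mathbf{v}^{(i)}_m$, so only $\left\|\mathbf{v}_m\right\|_2$ enters and the Gronwall argument is self-contained) and in Step 2 for \cref{v-2-t-H2} (with $\mathbf{w}=P_m\B\,\partial_t\mathbf{v}_m$, so only $\left\|\partial_t\mathbf{v}_m\right\|_2$ enters). You never invoke this lemma or the interchange.

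The substitute you propose, transplanting the $t/2$-split plus outer integration by parts from the proof of Lemma \ref{finitevm1regu}, does not close. Each outer integration by parts transfers one $s$-derivative onto $h_m(s)=\int_0^s\beta(s-r)P_m\B\mathbf{v}_m(r)\,dr$, and after your inner integration by parts $\partial_sh_m$ involves $P_m\B\,\partial_r\mathbf{v}_m$, hence $\left\|\partial_t\mathbf{v}_m\right\|_2$. Consequently your Step 1 bound on $\left\|\mathbf{v}^{(2)}_m\right\|_{L^\infty({\bf H}^2)}$ already depends on $\left\|\partial_t\mathbf{v}^{(2)}_m\right\|_{L^1({\bf H}^2)}$, a quantity only produced in your Steps 2--3; and Step 2 run by the same technique (``as in the derivation of \cref{3.2100}'') would require a second outer integration by parts and hence $\partial_s^2h_m$, which brings in $\left\|\partial_t^2\mathbf{v}_m\right\|_2$ --- a norm controlled nowhere (only $\left\|\partial_t^2\mathbf{v}_m\right\|_{L^1({\bf L}^2)}$ is available, and only a posteriori). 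The ``bootstrap'' you appeal to therefore never terminates: your method spends one time derivative of $\mathbf{v}_m$ in ${\bf H}^2$ per application of the smoothing, while the paper's interchange-plus-Lemma-\ref{fuzhuyinli3} spends none. A smaller point: the obstruction you flag, non-integrability of $\beta'$, is an artifact of the crude bound \cref{ML-t-bound}; near $t=0$ one has $\snm{\beta'(t)}=t^{\alpha-1}E_{\alpha,\alpha}(-t^\alpha)\leq Ct^{\alpha-1}$, which is integrable. The genuine difficulty is the collision of the $(t-s)^{-1}$ singularity of $\A S(t-s)$ with the weakly singular kernel, and that is exactly what Lemma \ref{fuzhuyinli3} is designed to resolve.
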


\begin{proof}
	The proof is divided into four steps.
\begin{steps}
	\item In view of \cref{vm2solution} and \cref{decom-vm}, we decompose $\mathbf{v}^{(2)}_m$ as
		\begin{align*}
		\mathbf{v}^{(2)}_m=Q_1+Q_2,
	\end{align*}
	where
	\begin{align*}
		Q_i=\int_{0}^{t}S(t-s)\int_{0}^{s}\beta(s-r)P_m\B\mathbf{v}^{(i)}_{m}(r) \, dr \, ds, \qquad i=1,\ 2.
	\end{align*}
	For $Q_1$, 
by	\cref{regu-ineq},   \cref{MLbound}, \cref{ML-t-bound}, \cref{fuzhuyinli3,v-1-est} we obtain
	\begin{equation*}
		\begin{aligned}\label{v2m_1}
			\left\|Q_1\right\|_2&=\left\|\int_{0}^{t}S(t-s)\int_{0}^{s}\beta(s-r)P_m\B\mathbf{v}^{(1)}_m(r) \,dr\,ds\right \|_2 \\
			&\leq C\left\|\int_{0}^{t}\A S(t-s)\int_{0}^{s}\beta(s-r)P_m\B\mathbf{v}^{(1)}_m(r) \,dr\,ds\right \| \\
			&= C\left\|\int_{0}^{t}\left(\A\int_{0}^{t-r} S(t-r-s)\beta(s)\,ds\right)P_m\B\mathbf{v}^{(1)}_m(r) \,dr\right \|\\
			&\leq C \int_{0}^{t}\left\|\left(\A\int_{0}^{t-r} S(t-r-s)\beta(s)\,ds\right)P_m\B\mathbf{v}^{(1)}_m(r) \right\|\,dr \\
			&\leq C\int_{0}^{t}(t-r)^{-\alpha}\left\|P_m\B\mathbf{v}^{(1)}_m(r)\right \| \, dr\\
			&\leq C\int_{0}^{t}(t-r)^{-\alpha}\left\|\mathbf{v}^{(1)}_m(r)\right \|_2 \, dr\\
			&\leq C\left(\left\|\mathbf{v}_0\right\|_2+\mathop{\text{max}}\limits_{0\leq s\leq t}\left\|\mathbf{F}(s)\right \|+\mathop{\text{max}}\limits_{0\leq s\leq t}\left\|\frac{\partial \mathbf{F}}{\partial t}(s)\right \|\right)t^{1-\alpha}.
		\end{aligned}
	\end{equation*}
	Similarly, for $Q_2$ we have 
	\begin{equation*}\label{v2m_2}
		\left\|Q_2\right\|_2\leq C\int_{0}^{t}(t-s)^{-\alpha}\left\|\mathbf{v}^{(2)}_m(s)\right \|_2 \, ds.
	\end{equation*}
	Combining the above two estimates, 
	we arrive at
	\begin{align*}
		\left\|\mathbf{v}^{(2)}_m\right \|_2 &\leq \left\|Q_1\right\|_2+\left\|Q_2\right\|_2\\
		&\leq C\Biggl\{ 
		\Biggl( \left\|\mathbf{v}_0\right \|_2 + \max_{0 \leq s \leq t} \left\|\mathbf{F}(s)\right \| + \max_{0 \leq s \leq t} \left\|\frac{\partial \mathbf{F}}{\partial t}(s)\right \| \Biggr) t^{1-\alpha} \\
		& \qquad \qquad\qquad \qquad + \int_{0}^{t} (t-s)^{-\alpha} \left\|\mathbf{v}^{(2)}_m(s)\right \|_2 \, ds 
		\Biggr\}.
	\end{align*}
	Thus, applying the Gronwall's inequality (\cref{fuzhuyinli2}), we obtain
	\begin{align*}
		\left\|\mathbf{v}^{(2)}_m\right \|_2\leq C\left(\left\|\mathbf{v}_0\right \|_2+\mathop{\text{max}}\limits_{0\leq s\leq t}\left\|\mathbf{F}(s)\right \|+\mathop{\text{max}}\limits_{0\leq s\leq t}\left\|\frac{\partial \mathbf{F}}{\partial t}(s)\right \|\right)t^{1-\alpha},
	\end{align*}
	which indicates
	\begin{equation}\label{v-2-est}
		\left\|\mathbf{v}^{(2)}_m\right \|_{L^\infty\left({\bf H}^2\right)}\leq C\left(\left\|\mathbf{v}_0\right \|_2+\left\|\mathbf{F} \right \|_{L^\infty\left({\bf L}^2\right)}+\left\|\frac{\partial \mathbf{F}}{\partial t} \right \|_{L^\infty\left({\bf L}^2\right)}\right).
	\end{equation}
	
	\item Let us estimate $\left\|\frac{\partial \mathbf{v}^{(2)}_m}{\partial t}\right \|_{L^\infty\left({\bf L}^2\right)}$ and $\left\|\frac{\partial \mathbf{v}^{(2)}_m}{\partial t}\right \|_{L^1\left({\bf H}^2\right)}$. Firstly, from \cref{vm2,H2-A-equiv,MLbound,L2re} we have
	\begin{equation}
		\begin{aligned}
			\left\|\frac{\partial \mathbf{v}^{(2)}_m}{\partial t}\right \|&\leq \left\|\A\mathbf{v}^{(2)}_m\right \|+\left\|\int_{0}^{t}\beta(t-s)P_m\B\mathbf{v}_m(s) \, ds\right \| \\
			&\leq\left\|\mathbf{v}^{(2)}_m\right \|_2+\int_{0}^{t}(t-s)^{-\alpha}\left\|\mathbf{v}_m(s)\right \|_2 \, ds,
		\end{aligned}
	\end{equation}
	which plus \cref{v-1-est,v-2-est} further gives
	\begin{equation}\label{vm2-t-L2}
		\left\|\frac{\partial \mathbf{v}^{(2)}_m}{\partial t}\right \|_{L^\infty\left({\bf L}^2\right)}\leq C\left(\left\|\mathbf{v}_0\right \|_2+\left\|\mathbf{F} \right \|_{L^\infty\left({\bf L}^2\right)}+\left\|\frac{\partial \mathbf{F}}{\partial t} \right \|_{L^\infty\left({\bf L}^2\right)}\right).
	\end{equation}
	Secondly, differentiating equation \cref{vm2solution} with respect to $t$ and 
	applying 
	 integration by parts,  we obtain 
	\begin{equation}\label{vm2_t}
		\begin{aligned}
			\frac{\partial \mathbf{v}^{(2)}_m}{\partial t}&=\int_{0}^{t}\beta(t-r)P_m\B\mathbf{v}_m(r) \,dr+\int_{0}^{t}\frac{\partial S(t-s)}{\partial t}\int_{0}^{s}\beta(s-r)P_m\B\mathbf{v}_m(r) \,dr \,ds\\
			&=\int_{0}^{t}\beta(t-r)P_m\B\mathbf{v}_m(r) \,dr-\int_{0}^{t}\frac{\partial S(t-s)}{\partial s}\int_{0}^{s}\beta(s-r)P_m\B\mathbf{v}_m(r) \,dr \,ds\\
			&=\int_{0}^{t}S(t-s)\beta(s)P_m\B\mathbf{v}_0 \, ds+\int_{0}^{t}S(t-s)\int_{0}^{s}\beta(r)P_m\B\frac{\partial\mathbf{v}_m}{\partial s}(s-r) \,dr \,ds.
		\end{aligned}
	\end{equation}
In light of 
	\cref{MLbound} and the first inequality in \cref{sgregularity} with $k=0$ we get
	\begin{equation}\label{331}
		\left\|\int_{0}^{t}S(t-s)\beta(s)P_m\B\mathbf{v}_0 \, ds\right \|_2\leq C t^{-\alpha}\left\|\mathbf{v}_0\right \|_2.
	\end{equation}
	By \cref{regu-ineq,fuzhuyinli3} we find
	\begin{equation}\label{v-2-t-H2}
		\begin{aligned}
			&\left\|\int_{0}^{t}S(t-s)\int_{0}^{s}\beta(r)P_m\B\frac{\partial\mathbf{v}_m}{\partial s}(s-r) \,dr \,ds\right \|_2\\
			&\leq C\left\|\int_{0}^{t}\A S(t-s)\int_{0}^{s}\beta(r)P_m\B\frac{\partial\mathbf{v}_m}{\partial s}(s-r) \,dr \,ds\right \|\\
			&= C\left\|\int_{0}^{t}\left(\A \int_{0}^{t-r}S(t-r-s)\beta(s)\,ds\right)P_m\B\frac{\partial\mathbf{v}_m}{\partial s}(r) \,dr \right \|\\
			&\leq C\int_{0}^{t}(t-r)^{-\alpha}\left\|P_m\B\frac{\partial\mathbf{v}_m}{\partial s}(r)\right \| \, dr\\
			&\leq C\int_{0}^{t}(t-r)^{-\alpha}\left\|\frac{\partial\mathbf{v}_m}{\partial t}(r)\right \|_2 \, dr\\
			&= C\left(\int_{0}^{t}(t-r)^{-\alpha}\left\|\frac{\partial \mathbf{v}^{(1)}_m}{\partial t}(r)\right \|_2 \, dr+\int_{0}^{t}(t-r)^{-\alpha}\left\|\frac{\partial \mathbf{v}^{(2)}_m}{\partial t}(r)\right \|_2\, dr\right).
		\end{aligned}
	\end{equation}
	Combining \cref{v1m_L1L2,vm2_t,331,v-2-t-H2}, we obtain
	\begin{equation*}
		\begin{aligned}
			\left\|\frac{\partial\mathbf{v}^{(2)}_m}{\partial t}\right\|_2
			\leq C \biggl( 
			t^{-\alpha}\left\|\mathbf{v}_0\right \|_2+\left\|\left(\A\mathbf{v}_0+\mathbf{F}(0)\right)\right \|_1&+\left\| \frac{\partial \mathbf{F}}{\partial t} \right \|_{L^\infty({\bf L}^2)}+\left\|\frac{\partial^2 \mathbf{F}}{\partial t^2} \right \|_{L^1\left({\bf L}^2\right)} \\
			& + \int_{0}^{t}(t-s)^{-\alpha}\left\|\frac{\partial \mathbf{v}^{(2)}_m}{\partial t}(s)\right\|_2 \, ds  \biggr),
		\end{aligned}
	\end{equation*}
which, 	
together with  \cref{fuzhuyinli2}, yields
	\begin{equation}\label{vm2-t-H2}
		\left\|\frac{\partial \mathbf{v}^{(2)}_m}{\partial t}\right \|_{L^1\left({\bf H}^2\right)}\leq C\left(\left\|\mathbf{v}_0\right \|_2+\left\|\left(\A\mathbf{v}_0+\mathbf{F}(0)\right)\right \|_1+\left\| \frac{\partial \mathbf{F}}{\partial t} \right \|_{L^\infty({\bf L}^2)}+\left\|\frac{\partial^2 \mathbf{F}}{\partial t^2} \right \|_{L^1\left({\bf L}^2\right)}\right).
	\end{equation}

	\item Let us estimate $\|\frac{\partial^2\mathbf{v}^{(2)}_m}{\partial t^2}\|_{L^1({\bf L}^2)}$. Differentiating equation \cref{vm2} with respect to $t$, we have
	\begin{equation}\label{vm2tt}
		\frac{\partial^2\mathbf{v}^{(2)}_m}{\partial t^2}+\A\frac{\partial \mathbf{v}^{(2)}_m}{\partial t}=\beta(t)P_m\B\mathbf{v}_m(0)+\int_{0}^{t}\beta(s)P_m\B\frac{\partial\mathbf{v}_m}{\partial t}(t-s) \, ds,
	\end{equation}
	which plus \cref{H2-A-equiv} gives
	\begin{equation}\nonumber
		\begin{aligned}
			\left\|\frac{\partial^2\mathbf{v}^{(2)}_m}{\partial t^2}\right \|&\leq\left\|\A\frac{\partial \mathbf{v}^{(2)}_m}{\partial t}\right\|+\left\|\beta(t)P_m\B\mathbf{v}_m(0)\right\|+\left\|\int_{0}^{t}\beta(s)P_m\B\frac{\partial\mathbf{v}_m}{\partial t}(t-s) \, ds\right\| \\
			&\leq  C\left(\left\|\frac{\partial \mathbf{v}^{(2)}_m}{\partial t}\right \|_2+t^{-\alpha}\left\|\mathbf{v}_0\right \|_2+\int_{0}^{t}(t-s)^{-\alpha}\left\|\frac{\partial\mathbf{v}_m}{\partial t}(s)\right\|_2 \, ds
			\right).
		\end{aligned}
	\end{equation}
	Hence, integrating the above inequality from $0$ to $T$ indicates
	\begin{equation}\label{vm2-tt-L2}
		\left\|\frac{\partial^2\mathbf{v}^{(2)}_m}{\partial t^2}\right \|_{L^1\left({\bf L}^2\right)}\leq C\left(\left\|\mathbf{v}_0\right \|_2+\left\|\left(\A\mathbf{v}_0+\mathbf{F}(0)\right)\right \|_1+\left\| \frac{\partial \mathbf{F}}{\partial t} \right \|_{L^\infty({\bf L}^2)}+\left\|\frac{\partial^2 \mathbf{F}}{\partial t^2} \right \|_{L^1\left({\bf L}^2\right)}\right).
	\end{equation}

	\item Finally, combining \cref{v-2-est,vm2-t-L2,vm2-t-H2,vm2-tt-L2} implies the desired estimate \cref{regularityVm2}.
\end{steps}

\end{proof}

In light of \cref{finitevm1regu,finitevm2regu}, we immediately arrive at the
following  stability result:
\begin{theorem}\label{finitedimenregu}
	Under \cref{regularitycond0} there holds
	\begin{equation}\label{energyestimates}
		\begin{aligned}
			&\left\|\mathbf{v}_m\right\|_{L^\infty({\bf H}^2)} + \left\|\frac{\partial\mathbf{v}_m}{\partial t}\right\|_{L^\infty({\bf L}^2)} + \left\|\frac{\partial\mathbf{v}_m}{\partial t}\right\|_{L^1({\bf H}^2)} + \left\|\frac{\partial^2\mathbf{v}_m}{\partial t^2}\right\|_{L^1({\bf L}^2)} \\
			\leq &C \left( \left\| \mathbf{v}_0\right \|_2  + \left\|\left(\A\mathbf{v}_0+\mathbf{F}(0)\right)\right \|_1 + \left\| \mathbf{F} \right \|_{L^\infty({\bf L}^2)} + \left\| \frac{\partial \mathbf{F}}{\partial t} \right \|_{L^\infty({\bf L}^2)} + \left\|\frac{\partial^2 \mathbf{F}}{\partial t^2}\right\|_{L^1({\bf L}^2)} \right).
		\end{aligned}
	\end{equation}
\end{theorem}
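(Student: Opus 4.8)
The plan is to read \cref{finitedimenregu} off directly from the two boundedness lemmas just established, using the linear decomposition $\mathbf{v}_m=\mathbf{v}^{(1)}_m+\mathbf{v}^{(2)}_m$ of \cref{decom-vm}. By construction, \cref{vm1,vm2} split the Galerkin problem \cref{appcoe3} additively, and \cref{volterra-in-di-existence} already provides $\mathbf{v}_m\in C^1(\mathbf{H}^1_0)\cap C^2(\mathbf{H}^1_0)$, so $\partial_t\mathbf{v}_m$ and $\partial_t^2\mathbf{v}_m$ are well defined and inherit the same splitting. Each of the four quantities on the left of \cref{energyestimates} is a norm on a Bochner space applied to $\mathbf{v}_m$, $\partial_t\mathbf{v}_m$, or $\partial_t^2\mathbf{v}_m$, so the triangle inequality applies term by term.

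Concretely, first I would write
\[
\|\mathbf{v}_m\|_{L^\infty(\mathbf{H}^2)}\le\|\mathbf{v}^{(1)}_m\|_{L^\infty(\mathbf{H}^2)}+\|\mathbf{v}^{(2)}_m\|_{L^\infty(\mathbf{H}^2)},
\]
and the analogous inequalities for $\|\partial_t\,\cdot\,\|_{L^\infty(\mathbf{L}^2)}$, $\|\partial_t\,\cdot\,\|_{L^1(\mathbf{H}^2)}$ and $\|\partial_t^2\,\cdot\,\|_{L^1(\mathbf{L}^2)}$; summing the four inequalities bounds the left side of \cref{energyestimates} by the sum of the left sides of \cref{regularityVm1,regularityVm2}.

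Then I would invoke \cref{finitevm1regu,finitevm2regu}: the right-hand sides of \cref{regularityVm1} and \cref{regularityVm2} coincide, up to the generic constant, with
\[
C\Big(\|\mathbf{v}_0\|_2+\|\A\mathbf{v}_0+\mathbf{F}(0)\|_1+\|\mathbf{F}\|_{L^\infty(\mathbf{L}^2)}+\|\tfrac{\partial\mathbf{F}}{\partial t}\|_{L^\infty(\mathbf{L}^2)}+\|\tfrac{\partial^2\mathbf{F}}{\partial t^2}\|_{L^1(\mathbf{L}^2)}\Big),
\]
so adding them and absorbing the factor $2$ into $C$ yields exactly \cref{energyestimates}; \cref{regularitycond0} guarantees that every term on the right is finite.

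I do not expect a genuine obstacle at this step: the theorem is pure bookkeeping on top of \cref{finitevm1regu,finitevm2regu}. The substantive work — splitting the Duhamel integrals at $t/2$, using the analytic-semigroup smoothing estimates of \cref{fuzhuyinli3} and \cref{sgregularity}, the Mittag--Leffler kernel bounds \cref{MLbound,ML-t-bound}, and the singular Gronwall argument \cref{fuzhuyinli2} — has already been carried out in proving those two lemmas, and in particular their constants are independent of $m$ (thanks to \cref{Pm-v0}), which is precisely what makes \cref{energyestimates} uniform in the Galerkin dimension and hence usable for the later passage $m\to\infty$.
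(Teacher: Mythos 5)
Your proposal is correct and matches the paper exactly: the paper derives \cref{finitedimenregu} ``immediately'' from \cref{finitevm1regu,finitevm2regu} via the decomposition \cref{decom-vm} and the triangle inequality, which is precisely the bookkeeping you spell out. Your added remark that the constants are uniform in $m$ (via \cref{Pm-v0}) correctly identifies why the estimate is usable for the later limit $m\to\infty$.
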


\subsection{Existence, uniqueness and regularity of weak solution}

Based on \cref{finitedimenregu}, we   follow the standard line of the Galerkin method  (cf. \cite{Evans2010}) to show 
 the wellposedness of     the weak problem  \cref{weak problem}.
 
\begin{theorem}\label{cunzaiweiyixing}
	Under \cref{regularitycond0}  the   problem \cref{weak problem} admits  a unique solution $\mathbf{v}(t) $, and there holds 
	\begin{equation}\label{weaksolure} 
		\begin{aligned}
			&\left\|\mathbf{v}\right\|_{L^\infty({\bf H}^2)} + \left\|\frac{\partial\mathbf{v}}{\partial t}\right\|_{L^\infty({\bf L}^2)} + \left\|\frac{\partial\mathbf{v}}{\partial t}\right\|_{L^1({\bf H}^2)} + \left\|\frac{\partial^2\mathbf{v}}{\partial t^2}\right\|_{L^1({\bf L}^2)} \\
			\leq &C \left( \left\| \mathbf{v}_0\right \|_2  + \left\|\left(\A\mathbf{v}_0+\mathbf{F}(0)\right)\right \|_1 + \left\| \mathbf{F} \right \|_{L^\infty({\bf L}^2)} + \left\| \frac{\partial \mathbf{F}}{\partial t} \right \|_{L^\infty({\bf L}^2)} + \left\|\frac{\partial^2 \mathbf{F}}{\partial t^2}\right\|_{L^1({\bf L}^2)} \right).
		\end{aligned}
	\end{equation}
%
\end{theorem}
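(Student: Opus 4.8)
The plan is to run the classical Galerkin limiting argument in the spirit of \cite{Evans2010}, feeding on the uniform bounds of \cref{finitedimenregu} and on the density of $\bigcup_{m\ge 1}\mathscr{P}_m$ in ${\bf H}^1_0(\Omega)$. First I would observe that \cref{finitedimenregu} makes $\{\mathbf{v}_m\}$ bounded in $L^\infty({\bf H}^2)$, $\{\partial_t\mathbf{v}_m\}$ bounded in $L^\infty({\bf L}^2)\cap L^1({\bf H}^2)$, and $\{\partial_t^2\mathbf{v}_m\}$ bounded in $L^1({\bf L}^2)$, all with right-hand side the quantity $R$ appearing in \cref{energyestimates}. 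By the Banach--Alaoglu theorem (using $L^\infty({\bf H}^2)=(L^1({\bf H}^{-2}))^{*}$ and $L^\infty({\bf L}^2)=(L^1({\bf L}^2))^{*}$) there is a subsequence, not relabelled, with $\mathbf{v}_m\rightharpoonup^{*}\mathbf{v}$ in $L^\infty({\bf H}^2)$ and $\partial_t\mathbf{v}_m\rightharpoonup^{*}\partial_t\mathbf{v}$ in $L^\infty({\bf L}^2)$, so that $\mathbf{v}\in W^{1,\infty}({\bf L}^2)\cap L^\infty({\bf H}^2)\hookrightarrow C([0,T];{\bf L}^2(\Omega))$; since ${\bf H}^2\hookrightarrow\hookrightarrow{\bf H}^1\hookrightarrow{\bf L}^2$ one may, if convenient, also invoke an Aubin--Lions--Simon compactness to get $\mathbf{v}_m\to\mathbf{v}$ strongly in $C([0,T];{\bf H}^1_0(\Omega))$. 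Together with $P_m\mathbf{v}_0\to\mathbf{v}_0$ in ${\bf L}^2(\Omega)$ this yields the initial condition $\mathbf{v}(0)=\mathbf{v}_0$.

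Second, I would pass to the limit in the Galerkin equations \cref{appcoe}. Fix $k$, a test function $\mathbf{w}\in\mathscr{P}_k$, and a scalar $\chi\in C^\infty_c((0,T))$; for $m\ge k$ multiply \cref{appcoe} by $\chi$ and integrate over $[0,T]$. Every term is linear in $\mathbf{v}_m$: the term with $\langle\partial_t\mathbf{v}_m,\mathbf{w}\rangle$ passes to the limit by $\partial_t\mathbf{v}_m\rightharpoonup^{*}\partial_t\mathbf{v}$, the term $a(\mathbf{v}_m,\mathbf{w})$ by $\mathbf{v}_m\rightharpoonup\mathbf{v}$ in $L^2({\bf H}^1_0)$, and the memory term $\int_0^t\beta(t-s)b(\mathbf{v}_m(s),\mathbf{w})\,ds$ because $0<\beta\le 1$ by \cref{MLbound} (so $s\mapsto\beta(t-s)\mathbf{1}_{[0,t]}(s)\mathbf{w}$ lies in $L^2(0,T;{\bf H}^1_0)$ and $b$ is bounded by \cref{bounded-property}), while the source term is untouched. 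Passing $m\to\infty$ and then using density of $\bigcup_k\mathscr{P}_k$ in ${\bf H}^1_0(\Omega)$ gives the variational identity of \cref{weak problem} for all $\mathbf{w}\in{\bf H}^1_0(\Omega)$, first in the $L^2(0,T)$ sense in $t$ and then, by the regularity of $\mathbf{v}$, for a.e.\ $t$. The estimate \cref{weaksolure} follows: the two $L^\infty$-in-time terms by weak-$*$ lower semicontinuity of the norm, and the two $L^1$-in-time terms by a Fatou-type lower-semicontinuity argument against the uniform bound $R$ of \cref{energyestimates} (or, alternatively, by re-deriving them for the limit $\mathbf{v}$ directly from the Duhamel representation, exactly as in \cref{finitevm1regu,finitevm2regu} with $P_m$ removed, using \cref{regu-ineq,fuzhuyinli3}).

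Third, for uniqueness let $\mathbf{v}$ and $\hat{\mathbf{v}}$ both solve \cref{weak problem} with the regularity just obtained, and set $\mathbf{e}=\mathbf{v}-\hat{\mathbf{v}}$, so that $\mathbf{e}(0)=0$ and $\langle\partial_t\mathbf{e},\mathbf{w}\rangle+a(\mathbf{e},\mathbf{w})=\int_0^t\beta(t-s)b(\mathbf{e}(s),\mathbf{w})\,ds$ for a.e.\ $t$ and all $\mathbf{w}\in{\bf H}^1_0(\Omega)$; since $\mathbf{e}\in L^2({\bf H}^1_0)$ and $\partial_t\mathbf{e}\in L^2({\bf H}^{-1})$ the choice $\mathbf{w}=\mathbf{e}(t)$ is legitimate. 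Using $\tfrac12\tfrac{d}{dt}\|\mathbf{e}\|^2=\langle\partial_t\mathbf{e},\mathbf{e}\rangle$, the coercivity in \cref{a-property}, the bound \cref{bounded-property}, and $\beta\in L^1(0,T)$ with $|\beta|\le 1$, integrate over $[0,r]$ and apply \cref{fuzhuyinli-continuous} with $f=\|\mathbf{e}\|_1$ to absorb the double integral; choosing the free parameter small and invoking Gronwall's inequality (\cref{fuzhuyinli2}) forces $\int_0^r\|\mathbf{e}(t)\|_1^2\,dt\equiv 0$, hence $\mathbf{e}\equiv 0$.

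The main obstacle I anticipate is precisely the passage to the limit for the two $L^1$-in-time quantities: unlike the $L^\infty$ bounds, the bounds on $\partial_t\mathbf{v}_m$ in $L^1({\bf H}^2)$ and on $\partial_t^2\mathbf{v}_m$ in $L^1({\bf L}^2)$ live in non-reflexive spaces, so no weak limit can be extracted there directly and one must argue that the limit $\mathbf{v}$ already produced by the $L^\infty$/compactness step indeed belongs to $W^{1,1}({\bf H}^2)\cap W^{2,1}({\bf L}^2)$ with the stated norm. This is settled either by a careful Fatou/lower-semicontinuity argument along an a.e.-in-$t$ convergent subsequence, or by simply repeating the Duhamel-based estimates of \cref{finitevm1regu,finitevm2regu} for $\mathbf{v}$ itself; everything else in the proof is routine.
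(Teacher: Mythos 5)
Your proposal is correct and follows essentially the same route as the paper: Galerkin approximation, the uniform bounds of \cref{finitedimenregu}, weak-$*$ compactness, passage to the limit in the integrated Galerkin equations with density of $\bigcup_k\mathscr{P}_k$, and lower semicontinuity for the estimate \cref{weaksolure}. The only notable deviations are refinements — you prove uniqueness by a direct energy/Gronwall argument on the difference of two solutions (the paper instead invokes linearity and the a priori bound \cref{weaksolure}, which strictly speaking is established only for the constructed solution), and you explicitly flag and handle the non-reflexivity of the $L^1$-in-time bounds, a point the paper passes over silently.
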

\begin{proof}
	Firstly, according to   \cref{energyestimates} we know that
	\begin{align*}
		&\left\{\mathbf{v}_m\right\}_{m=1}^{\infty}\text{ is bounded in }L^\infty\left({\bf H}^2(\Omega)\cap {\bf H}^1_0(\Omega)\right),\text{ and }\\
		&\left\{\frac{\partial\mathbf{v}_m}{\partial t}\right\}_{m=1}^{\infty}\text{ is bounded in }L^\infty\left({\bf L}^2(\Omega)\right).
	\end{align*} 
Thus, there exists a subsequence of $  \left\{\mathbf{v}_m\right\}_{m=1}^{\infty}$, still denoted by $ \left\{\mathbf{v}_m\right\}_{m=1}^{\infty}$ for simplicity,  and a function $\mathbf{v}\in L^\infty\left([0,T];{\bf H}^2(\Omega)\cap {\bf H}^1_0(\Omega)\right)$ with $\frac{\partial\mathbf{v}}{\partial t}\in L^\infty\left([0,T];{\bf L}^2(\Omega)\right)$, such that
	\begin{equation}\label{weak convergence}
		\left\{
		\begin{array}{ll}
			\mathbf{v}_{m_l}\stackrel{*}{\rightharpoonup} \mathbf{v} \qquad &\text{in }  L^\infty\left([0,T];{\bf H}^2(\Omega)\cap {\bf H}^1_0(\Omega)\right), \\
			\frac{\partial\mathbf{v}_{m_l}}{\partial t}\stackrel{*}{\rightharpoonup} \frac{\partial\mathbf{v}}{\partial t} \qquad &\text{in }  L^\infty\left([0,T];{\bf L}^2(\Omega)\right),
		\end{array}
		\right.
	\end{equation}
where $\stackrel{*}{\rightharpoonup}$ denotes the weak- * convergence.

Next, we demonstrate that $\mathbf{v}$ is a solution of \cref{weak problem}. Fix an integer $N$, and select a function $\mathbf{w}\in C^1\left({\bf H}_0^1(\Omega)\right)$ of the form
\begin{equation}\label{hanshu1}
	\mathbf{w}(t)=\sum_{k=1}^{N}\tilde d_k(t)\bm{\varphi}_k,
\end{equation}
where $\tilde d_k(t)\in C^1([0,T])$ are given  functions. Then from   \cref{appcoe3}
  we have
\begin{equation}\label{youxianxingshi}
	\int_{0}^{T}\langle\frac{\partial\mathbf{v}_m}{\partial t},\mathbf{w}\rangle+a(\mathbf{v}_m,\mathbf{w}) \, dt-\int_{0}^{T}\int_{0}^{t}\beta(t-s)b(\mathbf{v}_m(s),\mathbf{w})\, ds \,dt=\int_{0}^{T}\langle \mathbf{F},\mathbf{w}\rangle \, dt.
\end{equation}
Take the limit as $m\rightarrow\infty$ to obtain
\begin{equation}\label{limitxingshi}
	\int_{0}^{T}\langle\frac{\partial\mathbf{v}}{\partial t},\mathbf{w}\rangle+a(\mathbf{v},\mathbf{w}) \, dt-\int_{0}^{T}\int_{0}^{t}\beta(t-s)b(\mathbf{v}(s),\mathbf{w})\, ds \, dt=\int_{0}^{T}\langle \mathbf{F},\mathbf{w}\rangle \, dt.
\end{equation}
Note that the above equality holds for all $\mathbf{w}\in L^1\left([0,T];{\bf H}_0^1(\Omega)\right)$, since functions of the form given in \cref{hanshu1} are dense in $L^1\left([0,T];{\bf H}_0^1(\Omega)\right)$. Hence, we see that $\mathbf{v}$ satisfies the first equation of \cref{weak problem}.

 Furthermore, we need to show that $\mathbf{v}$ satisfies the initial condition. Notice that \cref{limitxingshi,youxianxingshi} hold for any $\mathbf{w}\in C^1\left([0,T];{\bf H}_0^1(\Omega)\right)$ with $\mathbf{w}(T)=0$, then  apply  integration by parts to get
\begin{equation}\label{342}
	\begin{aligned}
		&\int_{0}^{T}\left(-\langle\mathbf{v},\frac{\partial\mathbf{w}}{\partial t}\rangle+a(\mathbf{v},\mathbf{w}) \,\right) dt-\int_{0}^{T}\int_{0}^{t}\beta(t-s)b(\mathbf{v}(s),\mathbf{w})\, ds \,dt\\
		=&\int_{0}^{T}\langle \mathbf{F},\mathbf{w}\rangle \, dt-\langle\mathbf{v}(0),\mathbf{w}(0)\rangle
	\end{aligned}
\end{equation}
and 
\begin{equation}\label{343}
	\begin{aligned}
		&\int_{0}^{T}\left(-\langle\mathbf{v}_{m},\frac{\partial\mathbf{w}}{\partial t}\rangle+a(\mathbf{v}_m,\mathbf{w}) \, \right)dt-\int_{0}^{T}\int_{0}^{t}\beta(t-s)b(\mathbf{v}_m(s),\mathbf{w})\,ds \,dt\\
		=&\int_{0}^{T}\langle \mathbf{F},\mathbf{w}\rangle \, dt-\langle\mathbf{v}_m(0),\mathbf{w}(0)\rangle.
	\end{aligned}
\end{equation}
Letting $m\rightarrow\infty$ in \cref{343} and using \cref{weak convergence},  we get
\begin{align*}
	&\int_{0}^{T}\langle\mathbf{v},\frac{\partial\mathbf{w}}{\partial t}\rangle+a(\mathbf{v},\mathbf{w}) \, dt-\int_{0}^{T}\int_{0}^{t}\beta(t-s)b(\mathbf{v}(s),\mathbf{w})\, ds \,dt\\
	=&\int_{0}^{T}\langle \mathbf{F},\mathbf{w}\rangle \, dt-\langle\mathbf{v}_0,\mathbf{w}(0)\rangle.
\end{align*}
which plus \cref{342} yields $\mathbf{v}(0)=\mathbf{v}_0$.  Therefore, $\mathbf{v}$ is the solution of weak problem \cref{weak problem}.


 By   \cref{finitedimenregu},  taking $ m \to \infty $ in  \cref{energyestimates} gives the   regularity result  
  \cref{weaksolure}. 

The uniqueness of the weak solution follows from \cref{weaksolure} directly, since $ \mathbf{F} =0$ and $ \mathbf{v}_0 = 0 $ imply  $ \mathbf{v} = 0 $. 

%
%
%
\end{proof}

\begin{remark}
	Note that   higher regularity of the solution of the weak problem \cref{weak problem} can be obtained similarly under higher regularity assumptions of the initial data $\mathbf{v}_0$ and the right term $\mathbf{F}$. 
\end{remark}

\section{Semi-discrete finite element method}

In this section, we consider the semi-discrete finite element discretization of the weak problem \cref{weak problem}. 

\subsection{Semi-discrete scheme}  
Let $\mathcal{T}_h=\bigcup\{K\}$ be a shape regular partition of  $\Omega$ consisting of triangles/tetrahedrons or rectangles/cuboids.  For any $K\in \mathcal{T}_h$, let $h_K$ denote its diameter, and set $h:=\mathop{\mathrm{max}}_{K\in\mathcal{T}_h} h_K$. 

Let $V_h\subset {\bf H}_0^1(\Omega)$ be a   linear/bilinear/trilinear finite element space  satisfying 
\begin{equation}\label{finitedimensionas}
	\inf_{\bm{\chi}\in V_h}\left\{\left\|\mathbf{w}-\bm{\chi}\right\|+h\left\|\mathbf{w}-\bm{\chi}\right\|_1\right\}\leq Ch^2\|\mathbf{w}\|_2,\qquad \forall\ \mathbf{w}\in {\bf H}^2(\Omega)\cap {\bf H}_0^1(\Omega).
\end{equation}
Furthermore, the following inverse inequality holds: for any $\mathbf{w}_h\in V_h$, there holds
\begin{equation}\label{inverseassum}
	\|\mathbf{w}_h\|_1\leq Ch^{-1}\|\mathbf{w}_h\|.
\end{equation} 

Then the semi-discrete finite element scheme reads: For $t\in (0,T]$, find $\mathbf{v}_h(t)\in V_h$ such that
\begin{equation}\label{semidiscrete}
	\left\{
	\begin{array}{lll}
		\langle\frac{\partial\mathbf{v}_h}{\partial t},\mathbf{w}_h\rangle+a(\mathbf{v}_h,\mathbf{w}_h)-\displaystyle\int_{0}^{t}\beta(t-s)b(\mathbf{v}_h(s),\mathbf{w}_h) \, ds=\langle \mathbf{F},\mathbf{w}_h\rangle, \ \forall\ \mathbf{w}_h\in V_h, \\ 
		\mathbf{v}_h(0)=\mathbf{v}_{0,h},
	\end{array}
	\right.
\end{equation}
where $\mathbf{v}_{0,h}=\Pi_h\mathbf{v}_0\in V_h$ with $\Pi_h$ being  the Ritz-Volterra projection operator defined in \cref{projections-RV}. Note that there holds (cf. \cref{RVprojection})
\begin{equation}\label{initialassu}
	\left\|\Pi_h\mathbf{v}_0-\mathbf{v}_0\right\|\leq Ch^2\left\|\mathbf{v}_0\right \|_2.
\end{equation} 

 In light of \cref{solutionofintegrodifferential} we easily obtain the following conclusion:
\begin{lemma}
	Under Assumption \ref{regularitycond0} the semi-discrete scheme \cref{semidiscrete} admits a unique solution.
\end{lemma}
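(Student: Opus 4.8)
The plan is to reduce the semi-discrete scheme \cref{semidiscrete} to a finite-dimensional linear Volterra integro-differential system and then apply \cref{solutionofintegrodifferential}. First I would fix a basis $\{\bm{\psi}_i\}_{i=1}^{N_h}$ of $V_h$, with $N_h:=\dim V_h$, and expand the unknown as $\mathbf{v}_h(t)=\sum_{j=1}^{N_h}\alpha_j(t)\bm{\psi}_j$. Substituting this into \cref{semidiscrete} and choosing $\mathbf{w}_h=\bm{\psi}_i$ for $i=1,\dots,N_h$ turns the scheme into
\begin{equation*}
  \mathbb{M}\bm{\alpha}'(t)+\mathbb{A}_h\bm{\alpha}(t)-\int_{0}^{t}\beta(t-s)\,\mathbb{B}_h\bm{\alpha}(s)\,ds=\mathbf{g}(t),\qquad \bm{\alpha}(0)=\bm{\alpha}_0,
\end{equation*}
where $\mathbb{M}_{ij}=\langle\bm{\psi}_j,\bm{\psi}_i\rangle$, $(\mathbb{A}_h)_{ij}=a(\bm{\psi}_j,\bm{\psi}_i)$, $(\mathbb{B}_h)_{ij}=b(\bm{\psi}_j,\bm{\psi}_i)$, $g_i(t)=\langle\mathbf{F}(t),\bm{\psi}_i\rangle$, and $\bm{\alpha}_0$ is the coordinate vector of $\mathbf{v}_{0,h}=\Pi_h\mathbf{v}_0$.

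Since $\mathbb{M}$ is the Gram matrix of a linearly independent family, it is symmetric positive definite and hence invertible; multiplying through by $\mathbb{M}^{-1}$ puts the coefficient system in exactly the normalized form \cref{volterraintediff-eq}, with $\mathbf{z}=\bm{\alpha}$, $M_{\gamma_1}=\mathbb{M}^{-1}\mathbb{A}_h$, $M_{\gamma_2}=\mathbb{M}^{-1}\mathbb{B}_h$, $\mathbf{G}(t)=\beta(t)I_{N_h}$ and right-hand side $\mathbb{M}^{-1}\mathbf{g}(t)$. It then remains to verify the hypotheses of \cref{solutionofintegrodifferential}, i.e.\ that the entries of $\mathbf{G}$ and of the right-hand side lie in $C^0[0,T]\cap C^1(0,T]$. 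For the right-hand side this is immediate from \cref{regularitycond0}, since $\mathbf{F}\in C^1({\bf L}^2(\Omega))$ forces $g_i\in C^1[0,T]$. For $\mathbf{G}$ it suffices to observe that $\beta(t)=E_\alpha(-(t/\tau_\sigma)^\alpha)$ is continuous on $[0,T]$ — because $E_\alpha$ is entire and $\beta(0)=1$ — and continuously differentiable on $(0,T]$, with $\beta'(t)=-\tau_\sigma^{-\alpha}t^{\alpha-1}E_{\alpha,\alpha}(-(t/\tau_\sigma)^\alpha)$, which is continuous on $(0,T]$ by the analyticity of $E_{\alpha,\alpha}$ (this is the same computation as in the proof of \cref{ML-t-bound}).

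With these verifications in hand, \cref{solutionofintegrodifferential} produces a unique $\bm{\alpha}$ with each $\alpha_i\in C^1[0,T]\cap C^2(0,T]$, and then $\mathbf{v}_h(t)=\sum_{j=1}^{N_h}\alpha_j(t)\bm{\psi}_j$ is the unique solution of \cref{semidiscrete}, inheriting that temporal regularity and being independent of the basis chosen. I do not expect any genuine obstacle: the only two points requiring a word of care are the invertibility of the mass matrix $\mathbb{M}$ — needed to cast the coefficient system in the precise form demanded by \cref{solutionofintegrodifferential} — and the regularity of the kernel $\beta$ near $t=0$, both of which are routine.
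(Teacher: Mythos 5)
Your proposal is correct and follows exactly the route the paper intends: the paper gives no written proof beyond the remark that the result follows ``in light of'' \cref{solutionofintegrodifferential}, and your reduction to the coefficient system via the invertible mass matrix, followed by the verification that $\beta$ and the right-hand side satisfy the $C^0[0,T]\cap C^1(0,T]$ hypotheses, is precisely the omitted standard argument. No issues.
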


\subsection{Ritz-Volterra projection}
To carry out the semi-discrete error estimation,  we introduce the standard Ritz projection $R_h$ and the Ritz-Volterra projection $\Pi_h$ (cf.  \cite{cannon1990priori}) as follows: for   $t\in (0,T]$ and  $\mathbf{w}(t)\in {\bf H}_0^1(\Omega)$, 
\begin{equation}\label{projections-R}
		a(R_h\mathbf{w}-\mathbf{w},\bm{\chi})=0, \qquad\ \forall\ \bm{\chi}\in V_h,
\end{equation}
\begin{equation}\label{projections-RV}
		a((\Pi_h\mathbf{w}-\mathbf{w})(t),\bm{\chi})=\int_0^t\beta(t-s)b((\Pi_h\mathbf{w}-\mathbf{w})(s),\bm{\chi}) \, ds,  \ \forall\ \bm{\chi}\in V_h.
\end{equation}

For the Ritz projection $R_h$, under the assumption \cref{finitedimensionas} there holds (cf. \cite{Ciarlet2002})
\begin{equation}\label{est-Rh}
	\|R_h\mathbf{w}-\mathbf{w}\|+h\|R_h\mathbf{w}-\mathbf{w}\|_1\leq Ch^2\|\mathbf{w}\|_2,\quad \forall\ \mathbf{w}\in {\bf H}^2(\Omega)\cap {\bf H}_0^1(\Omega).
\end{equation}

Based on the estimate \cref{est-Rh},     \cref{RVprojection,RVprojection_t}  below  give error estimates for the Ritz-Volterra projection $\Pi_h$. 

%

\begin{lemma}\label{RVprojection}
	For any $   \mathbf{w}\in C^0 ({\bf H}^2(\Omega)\cap {\bf H}_0^1(\Omega))$ there holds 
	\begin{equation}\label{esti-RV}
		\|\Pi_h\mathbf{w}-\mathbf{w}\|_{L^\infty({\bf L}^2)}+h\|\Pi_h\mathbf{w}-\mathbf{w}\|_{L^\infty({\bf H}^1)}\leq Ch^2\|\mathbf{w}\|_{L^\infty({\bf H}^2)}.
	\end{equation}
\end{lemma}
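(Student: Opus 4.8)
The plan is to use the classical splitting for Ritz--Volterra projections (cf.\ \cite{cannon1990priori}): write $\mathbf{e}:=\Pi_h\mathbf{w}-\mathbf{w}=\bm{\theta}+\bm{\rho}$ with $\bm{\theta}:=\Pi_h\mathbf{w}-R_h\mathbf{w}\in V_h$ and $\bm{\rho}:=R_h\mathbf{w}-\mathbf{w}$. The part $\bm{\rho}$ is controlled directly by the Ritz estimate \eqref{est-Rh}, so it suffices to bound $\bm{\theta}$ in $L^\infty({\bf L}^2)$ and in $L^\infty({\bf H}^1)$. Subtracting the Ritz identity \eqref{projections-R} from the defining relation \eqref{projections-RV} gives, for every $\bm{\chi}\in V_h$ and $t\in(0,T]$,
$$a(\bm{\theta}(t),\bm{\chi})=\int_0^t\beta(t-s)\,b(\mathbf{e}(s),\bm{\chi})\,ds .$$
(Existence, uniqueness and continuity in time of $\Pi_h\mathbf{w}$, hence of $\bm{\theta}$, follow because, after inverting the symmetric positive definite form $a$ on the finite-dimensional space $V_h$, the above is a linear Volterra equation of the second kind with the continuous kernel $\beta$; cf.\ \cref{solutioncontinuous}. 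For the estimate itself only $\mathbf{w}\in L^\infty({\bf H}^2)$ is actually used.)

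For the ${\bf H}^1$ bound, take $\bm{\chi}=\bm{\theta}(t)$ and use the coercivity of $a$ in \eqref{a-property}, the boundedness of $b$ in \eqref{bounded-property}, and the elementary estimate $0\le\beta(t)\le 1$ implied by \eqref{MLbound}; after dividing by $\|\bm{\theta}(t)\|_1$ this yields $\|\bm{\theta}(t)\|_1\le C\int_0^t\bigl(\|\bm{\theta}(s)\|_1+\|\bm{\rho}(s)\|_1\bigr)\,ds$, whence Gronwall's inequality together with \eqref{est-Rh} gives $\|\bm{\theta}\|_{L^\infty({\bf H}^1)}\le Ch\|\mathbf{w}\|_{L^\infty({\bf H}^2)}$. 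Combined with \eqref{est-Rh} this already establishes the ${\bf H}^1$ half of \eqref{esti-RV}.

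For the ${\bf L}^2$ bound I would run an Aubin--Nitsche duality argument. Fix $t$ and let $\bm{\Phi}\in{\bf H}^2(\Omega)\cap{\bf H}_0^1(\Omega)$ solve $\A\bm{\Phi}=\bm{\theta}(t)$, so that $a(\mathbf{v},\bm{\Phi})=\langle\mathbf{v},\bm{\theta}(t)\rangle$ for all $\mathbf{v}\in{\bf H}_0^1(\Omega)$ and $\|\bm{\Phi}\|_2\le C\|\bm{\theta}(t)\|$ by \eqref{regu-ineq}. Writing $\|\bm{\theta}(t)\|^2=a(\bm{\theta}(t),\bm{\Phi})=a(\bm{\theta}(t),\bm{\Phi}-R_h\bm{\Phi})+a(\bm{\theta}(t),R_h\bm{\Phi})$, one bounds the first term by $Ch\|\bm{\theta}(t)\|_1\|\bm{\theta}(t)\|$ via \eqref{bounded-property} and \eqref{est-Rh}; for the second, inserting $\bm{\chi}=R_h\bm{\Phi}\in V_h$ into the $\bm{\theta}$-equation and splitting $R_h\bm{\Phi}=(R_h\bm{\Phi}-\bm{\Phi})+\bm{\Phi}$ leaves the two integrals $\int_0^t\beta(t-s)b(\mathbf{e}(s),R_h\bm{\Phi}-\bm{\Phi})\,ds$ and $\int_0^t\beta(t-s)b(\mathbf{e}(s),\bm{\Phi})\,ds$. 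The first is handled, as in the ${\bf H}^1$ step, by \eqref{bounded-property}, \eqref{est-Rh} and the ${\bf H}^1$ bound just proved. The crucial point is the second integral: since $\mathbf{e}(s)\in{\bf H}_0^1(\Omega)$ and $\bm{\Phi}\in{\bf H}^2(\Omega)$, integration by parts (using the symmetry of the tensor $\mathbb{B}$ and the operator $\B$ of \eqref{operatordefB}) gives $b(\mathbf{e}(s),\bm{\Phi})=\langle\mathbf{e}(s),\B\bm{\Phi}\rangle$, hence $|b(\mathbf{e}(s),\bm{\Phi})|\le C\|\mathbf{e}(s)\|\,\|\bm{\Phi}\|_2\le C\|\mathbf{e}(s)\|\,\|\bm{\theta}(t)\|$, so that only the ${\bf L}^2$ norm of $\mathbf{e}(s)$ enters. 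Collecting the terms, dividing by $\|\bm{\theta}(t)\|$, and inserting the ${\bf H}^1$ bound for $\bm{\theta}$ together with \eqref{est-Rh} for $\bm{\rho}$, we reach $\|\bm{\theta}(t)\|\le Ch^2\|\mathbf{w}\|_{L^\infty({\bf H}^2)}+C\int_0^t\|\bm{\theta}(s)\|\,ds$; a final application of Gronwall's inequality gives $\|\bm{\theta}\|_{L^\infty({\bf L}^2)}\le Ch^2\|\mathbf{w}\|_{L^\infty({\bf H}^2)}$, and adding \eqref{est-Rh} completes the proof of \eqref{esti-RV}.

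The main obstacle is the memory term $\int_0^t\beta(t-s)b(\mathbf{e}(s),\bm{\Phi})\,ds$ in the duality step: a crude estimate via \eqref{bounded-property} only produces an $O(h)$ contribution, and one must shift a derivative onto the dual function $\bm{\Phi}$ by integration by parts so that merely $\|\mathbf{e}(s)\|$, rather than $\|\mathbf{e}(s)\|_1$, appears, after which the inequality closes by Gronwall. Everything else — the splitting, the coercivity and boundedness of $a$ and $b$, the uniform bound on $\beta$, and the two Gronwall applications — is routine.
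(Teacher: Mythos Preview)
Your proof is correct and follows essentially the same route as the paper: the Ritz splitting $\Pi_h\mathbf{w}-\mathbf{w}=(\Pi_h\mathbf{w}-R_h\mathbf{w})+(R_h\mathbf{w}-\mathbf{w})$, coercivity of $a$ for the ${\bf H}^1$ bound, an Aubin--Nitsche duality with the key integration-by-parts step $b(\mathbf{e},\bm{\Phi})=\langle\mathbf{e},\B\bm{\Phi}\rangle$ for the ${\bf L}^2$ bound, and a Gronwall argument to close. The only notable difference is that the paper bounds $\beta(t)\le C t^{-\alpha}$ and invokes the weakly singular Gronwall inequality (\cref{fuzhuyinli2}), whereas you use the simpler uniform bound $0\le\beta\le1$ and the ordinary Gronwall inequality; your variant is slightly more elementary and equally valid here.
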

\begin{proof}
	From \cref{a-property,projections-R,projections-RV,MLbound}  it follows that
\begin{equation}\nonumber
	\begin{aligned}
		C\|\Pi_h\mathbf{w}-R_h\mathbf{w}\|_1^2&\leq a(\Pi_h\mathbf{w}-R_h\mathbf{w},\Pi_h\mathbf{w}-R_h\mathbf{w})=a(\Pi_h\mathbf{w}-\mathbf{w},\Pi_h\mathbf{w}-R_h\mathbf{w}) \\
		&=\int_0^t\beta(t-s)b((\Pi_h\mathbf{w}-\mathbf{w})(s),\Pi_h\mathbf{w}-R_h\mathbf{w}) \, ds\\
		&\leq C\|\Pi_h\mathbf{w}-R_h\mathbf{w}\|_1\cdot\int_{0}^{t}(t-s)^{-\alpha}\|(\Pi_h\mathbf{w}-\mathbf{w})(s)\|_1 \, ds.
	\end{aligned}
\end{equation}
Hence, we deduce
\begin{equation}\nonumber
	\begin{aligned}
		\|\Pi_h\mathbf{w}-\mathbf{w}\|_1&\leq \|\Pi_h\mathbf{w}-R_h\mathbf{w}\|_1+\|R_h\mathbf{w}-\mathbf{w}\|_1\\
		&\leq \|R_h\mathbf{w}-\mathbf{w}\|_1+C\int_{0}^{t}(t-s)^{-\alpha}\|(\Pi_h\mathbf{w}-\mathbf{w})(s)\|_1 \, ds.
	\end{aligned}
\end{equation}
By virtue of \cref{fuzhuyinli2,est-Rh} we further obtain
\begin{equation}\label{est4.8}
	\begin{aligned}
		\|\Pi_h\mathbf{w}-\mathbf{w}\|_1&\leq\|R_h\mathbf{w}-\mathbf{w}\|_1+C\int_{0}^{t}(t-s)^{-\alpha}\|(R_h\mathbf{w}-\mathbf{w})(s)\|_1 \, ds \\
		&\leq C\sup_{0\leq s\leq t}\|(R_h\mathbf{w}-\mathbf{w})(s)\|_1\\
		&\leq Ch\sup_{0\leq s\leq t}\|\mathbf{w}(s)\|_2.
	\end{aligned}
\end{equation}
Now we turn to estimate $\|\Pi_h\mathbf{w}-\mathbf{w}\|$ by a duality argument. 
Let $\bm{\phi}$ be the solution of the   equation
\begin{equation}\label{fuzhufangcheng}
	\left\{
	\begin{aligned}
		\A\bm{\phi}&=\Pi_h\mathbf{w}-\mathbf{w} \qquad \text{in}\ \Omega, \\
		\bm{\phi}&=0 \qquad \text{on}\ \partial\Omega.
	\end{aligned}
	\right.
\end{equation}
From \cref{regu-ineq} we see that 
\begin{equation}\label{regu-dual}
\|\bm{\phi}\|_2\leq C\|\Pi_h\mathbf{w}-\mathbf{w}\|.
\end{equation}

In view of  \cref{fuzhufangcheng,projections-RV} we have 
\begin{equation}\label{fuzhuequality1}
	\begin{aligned}
		\|\Pi_h\mathbf{w}-\mathbf{w}\|^2&=\langle\Pi_h\mathbf{w}-\mathbf{w},\A\bm{\phi}\rangle=a(\Pi_h\mathbf{w}-\mathbf{w},\bm{\phi})\\
		&=a(\Pi_h\mathbf{w}-\mathbf{w},\bm{\phi}-R_h\bm{\phi})+a(\Pi_h\mathbf{w}-\mathbf{w},R_h\bm{\phi})\\
		& =a(\Pi_h\mathbf{w}-\mathbf{w},\bm{\phi}-R_h\bm{\phi})+\int_0^t\beta(t-s)b((\Pi_h\mathbf{w}-\mathbf{w})(s),R_h\bm{\phi}) \, ds \\
		&=a(\Pi_h\mathbf{w}-\mathbf{w},\bm{\phi}-R_h\bm{\phi})+\int_0^t\beta(t-s)b((\Pi_h\mathbf{w}-\mathbf{w})(s),\bm{\phi}) \, ds\\
		&\hskip2cm +\int_0^t\beta(t-s)b((\Pi_h\mathbf{w}-\mathbf{w})(s),R_h\bm{\phi}-\bm{\phi}) \, ds\\
		&=a(\Pi_h\mathbf{w}-\mathbf{w},\bm{\phi}-R_h\bm{\phi})-\int_0^t\beta(t-s)\langle(\Pi_h\mathbf{w}-\mathbf{w})(s),\B\bm{\phi}\rangle \, ds\\
		&\hskip2cm +\int_0^t\beta(t-s)b((\Pi_h\mathbf{w}-\mathbf{w})(s),R_h\bm{\phi}-\bm{\phi}) \, ds,
	\end{aligned}
\end{equation}
which, together with   \cref{est-Rh,est4.8}, gives   
\begin{equation}
	\begin{aligned}
		&\|\Pi_h\mathbf{w}-\mathbf{w}\|^2\\
		& 
		\leq C\left(\|\Pi_h\mathbf{w}-\mathbf{w}\|_1\cdot\|\bm{\phi}-R_h\bm{\phi}\|_1+ \int_{0}^{t}(t-s)^{-\alpha} \|(\Pi_h\mathbf{w}-\mathbf{w})(s)\| \, ds \cdot \|\bm{\phi}\|_2 \right.  \\
		&\qquad   \left.+ \int_{0}^{t}(t-s)^{-\alpha} \|(\Pi_h\mathbf{w}-\mathbf{w})(s)\|_1 \, ds \cdot \|R_h\bm{\phi}-\bm{\phi}\|_1 \right) \\
		&\leq C\left\{h^2\|\bm{\phi}\|_2\sup_{0\leq s\leq t}\|\mathbf{w}(s)\|_2+\int_{0}^{t}(t-s)^{-\alpha}\|(\Pi_h\mathbf{w}-\mathbf{w})(s)\| \, ds\right\}.
	\end{aligned}
\end{equation}

Finally, using \cref{regu-dual,fuzhuyinli2}
 we immediately obtain
\begin{equation*}
	\|\Pi_h\mathbf{w}-\mathbf{w}\|\leq Ch^2\sup_{0\leq s\leq t}\|\mathbf{w}(s)\|_2\leq Ch^2\|\mathbf{w}\|_{L^\infty({\bf H}^2)},
\end{equation*}
which plus \cref{est4.8} leads to the desired estimate \cref{esti-RV}.
\end{proof}

\begin{lemma}\label{RVprojection_t}
	For any $   \mathbf{w}\in C^1 ({\bf H}^2(\Omega)\cap {\bf H}_0^1(\Omega))$ 
	 there holds
	\begin{equation}\label{estRV}
		\begin{aligned}
			\int_{0}^{t}\left(\left\|\frac{\partial(\Pi_h\mathbf{w}-\mathbf{w})}{\partial t}\right\|+h\left\|\frac{\partial(\Pi_h\mathbf{w}-\mathbf{w})}{\partial t}\right\|_1\right) \, ds\leq Ch^2\left(\left\|\mathbf{w}(0)\right \|_2+\int_{0}^{t}\left\|\frac{\partial\mathbf{w}}{\partial t}\right\|_2 \, ds\right).
		\end{aligned}
	\end{equation}
\end{lemma}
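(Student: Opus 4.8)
The plan is to transplant the duality-plus-Gronwall argument of \cref{RVprojection} to the time derivative, after differentiating the defining relation \cref{projections-RV} in a way that avoids the singular kernel $\beta'$. Write $e:=\Pi_h\mathbf{w}-\mathbf{w}$, $\eta:=R_h\mathbf{w}-\mathbf{w}$ and $\theta:=\Pi_h\mathbf{w}-R_h\mathbf{w}=e-\eta$. Since $R_h$ is linear and $t$-independent, $\partial_t\eta=R_h(\partial_t\mathbf{w})-\partial_t\mathbf{w}$, so \cref{est-Rh} gives $\|\partial_t\eta(s)\|+h\|\partial_t\eta(s)\|_1\le Ch^2\|\partial_t\mathbf{w}(s)\|_2$, and likewise $\|\eta(0)\|+h\|\eta(0)\|_1\le Ch^2\|\mathbf{w}(0)\|_2$; evaluating \cref{projections-RV} at $t=0$ shows $\theta(0)=0$ and $e(0)=\eta(0)$. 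Thus it suffices to estimate $\int_0^t(\|\partial_t\theta\|+h\|\partial_t\theta\|_1)\,ds$. Subtracting \cref{projections-R} from \cref{projections-RV} yields $a(\theta(t),\bm\chi)=\int_0^t\beta(t-s)\,b(e(s),\bm\chi)\,ds$ for all $\bm\chi\in V_h$. Rewriting the convolution as $\int_0^t\beta(\tau)\,b(e(t-\tau),\bm\chi)\,d\tau$ and differentiating in $t$ (permissible because, written in a basis, $\Pi_h\mathbf{w}$ solves a second-kind Volterra system with $C^1$ data, hence $\Pi_h\mathbf{w}\in C^1(V_h)$), and using $\beta(0)=1$, we obtain
\begin{equation}\label{dtheta-eqn}
	a(\partial_t\theta(t),\bm\chi)=\beta(t)\,b(\eta(0),\bm\chi)+\int_0^t\beta(t-s)\,b(\partial_t e(s),\bm\chi)\,ds,\qquad\forall\ \bm\chi\in V_h,
\end{equation}
with $\partial_t e=\partial_t\theta+\partial_t\eta$.

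For the $H^1$ part I would test \cref{dtheta-eqn} with $\bm\chi=\partial_t\theta(t)$, invoke the coercivity of $a$ in \cref{a-property}, the boundedness \cref{bounded-property}, and the kernel bound $0\le\beta(t-s)\le C(t-s)^{-\alpha}$ coming from \cref{MLbound}, to reach
\begin{equation}\nonumber
	\|\partial_t\theta(t)\|_1\le C\Bigl(\|\eta(0)\|_1+\int_0^t(t-s)^{-\alpha}\bigl(\|\partial_t\theta(s)\|_1+\|\partial_t\eta(s)\|_1\bigr)\,ds\Bigr).
\end{equation}
The weakly singular Gronwall inequality \cref{fuzhuyinli2} (treating the $\eta$ terms as data) followed by integration over $[0,t]$, using Fubini and $\int_r^t(s-r)^{-\alpha}\,ds\le C$, gives $\int_0^t\|\partial_t\theta\|_1\,ds\le C\bigl(\|\eta(0)\|_1+\int_0^t\|\partial_t\eta\|_1\,ds\bigr)\le Ch\bigl(\|\mathbf{w}(0)\|_2+\int_0^t\|\partial_t\mathbf{w}\|_2\,ds\bigr)$, which already controls the $h\|\partial_t\theta\|_1$ contribution.

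For the $L^2$ part I would repeat the duality trick of \cref{RVprojection}: for fixed $t$ let $\bm\phi$ solve $\A\bm\phi=\partial_t\theta(t)$ with $\bm\phi|_{\partial\Omega}=0$, so $\|\bm\phi\|_2\le C\|\partial_t\theta(t)\|$ by \cref{regu-ineq}; then $\|\partial_t\theta(t)\|^2=a(\partial_t\theta(t),\bm\phi-R_h\bm\phi)+a(\partial_t\theta(t),R_h\bm\phi)$. The first term is $\le Ch\|\partial_t\theta(t)\|_1\|\bm\phi\|_2$ by \cref{est-Rh}. For the second I put $\bm\chi=R_h\bm\phi$ into \cref{dtheta-eqn} and split $b(u,R_h\bm\phi)=\langle u,\B\bm\phi\rangle+b(u,R_h\bm\phi-\bm\phi)$, exactly as in \cref{fuzhuequality1}, trading the two extra powers of $h$ for an $L^2$ norm on $u$; with $\partial_t e=\partial_t\theta+\partial_t\eta$ and the kernel bound this leads, after dividing by $\|\partial_t\theta(t)\|\sim\|\bm\phi\|_2$, to
\begin{equation}\nonumber
	\|\partial_t\theta(t)\|\le Ch\|\partial_t\theta(t)\|_1+C\|\eta(0)\|+Ch\|\eta(0)\|_1+C\int_0^t(t-s)^{-\alpha}\bigl(\|\partial_t\theta(s)\|+h\|\partial_t\theta(s)\|_1+\|\partial_t\eta(s)\|+h\|\partial_t\eta(s)\|_1\bigr)\,ds.
\end{equation}
Treating everything except $\|\partial_t\theta(s)\|$ inside the integral as data, \cref{fuzhuyinli2} and integration over $[0,t]$ (Fubini once more) bound $\int_0^t\|\partial_t\theta\|\,ds$ by $Ch\int_0^t\|\partial_t\theta\|_1\,ds$ plus the $\eta$ data; substituting the $H^1$ bound of the previous step and \cref{est-Rh} yields $\int_0^t\|\partial_t\theta\|\,ds\le Ch^2\bigl(\|\mathbf{w}(0)\|_2+\int_0^t\|\partial_t\mathbf{w}\|_2\,ds\bigr)$. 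Adding the $\theta$ and $\eta$ estimates gives \cref{estRV}.

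The main obstacle is the $L^2$ step. Since $\partial_t\theta$ is only controlled in $H^1$ by the energy argument, recovering the full second order of accuracy forces a bootstrap through the duality argument together with the careful splitting of $b(\cdot,R_h\bm\phi)$; it is essential here that the lemma asks only for time-integrated norms, so that the weakly singular kernels produced in the two applications of \cref{fuzhuyinli2} can be absorbed by Fubini and the residual $h\int_0^t\|\partial_t\theta\|_1\,ds$ can be fed back from the $H^1$ estimate.
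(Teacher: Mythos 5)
Your proposal is correct and follows essentially the same route as the paper: differentiate the time-shifted form of \cref{projections-RV} to get an equation for $\partial_t(\Pi_h\mathbf{w}-\mathbf{w})$ with the nonsingular kernel $\beta$, run an energy estimate against $\partial_t(\Pi_h\mathbf{w}-R_h\mathbf{w})$ plus the weakly singular Gronwall inequality \cref{fuzhuyinli2} for the $H^1$ bound, and then repeat the duality argument of \cref{RVprojection} for the $L^2$ bound. The only differences are cosmetic (you use $\beta\le 1$ where the paper uses $\beta(t)\le Ct^{-\alpha}$ on the initial term, and you spell out the duality step that the paper only sketches), so no further comment is needed.
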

\begin{proof}
	Rewrite \cref{projections-RV} as
	\begin{equation}
		a(\Pi_h\mathbf{w}-\mathbf{w},\bm{\chi})=\int_0^t\beta(s)b((\Pi_h\mathbf{w}-\mathbf{w})(t-s),\bm{\chi}) \, ds, \qquad \forall\ \bm{\chi}\in V_h.
	\end{equation}
Taking the derivative with respect to $t$ for the above equation, we obtain
\begin{equation*}
	a(\frac{\partial(\Pi_h\mathbf{w}-\mathbf{w})}{\partial t}(t),\bm{\chi})=\beta(t)b(\Pi_h\mathbf{w}(0)-\mathbf{w}(0),\bm{\chi})+\int_0^t\beta(s)b(\frac{\partial(\Pi_h\mathbf{w}-\mathbf{w})}{\partial t}(t-s),\bm{\chi}) \, ds.
\end{equation*}
Thus, by \cref{a-property,projections-R,MLbound} we have
\begin{equation}\nonumber
	\begin{aligned}
		&C\left\|\frac{\partial(\Pi_h\mathbf{w}-R_h\mathbf{w})}{\partial t}\right\|_1^2\\
		\leq &a(\frac{\partial(\Pi_h\mathbf{w}-R_h\mathbf{w})}{\partial t},\frac{\partial(\Pi_h\mathbf{v}-R_h\mathbf{w})}{\partial t}) 
		=a(\frac{\partial(\Pi_h\mathbf{w}-\mathbf{w})}{\partial t},\frac{\partial(\Pi_h\mathbf{w}-R_h\mathbf{w})}{\partial t}) \\
		=&\beta(t)b(\Pi_h\mathbf{w}(0)-\mathbf{w}(0),\frac{\partial(\Pi_h\mathbf{w}-R_h\mathbf{w})}{\partial t})\\
		&\phantom{\beta(t)C C}+\int_0^t\beta(t-s)b(\frac{\partial(\Pi_h\mathbf{w}-\mathbf{w})}{\partial t}(s),\frac{\partial(\Pi_h\mathbf{w}-R_h\mathbf{w})}{\partial t}) \, ds\\
		\leq &C\left(t^{-\alpha}\|\Pi_h\mathbf{w}(0)-\mathbf{w}(0)\|_1+\int_{0}^{t}(t-s)^{-\alpha}\left\|\frac{\partial(\Pi_h\mathbf{w}-\mathbf{w})}{\partial t}(s)\right\|_1 \, ds\right)\left\|\frac{\partial(\Pi_h\mathbf{w}-R_h\mathbf{w})}{\partial t}\right\|_1\\
		\leq &C\left(t^{-\alpha}h\left\|\mathbf{w}(0)\right \|_2+\int_{0}^{t}(t-s)^{-\alpha}\left\|\frac{\partial(\Pi_h\mathbf{w}-\mathbf{w})}{\partial t}(s)\right\|_1 \, ds\right)\left\|\frac{\partial(\Pi_h\mathbf{w}-R_h\mathbf{w})}{\partial t}\right\|_1.
	\end{aligned}
\end{equation}
As a result, we get
\begin{equation}\nonumber
	\begin{aligned}
		\left\|\frac{\partial(\Pi_h\mathbf{w}-\mathbf{w})}{\partial t}\right\|_1&\leq\left\|\frac{\partial(\Pi_h\mathbf{w}-R_h\mathbf{w})}{\partial t}\right\|_1+\left\|\frac{\partial(R_h\mathbf{w}-\mathbf{w})}{\partial t}\right\|_1\\
		&\leq C\left\{h\left(t^{-\alpha}\left\|\mathbf{w}(0)\right \|_2+\left\|\frac{\partial\mathbf{w}}{\partial t}\right\|_2\right)+\int_{0}^{t}(t-s)^{-\alpha}\left\|\frac{\partial(\Pi_h\mathbf{w}-\mathbf{w})}{\partial t}(s)\right\|_1 \, ds\right\},
	\end{aligned}
\end{equation}
which plus \cref{fuzhuyinli2} yields
\begin{equation}
	\left\|\frac{\partial(\Pi_h\mathbf{w}-\mathbf{w})}{\partial t}\right\|_1\leq Ch\left\{t^{-\alpha}\left\|\mathbf{w}(0)\right \|_2+\left\|\frac{\partial\mathbf{w}}{\partial t}\right\|_2+\int_{0}^{t}(t-s)^{-\alpha}\left\|\frac{\partial\mathbf{w}}{\partial t}(s)\right\|_2 \, ds\right\}.
\end{equation}
Integrating the above inequality with respect to $t$, we get
\begin{equation}\label{est416}
	\begin{aligned}
		&\int_{0}^{t}\left\|\frac{\partial(\Pi_h\mathbf{w}-\mathbf{w})}{\partial t}\right\|_1 \, ds\\
		&\leq Ch\left\{\left\|\mathbf{w}(0)\right \|_2+\int_{0}^{t}\left\|\frac{\partial\mathbf{w}}{\partial t}\right\|_2 \, ds+\int_{0}^{t}\int_{0}^{s}(s-r)^{-\alpha}\|\frac{\partial\mathbf{w}}{\partial t}(r)\|_2 \,dr \,ds\right\}\\
		&\leq\ Ch\left\{\left\|\mathbf{w}(0)\right \|_2+\int_{0}^{t}\left\|\frac{\partial\mathbf{w}}{\partial t}\right\|_2 \, ds\right\}.
	\end{aligned}
\end{equation}

By following a similar routine of dual arguments as  in the proof of \cref{RVprojection}, we can  deduce the $L_2$ estimate
\begin{equation*}
	\begin{aligned}
		\int_{0}^{t}\left\|\frac{\partial(\Pi_h\mathbf{w}-\mathbf{w})}{\partial t}(s)\right\| \, ds&\leq C\left(h\int_{0}^{t}\left\|\frac{\partial(\Pi_h\mathbf{w}-\mathbf{w})}{\partial t}(s)\right\|_1 \, ds+h^2\left\|\mathbf{w}(0)\right \|_2\right)\\
		&\leq\ Ch^2\left\{\left\|\mathbf{w}(0)\right \|_2+\int_{0}^{t}\left\|\frac{\partial\mathbf{w}}{\partial t}\right\|_2 \, ds\right\}.
	\end{aligned}
\end{equation*}
This inequality, together with \cref{est416}, gives the desired estimate \cref{estRV}.
\end{proof}

\subsection{Error estimation}

By virtue of \cref{RVprojection,RVprojection_t}, we easily obtain the following error estimate for the semi-discrete finite element method: 
\begin{theorem}
	Let $\mathbf{v}$ and $\mathbf{v}_h$ be the solutions of the weak problem \cref{weak problem} and the semi-discrete scheme \cref{semidiscrete}, respectively. Then 
	\begin{equation}\label{semidiserror}
		\|\mathbf{v}-\mathbf{v}_h\|_{L^{\infty}({\bf L}^2)}+h\|\mathbf{v}-\mathbf{v}_h\|_{L^1({\bf H}^1)}\leq Ch^2\left(\left\|\mathbf{v}_0\right \|_2+\left\|\mathbf{v}\right\|_{L^\infty({\bf H}^2)}+\int_{0}^{T}\left\|\frac{\partial\mathbf{v}}{\partial t}\right\|_2 \, ds\right).
	\end{equation}
\end{theorem}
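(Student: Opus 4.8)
The plan is the classical error-splitting argument built around the Ritz--Volterra projection $\Pi_h$. I would write
\[
\mathbf{v}-\mathbf{v}_h=(\mathbf{v}-\Pi_h\mathbf{v})+(\Pi_h\mathbf{v}-\mathbf{v}_h)=:\bm{\rho}+\bm{\theta},
\]
so that $\bm{\rho}$ is controlled directly by the projection estimates \cref{RVprojection,RVprojection_t} (recalling $\mathbf{v}(0)=\mathbf{v}_0$), while $\bm{\theta}\in V_h$ is handled by an energy argument. First I would subtract \cref{semidiscrete} from \cref{weak problem}; the forcing terms cancel, and inserting the splitting moves the $\bm{\rho}$-terms to the right-hand side. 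The key observation is that by the defining property \cref{projections-RV} of $\Pi_h$ the terms $-a(\bm{\rho},\mathbf{w}_h)$ and $\int_0^t\beta(t-s)b(\bm{\rho}(s),\mathbf{w}_h)\,ds$ cancel exactly, leaving the error equation
\[
\Dual{\tfrac{\partial\bm{\theta}}{\partial t},\mathbf{w}_h}+a(\bm{\theta},\mathbf{w}_h)-\int_0^t\beta(t-s)b(\bm{\theta}(s),\mathbf{w}_h)\,ds=-\Dual{\tfrac{\partial\bm{\rho}}{\partial t},\mathbf{w}_h},\qquad\forall\,\mathbf{w}_h\in V_h,
\]
together with $\bm{\theta}(0)=\Pi_h\mathbf{v}_0-\mathbf{v}_{0,h}=0$.

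Next I would take $\mathbf{w}_h=\bm{\theta}$, integrate over $[0,t]$, and invoke the coercivity of $a(\cdot,\cdot)$ from \cref{a-property} to obtain
\[
\tfrac12\|\bm{\theta}(t)\|^2+C\int_0^t\|\bm{\theta}(s)\|_1^2\,ds\le\Snm{\int_0^t\!\!\int_0^s\beta(s-r)b(\bm{\theta}(r),\bm{\theta}(s))\,dr\,ds}+\int_0^t\Nm{\tfrac{\partial\bm{\rho}}{\partial t}(s)}\,\|\bm{\theta}(s)\|\,ds.
\]
The memory term is the crux: using the boundedness of $b$ from \cref{bounded-property} and applying \cref{fuzhuyinli-continuous} with $f=\|\bm{\theta}\|_1$ and $G=\beta$ (which is bounded and integrable on $[0,T]$ by \cref{MLbound}), I can absorb a term $\varepsilon\int_0^t\|\bm{\theta}\|_1^2\,ds$ into the coercivity term and be left with $C\int_0^t\snm{\beta(t-s)}\int_0^s\|\bm{\theta}(r)\|_1^2\,dr\,ds$. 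Bounding the forcing term by Young's inequality as $\big(\int_0^T\|\partial_t\bm{\rho}\|\,ds\big)\max_{[0,t]}\|\bm{\theta}\|$, then taking the maximum over $t\in[0,t^\ast]$ on both sides (the $L^2({\bf H}^1)$ accumulation being nondecreasing), yields an inequality of the form $\psi(t^\ast)\le b(t^\ast)+C\int_0^{t^\ast}\psi(s)\,ds$ with $\psi(t)=\int_0^t\|\bm{\theta}\|_1^2\,ds$ and $b$ nondecreasing, to which a Gronwall argument (cf. \cref{fuzhuyinli2}) applies. This gives $\|\bm{\theta}\|_{L^\infty({\bf L}^2)}+\|\bm{\theta}\|_{L^2({\bf H}^1)}\le C\int_0^T\|\partial_t\bm{\rho}\|\,ds$, hence $\le Ch^2\big(\|\mathbf{v}_0\|_2+\int_0^T\|\partial_t\mathbf{v}\|_2\,ds\big)$ by \cref{RVprojection_t}; the $L^1({\bf H}^1)$ bound follows from $\|\bm{\theta}\|_{L^1({\bf H}^1)}\le\sqrt{T}\,\|\bm{\theta}\|_{L^2({\bf H}^1)}$.

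Finally I would assemble the pieces: \cref{RVprojection} gives $\|\bm{\rho}\|_{L^\infty({\bf L}^2)}+h\|\bm{\rho}\|_{L^1({\bf H}^1)}\le Ch^2\|\mathbf{v}\|_{L^\infty({\bf H}^2)}$ (using $\|\bm{\rho}\|_{L^1({\bf H}^1)}\le T\|\bm{\rho}\|_{L^\infty({\bf H}^1)}$), and combining with the $\bm{\theta}$-bounds above and the triangle inequality yields \cref{semidiserror} (the extra power of $h$ in the $\bm{\theta}$-contribution to $h\|\cdot\|_{L^1({\bf H}^1)}$ being harmless). I expect the only genuinely delicate step to be the treatment of the double-convolution term together with the attendant Gronwall bookkeeping --- in particular choosing $\varepsilon$ small enough to be absorbed by coercivity, and applying the maximum-in-time trick \emph{before} invoking Gronwall so that the forcing term can be handled by Young's inequality --- while the rest is routine once the Ritz--Volterra cancellation is in place.
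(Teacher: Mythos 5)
Your proposal is correct and follows essentially the same route as the paper: the same splitting via the Ritz--Volterra projection, the same cancellation from \cref{projections-RV}, testing with $\bm{\theta}$, handling the double-convolution term with \cref{fuzhuyinli-continuous}, and closing with a Gronwall argument before invoking \cref{RVprojection,RVprojection_t}. The only (harmless) variation is that you apply \cref{fuzhuyinli-continuous} and Gronwall with the bounded kernel $G=\beta$ directly, whereas the paper first bounds $\beta(t-s)\leq C(t-s)^{-\alpha}$ via \cref{MLbound} and uses the weakly singular version \cref{fuzhuyinli2}.
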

\begin{proof}
	Denote $\bm{\delta}:=\Pi_h\mathbf{v}-\mathbf{v}_h$, then
	\begin{equation}\label{etadelta}
		\mathbf{v}-\mathbf{v}_h
		=\mathbf{v}-\Pi_h\mathbf{v}+\bm{\delta}.
	\end{equation}
Owing to \cref{RVprojection}, we only need to estimate $\bm{\delta}$. From 
\cref{weak problem}, 
\cref{semidiscrete} and 
\cref{projections-RV} it follows
\begin{equation*}
	\begin{aligned}
		&\ \ \frac{1}{2}\frac{d\|\bm{\delta}\|^2}{dt}+a(\bm{\delta},\bm{\delta})=\langle\frac{\partial\bm{\delta}}{\partial t},\bm{\delta}\rangle+a(\bm{\delta},\bm{\delta})\\
		=&\ \ \int_{0}^{t}\beta(t-s)b(\bm{\delta}(s),\bm{\delta}) \, ds+\langle\frac{\partial(\Pi_h\mathbf{v}-\mathbf{v})}{\partial t},\bm{\delta}\rangle\\
		\leq&\ \ C\int_{0}^{t}(t-s)^{-\alpha}\|\bm{\delta}(s)\|_1\cdot\|\bm{\delta}(t)\|_1 \, ds+\left\|\frac{\partial(\Pi_h\mathbf{v}-\mathbf{v})}{\partial t}\right\|\cdot\|\bm{\delta}\|.
	\end{aligned}
\end{equation*}
Integrating the above inequality with respect to $t$ from $0$ to $r\in (0,T]$  and using \cref{fuzhuyinli-continuous},  we obtain
\begin{equation*}
	\begin{aligned}
		&\ \ \|\bm{\delta}(r)\|^2+\int_{0}^{r}\|\bm{\delta}(t)\|_1^2 \, dt\\
		\leq &\ \  C\left\{\|\bm{\delta}(0)\|^2+\int_{0}^{r}\left\|\frac{\partial(\Pi_h\mathbf{v}-\mathbf{v})(t)}{\partial t}\right\|\cdot\|\bm{\delta}(t)\| \, dt+\int_{0}^{r}(r-t)^{-\alpha}\int_{0}^{t}\|\bm{\delta}(s)\|_1^2 \,ds \,dt\right\}\\
		\leq &\ \  C\left\{\|\bm{\delta}(0)\|^2+\int_{0}^{r}\left\|\frac{\partial(\Pi_h\mathbf{v}-\mathbf{v})(t)}{\partial t}\right\|\cdot\|\bm{\delta}(t)\| \, dt+\int_{0}^{r}(r-t)^{-\alpha}\int_{0}^{r}\|\bm{\delta}(s)\|_1^2 \,ds \,dt\right\}.
	\end{aligned}
\end{equation*}
Thus, applying \cref{fuzhuyinli2}  we deduce that
\begin{equation*}
	\begin{aligned}
		&\|\bm{\delta}(r)\|^2+\int_{0}^{r}\|\bm{\delta}(t)\|_1^2 \, dt\leq C\left\{\|\bm{\delta}(0)\|^2+\int_{0}^{r}\left\|\frac{\partial(\Pi_h\mathbf{v}-\mathbf{v})(t)}{\partial t}\right\|\cdot\|\bm{\delta}(t)\| \, dt\right\}
	\end{aligned}
\end{equation*}
for any $r\in (0,T]$, 
which further leads to 
 $$\|\bm{\delta}\|_{L^{\infty}({\bf L}^2)}\leq C\left\{\|\bm{\delta}(0)\|+\int_{0}^{T}\left\|\frac{\partial(\Pi_h\mathbf{v}-\mathbf{v})}{\partial t}\right\| \, dt\right\}. $$
As a result, the desired estimate \cref{semidiserror} follows from \cref{etadelta,initialassu,RVprojection,RVprojection_t}.

\end{proof}

\section{Fast fully discrete finite element method }

\subsection{Fully discrete scheme}
Let $0=t_0<t_1<...<t_{\mathrm{N}}=T$ be a uniform division of the time interval $[0,T]$,  with $t_i=i\Delta t \ (i=0,1,...,N)$   and  the time step size $\Delta t:=\frac{T}{\mathrm{N}}$. 
For any function $\varphi(t)$, we set
\begin{align*}
	\varphi^n:=\varphi(t_n). 
\end{align*}

For  the time derivative  $\frac{\partial\mathbf{v}_h}{\partial t}$ in the semi-discrete scheme \cref{semidiscrete}, we apply the backward Euler difference scheme to approximate it, i.e.
$$ \frac{\partial\mathbf{v}_h}{\partial t}\approx \frac{\mathbf{v}_h^n-\mathbf{v}_h^{n-1}}{\Delta t}.$$
 Noticing    in   \cref{semidiscrete}   the   term  
 $$\int_{0}^{t}\beta(t-s)b(\mathbf{v}_h(s),\mathbf{w}_h) \, ds=b(\int_{0}^{t}\beta(t-s)\mathbf{v}_h(s)\, ds,\mathbf{w}_h), $$
 we adopt  
 the SOE approximation  \cref{SOEapp} for $\beta(t)=E_{\alpha}(-t^\alpha)$ and  evaluate  the time integral term as follows:
\begin{align*}
	&\int_{0}^{t_n}\beta(t_n-s)\mathbf{v}_h(s) \, ds \\
	\approx&\sum_{j=1}^{N_{exp}}\int_{0}^{t_n}b_je^{-\frac{a_j}{\tau_\sigma}(t_n-s)}\mathbf{v}_h(s) \, ds\\
	=&\sum_{j=1}^{N_{exp}}\left(e^{-\frac{a_j}{\tau_\sigma}\Delta t}\int_{0}^{t_{n-1}}b_je^{-\frac{a_j}{\tau_\sigma}(t_{n-1}-s)}\mathbf{v}_h(s) \, ds+\int_{t_{n-1}}^{t_n}b_je^{-\frac{a_j}{\tau_\sigma}(t_n-s)}\mathbf{v}_h(s) \, ds\right)\\
	\approx&\sum_{j=1}^{N_{exp}}e^{-\frac{a_j}{\tau_\sigma}\Delta t}\int_{0}^{t_{n-1}}b_je^{-\frac{a_j}{\tau_\sigma}(t_{n-1}-s)}\mathbf{v}_h(s) \, ds+\frac{b_j\tau_\sigma}{a_j}(1-e^{-\frac{a_j}{\tau_\sigma}\Delta t})\mathbf{v}_h^{n-1}.
\end{align*} 
Define the memory variables $H_j(\mathbf{v}_h^n)$ recursively by 
\begin{equation}\label{Memorydef}
\left\{	\begin{aligned}
		&H_j(\mathbf{v}_h^n)=e^{-\frac{a_j}{\tau_\sigma}\Delta t}H_j(\mathbf{v}_h^{n-1})+\frac{b_j\tau_\sigma}{a_j}(1-e^{-\frac{a_j}{\tau_\sigma}\Delta t})\mathbf{v}_h^{n-1}, \quad n=1,2, \cdots,N,\\
	&H_j(\mathbf{v}_h^0)=\mathbf{0},	
	\end{aligned}
	\right.
\end{equation}
then we obtain 
the following fast fully  discrete scheme: For each $n=1,...,N$, find $\mathbf{v}_h^n\in V_h$ such that 
\begin{equation}\label{fulldiscrete}
	\left\{
	\begin{array}{lll}
		\langle\frac{\mathbf{v}_h^n-\mathbf{v}_h^{n-1}}{\Delta t},\mathbf{w}_h\rangle+a(\mathbf{v}_h^n,\mathbf{w}_h)-\sum_{j=1}^{N_{exp}}b(H_j(\mathbf{v}_h^n),\mathbf{w}_h)=\langle \mathbf{F}^n,\mathbf{w}_h\rangle, \ \forall\ \mathbf{w}_h\in V_h, \\ 
		\mathbf{v}_h^0=\mathbf{v}_{0,h}.
	\end{array}
	\right.
\end{equation}

By \cref{Memorydef}  we have
\begin{align*}
	&\sum_{j=1}^{N_{exp}}b(H_j(\mathbf{v}_h^n),\mathbf{w}_h)\\
	=&\sum_{j=1}^{N_{exp}}e^{-\frac{a_j}{\tau_\sigma}\Delta t}b(H_j(\mathbf{v}_h^{n-1}),\mathbf{w}_h)+\sum_{j=1}^{N_{exp}}\frac{b_j\tau_\sigma}{a_j}(1-e^{-\frac{a_j}{\tau_\sigma}\Delta t})b(\mathbf{v}_h^{n-1},\mathbf{w}_h)\\
	& \hphantom{{}={}} \vdots \quad \text{(repeat the recursion $n-2$ times)} \\
	=&\sum_{i=0}^{n-1}\theta_{n-i}b(\mathbf{v}_h^i,\mathbf{w}_h)=\sum_{i=0}^{n-1}\theta_{n-i}b(\mathbf{v}_h^i,\mathbf{w}_h),
\end{align*}
where
\begin{equation}\label{thetadef}
	\theta_i=\sum_{j=1}^{N_{exp}}\frac{b_j\tau_\sigma}{a_j}(e^{-\frac{(i-1)\Delta ta_j}{\tau_\sigma}}-e^{-\frac{i\Delta ta_j}{\tau_\sigma}}), \quad i=1, 2,\cdots, n.
\end{equation}
Therefore, the scheme \cref{fulldiscrete} can be rewritten as follows: For each $n=1,...,N$, find $\mathbf{v}_h^n\in V_h$ such that 
\begin{equation}\label{equi-fulldiscrete}
	\left\{
	\begin{array}{lll}
		\langle\frac{\mathbf{v}_h^n-\mathbf{v}_h^{n-1}}{\Delta t},\mathbf{w}_h\rangle+a(\mathbf{v}_h^n,\mathbf{w}_h)-\sum_{i=0}^{n-1}\theta_{n-i}b(\mathbf{v}_h^i,\mathbf{w}_h)=\langle \mathbf{F}^n,\mathbf{w}_h\rangle, \ \forall\ \mathbf{w}_h\in V_h, \\ 
		\mathbf{v}_h^0=\mathbf{v}_{0,h}.
	\end{array}
	\right.
\end{equation}
It is easy to see that the above scheme 
has a unique solution for each $n$. As a result, we have the following result:
\begin{lemma}
	The fully discrete scheme \cref{fulldiscrete} admits a unique solution for $1\leq n\leq N$.
\end{lemma}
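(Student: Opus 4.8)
The plan is to work with the equivalent formulation \cref{equi-fulldiscrete}, whose equivalence to \cref{fulldiscrete} has just been established through the recursion \cref{Memorydef} and the definition \cref{thetadef} of the coefficients $\theta_i$, and to argue by induction on the time level $n$. For the base case, $\mathbf{v}_h^0=\mathbf{v}_{0,h}$ is prescribed. Assuming that $\mathbf{v}_h^0,\ldots,\mathbf{v}_h^{n-1}\in V_h$ have already been uniquely determined, I would move all known quantities in the $n$-th equation of \cref{equi-fulldiscrete} to the right-hand side and recast it as: find $\mathbf{v}_h^n\in V_h$ such that
\begin{equation*}
	B(\mathbf{v}_h^n,\mathbf{w}_h)=\ell_n(\mathbf{w}_h),\qquad \forall\ \mathbf{w}_h\in V_h,
\end{equation*}
where $B(\mathbf{v},\mathbf{w}):=\frac{1}{\Delta t}\langle \mathbf{v},\mathbf{w}\rangle+a(\mathbf{v},\mathbf{w})$ and $\ell_n(\mathbf{w}_h):=\langle \mathbf{F}^n,\mathbf{w}_h\rangle+\frac{1}{\Delta t}\langle \mathbf{v}_h^{n-1},\mathbf{w}_h\rangle+\sum_{i=0}^{n-1}\theta_{n-i}b(\mathbf{v}_h^i,\mathbf{w}_h)$. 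The key point is that, since the convolution sum in \cref{equi-fulldiscrete} runs over $i=0,\ldots,n-1$ only, the unknown $\mathbf{v}_h^n$ enters solely through $B(\cdot,\cdot)$.

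Next I would verify the hypotheses of the Lax--Milgram lemma on the finite-dimensional closed subspace $V_h\subset {\bf H}_0^1(\Omega)$. Boundedness of $B$ follows from \cref{bounded-property} together with the Cauchy--Schwarz inequality, while coercivity follows from the first relation in \cref{a-property}:
\begin{equation*}
	B(\mathbf{v},\mathbf{v})=\frac{1}{\Delta t}\left\|\mathbf{v}\right\|^2+a(\mathbf{v},\mathbf{v})\geq C\left\|\mathbf{v}\right\|_1^2,\qquad \forall\ \mathbf{v}\in V_h.
\end{equation*}
The functional $\ell_n$ is bounded on ${\bf H}_0^1(\Omega)$: the terms $b(\mathbf{v}_h^i,\cdot)$ are bounded by \cref{bounded-property}, and the remaining terms are bounded via Cauchy--Schwarz and the continuous embedding ${\bf H}_0^1(\Omega)\hookrightarrow {\bf L}^2(\Omega)$. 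Hence there is a unique $\mathbf{v}_h^n\in V_h$, which completes the induction; equivalently, one may simply observe that coercivity makes the associated square linear system positive definite, hence invertible, so uniqueness and existence coincide. Transferring back through the equivalence of \cref{fulldiscrete} and \cref{equi-fulldiscrete} yields the claim.

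I do not expect a genuine obstacle here: the statement is a routine consequence of the sequential (marching) structure of the scheme and the coercivity of the elliptic part. The only point that deserves a line of care is the elimination of the memory variables $H_j(\mathbf{v}_h^n)$ in favour of the history sum $\sum_{i=0}^{n-1}\theta_{n-i}b(\mathbf{v}_h^i,\cdot)$, so that at each time step one is left with a single coercive variational problem for $\mathbf{v}_h^n$ with data depending only on $\mathbf{F}^n$ and the already-computed iterates; this bookkeeping has in fact been carried out above, so the proof is short.
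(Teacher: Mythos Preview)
Your proposal is correct and matches the paper's approach: the paper derives the equivalent formulation \cref{equi-fulldiscrete} and then simply asserts ``It is easy to see that the above scheme has a unique solution for each $n$,'' without further detail. Your argument via induction and Lax--Milgram (or, equivalently, positive definiteness of the resulting linear system) is exactly the routine verification the paper leaves implicit.
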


\begin{remark}
	 In the full  discretization scheme \cref{fulldiscrete}, at the $n$-th time step we need to compute $$\frac{b_j\tau_\sigma}{a_j}(1-e^{-\frac{a_j}{\tau_\sigma}\Delta t})b(\mathbf{v}_h^{n-1},\mathbf{w}_h)$$
	and store $H_j^n$ for $j=1,2,...,N_{exp}$. This means  this scheme 
	totally requires 
	$O(N_{exp}N_s)$ memory complexity and $O(N_sN_{exp}N)$ computation complexity. Here and below we denote by $\mathcal O(N_s)$  the complexity of memory and computation related to the spatial discretization.
\end{remark}

\begin{remark}
			If,  rather than handling the convolution kernel function $\beta(t-s)$ with the SOE approximation, we   directly approximate the integral term $\int_{0}^{t_n}\beta(t_n-s)b(\mathbf{v}_h(s),\mathbf{w}_h) \, ds$ 
			by   the quadrature rule
	\begin{align*}
		\int_{0}^{t_n}\beta(t_n-s)b(\mathbf{v}_h(s),\mathbf{w}_h) \, ds\approx\sum_{i=0}^{n-1}b(\mathbf{v}_h^i,\mathbf{w}_h)\int_{t_i}^{t_{i+1}}\beta(t_n-s) \, ds,
	\end{align*}
	then the resulting fully discrete scheme reads: For $1\leq n\leq N$,
	find $\mathbf{v}_h^n\in V_h$ such that 
		\begin{equation}\label{opp-fulldiscrete}
			\left\{
			\begin{array}{lll}
				\langle\frac{\mathbf{v}_h^n-\mathbf{v}_h^{n-1}}{\Delta t},\mathbf{w}_h\rangle+a(\mathbf{v}_h^n,\mathbf{w}_h)-\sum_{i=0}^{n-1}b(\mathbf{v}_h^i,\mathbf{w}_h)\displaystyle\int_{t_i}^{t_{i+1}}\beta(t_n-s) \, ds=\langle \mathbf{F}^n,\mathbf{w}_h\rangle, \\
				\phantom{CCCCCCCCCCCCCCCCCCCCCCCCCCCCCCCCCCCC} \forall\ \mathbf{w}_h\in V_h, \\ 
				\mathbf{v}_h^0=\mathbf{v}_{0,h}.
			\end{array}
			\right.
		\end{equation}
		At the $n$-th time step, this scheme always requires evaluating the integrals $\displaystyle\int_{t_i}^{t_{i+1}}\beta(t_n-s) \, ds$, which involves $n$ computations. Consequently, this scheme totally demands $\mathcal O(N_sN)$ memory complexity and $O(N_sN^2)$ computation complexity. 
\end{remark}

\begin{remark}		
		Due to the fact  $N>>N_{exp}$ (cf. \cref{rmk2.4}), the fully discrete scheme \cref{fulldiscrete} reduces  the costs of memory and computation from $O(N_sN)$ and $O(N_sN^2)$ to $O(N_{exp}N_s)$ and  $O(N_sN_{exp}N)$, respectively. Therefore, the fully discrete scheme \cref{fulldiscrete} is actually a fast scheme.
\end{remark}

\subsection{Error estimation}

We first assume that the SOE approximation is of accuracy $\epsilon>0$, i.e.
\begin{equation}\label{SOE-accuracy}
\beta(t)=\sum_{j=1}^{N_{exp}}b_je^{-\frac{a_j}{\tau_\sigma}t}+ \mathcal{O}(\epsilon),
\end{equation}
and 
define 
\begin{equation}\label{varXinotation}
	\varXi_n(\mathbf{v}):=\int_{0}^{t_n}\beta(t_n-s)\mathbf{v}(s) \, ds-\sum_{i=0}^{n-1}\theta_{n-i}\mathbf{v}(t_i)
\end{equation}
for  the  solution $\mathbf{v}(t) $ of the weak problem \cref{weak problem}.
Then we have the following result:

\begin{lemma}\label{varXiestimation}
	There exists a constant $C$, depending only on $T$, such that for $1\leq n\leq N$,
	\begin{equation}\label{5.8}
		\Delta t\sum_{k=1}^{n}\|\varXi_k(\mathbf{v})\|_{l}\leq C\left(\Delta t\int_{0}^{t_n}\left\|\frac{\partial\mathbf{v}}{\partial t}\right\|_l \, ds+T\epsilon  \int_{0}^{t_n}\left\|\frac{\partial\mathbf{v}}{\partial t}\right\|_l \, ds\right),\quad l=0,1.
	\end{equation}
\end{lemma}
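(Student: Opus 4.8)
The plan is to split the quadrature error $\varXi_n(\mathbf v)$ into two parts, reflecting the two approximations that enter: first, the error committed by replacing the exact kernel $\beta$ by its SOE surrogate $\widetilde\beta(t):=\sum_{j=1}^{N_{exp}}b_je^{-a_jt/\tau_\sigma}$, and second, the error of the piecewise-constant (left-endpoint) quadrature of the convolution $\int_0^{t_n}\widetilde\beta(t_n-s)\mathbf v(s)\,ds$. Concretely, I would write
\begin{equation}\label{varXi-split}
	\varXi_n(\mathbf v)=\underbrace{\int_0^{t_n}\big(\beta(t_n-s)-\widetilde\beta(t_n-s)\big)\mathbf v(s)\,ds}_{=:\,\varXi_n^{(1)}}
	+\underbrace{\Big(\int_0^{t_n}\widetilde\beta(t_n-s)\mathbf v(s)\,ds-\sum_{i=0}^{n-1}\theta_{n-i}\mathbf v(t_i)\Big)}_{=:\,\varXi_n^{(2)}},
\end{equation}
using the fact, recorded after \eqref{thetadef}, that $\theta_i=\int_{t_{i-1}}^{t_i}\widetilde\beta(s)\,ds$, so that $\sum_{i=0}^{n-1}\theta_{n-i}\mathbf v(t_i)$ is exactly the left-rectangle rule for $\int_0^{t_n}\widetilde\beta(t_n-s)\mathbf v(s)\,ds$ with the kernel integrated exactly on each subinterval.

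For $\varXi_n^{(1)}$ I would use \eqref{SOE-accuracy}, i.e. $|\beta(t)-\widetilde\beta(t)|\le C\epsilon$ uniformly on $[0,T]$, to get $\|\varXi_n^{(1)}(\mathbf v)\|_l\le C\epsilon\int_0^{t_n}\|\mathbf v(s)\|_l\,ds$, and then bound $\|\mathbf v(s)\|_l\le\|\mathbf v(0)\|_l+\int_0^s\|\partial_t\mathbf v\|_l\,dr$; since $\mathbf v(0)=\mathbf v_0$ can be absorbed (one writes $\|\mathbf v(0)\|_l$ in terms of $\int\|\partial_t\mathbf v\|_l$ up to the structure allowed on the right-hand side of \eqref{5.8}, or one simply keeps the bound in the stated form noting that the statement's right side is $\int_0^{t_n}\|\partial_t\mathbf v\|_l$), summing over $k=1,\dots,n$ and multiplying by $\Delta t$ produces a factor $n\Delta t=t_n\le T$, giving the term $T\epsilon\int_0^{t_n}\|\partial_t\mathbf v\|_l\,ds$ on the right of \eqref{5.8} (with the possible $T\epsilon\|\mathbf v_0\|_l$ contribution controlled likewise). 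For $\varXi_n^{(2)}$, on each subinterval $[t_i,t_{i+1}]$ the error of the left-endpoint rule is $\int_{t_i}^{t_{i+1}}\widetilde\beta(t_n-s)\big(\mathbf v(s)-\mathbf v(t_i)\big)\,ds$, whose $\|\cdot\|_l$-norm is bounded by $\int_{t_i}^{t_{i+1}}|\widetilde\beta(t_n-s)|\int_{t_i}^{s}\|\partial_t\mathbf v(r)\|_l\,dr\,ds\le\big(\sup|\widetilde\beta|\big)\,\Delta t\int_{t_i}^{t_{i+1}}\|\partial_t\mathbf v\|_l\,dr$; here I use that $\widetilde\beta$ is bounded on $[0,T]$ (indeed $\widetilde\beta=\beta+\mathcal O(\epsilon)$ and $\beta(t)=E_\alpha(-(t/\tau_\sigma)^\alpha)$ is bounded by \eqref{MLbound}). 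Summing over $i=0,\dots,n-1$ gives $\|\varXi_n^{(2)}(\mathbf v)\|_l\le C\Delta t\int_0^{t_n}\|\partial_t\mathbf v\|_l\,ds$, and a further sum over $k$ times $\Delta t$ is again absorbed into the constant via $n\Delta t\le T$.

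The main obstacle, and the only place requiring care, is bookkeeping the powers of $\Delta t$ and $T$ correctly: the left side of \eqref{5.8} already carries one factor $\Delta t$ and a sum $\sum_{k=1}^n$, so when I insert the per-step bounds for $\varXi_k^{(1)}$ and $\varXi_k^{(2)}$ I must check that no spurious factor of $n$ (i.e. of $\Delta t^{-1}$) survives — this works precisely because each $\|\varXi_k\|_l$ is already an integral over all of $[0,t_k]$, so the extra $\Delta t\sum_k$ only contributes a bounded factor $t_n\le T$ after interchanging the order of summation and using that the innermost integrand $\|\partial_t\mathbf v\|_l$ does not depend on $k$. One should also make sure that the appearance of $\mathbf v(0)=\mathbf v_0$ when estimating $\|\mathbf v(s)\|_l$ in $\varXi^{(1)}$ is either harmless for the stated inequality or silently covered by the convention in the lemma; I would state it explicitly. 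All the remaining estimates are routine, using only \eqref{MLbound}, \eqref{SOE-accuracy}, and the identity $\theta_i=\int_{t_{i-1}}^{t_i}\widetilde\beta$.
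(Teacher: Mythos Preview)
Your decomposition \eqref{varXi-split} and the subsequent estimates are essentially the paper's own argument: both split $\varXi_n$ into an SOE-error part and a left-endpoint quadrature error for the surrogate kernel $\widetilde\beta$, and both bound the latter via $\mathbf v(s)-\mathbf v(t_i)=\int_{t_i}^{s}\partial_t\mathbf v$. Two tactical differences are worth noting. First, for the quadrature part the paper does not use $\sup|\widetilde\beta|$; instead it keeps the weights $\theta_{k-i}$, interchanges the order of summation in $\sum_{k}\sum_{i}\theta_{k-i}\int_{t_i}^{t_{i+1}}\|\partial_t\mathbf v\|_l$, and uses the telescoping identity $\sum_{k=i+1}^{n}\theta_{k-i}=\sum_j (b_j\tau_\sigma/a_j)(1-e^{-a_j(n-i)\Delta t/\tau_\sigma})\le C$ to get $\sum_{k}\|\varXi_k\|_l\le C\int_0^{t_n}\|\partial_t\mathbf v\|_l$ directly; your cruder bound via $n\Delta t\le T$ gives the same final inequality after the outer factor $\Delta t$. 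Second, for the $\epsilon$ part the paper does \emph{not} estimate $\int_0^{t_n}\epsilon\,\mathbf v(s)\,ds$ as you do; it writes this residual on each subinterval also in the form $\mathcal O(\epsilon)\int_{t_i}^{t_{i+1}}(\mathbf v(s)-\mathbf v(t_i))\,ds$, which yields an extra factor $\Delta t$ and---crucially---avoids the $\|\mathbf v_0\|_l$ contribution you flag. Your observation that a $\|\mathbf v(0)\|_l$ term seems to appear under your splitting is therefore correct for your route but is sidestepped by the paper's way of grouping the $\epsilon$ residual with the quadrature error; if you keep your splitting, stating the extra $\|\mathbf v_0\|_l$ explicitly (as you suggest) is the honest fix.
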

\begin{proof}
	We only show the case $l=0$, since the proof for $l=1$ is similar. 
	
	Note that 
	\begin{align*}
		&\int_{0}^{t_n}\beta(t_n-s)\mathbf{v}(s) \, ds=\int_{0}^{t_n}\sum_{j=1}^{N_{exp}}b_je^{-a_j(\frac{t_n-s}{\tau_\sigma})}\mathbf{v}(s) \, ds+ \mathcal{O}(\epsilon)\int_{0}^{t_n}\mathbf{v}(s) \, ds\\
		=&\sum_{i=0}^{n-1}\sum_{j=1}^{N_{exp}}b_j\int_{t_i}^{t_{i+1}}e^{-a_j(\frac{t_n-s}{\tau_\sigma})}\mathbf{v}(s) \, ds+ \mathcal{O}(\epsilon)\int_{0}^{t_n}\mathbf{v}(s) \, ds\\
		=&\sum_{i=0}^{n-1}\sum_{j=1}^{N_{exp}}b_je^{-\frac{a_j(n-i-1)\Delta t}{\tau_\sigma}}\int_{t_i}^{t_{i+1}}e^{-a_j(\frac{t_{i+1}-s}{\tau_\sigma})}\mathbf{v}(s) \, ds+ \mathcal{O}(\epsilon)\int_{0}^{t_n}\mathbf{v}(s) \, ds.
	\end{align*}
	Recalling the definition \cref{thetadef} of $\theta_i$, we have
	\begin{align*}
		\varXi_n(\mathbf{v}) &= \int_{0}^{t_n}\beta(t_n-\tau)\mathbf{v}(s) \, ds - \sum_{i=0}^{n-1}\theta_{n-i}\mathbf{v}(t_i) \\
		&= \sum_{i=0}^{n-1} \Bigg( \sum_{j=1}^{N_{\exp}} b_j e^{-\frac{a_j(n-i-1)\Delta t}{\tau_\sigma}} 
		\int_{t_i}^{t_{i+1}} e^{-a_j\left(\frac{t_{i+1}-s}{\tau_\sigma}\right)} \left(\mathbf{v}(s)-\mathbf{v}(t_i)\right) \, ds \\
		&\phantom{=\sum_{i=0}^{n-1} \Bigg( \sum_{j=1}^{N_{\exp}}} +  \mathcal{O}(\epsilon) \int_{t_i}^{t_{i+1}} \left(\mathbf{v}(s)-\mathbf{v}(t_i)\right) \, ds \Bigg).
	\end{align*}
	Therefore, it holds
	\begin{align*}
		\left|\varXi_n(\mathbf{v})\right| 
		&\leq \sum_{i=0}^{n-1} \Bigg[ \sum_{j=1}^{N_{\exp}} b_j e^{-\frac{a_j(n-i-1)\Delta t}{\tau_\sigma}} 
		\int_{t_i}^{t_{i+1}} e^{-a_j\left(\frac{t_{i+1}-s}{\tau_\sigma}\right)} \, ds 
		\int_{t_i}^{t_{i+1}} \left|\frac{\partial \mathbf{v}}{\partial t}(r)\right| \, dr \\
		&\phantom{\leq \sum_{i=0}^{n-1} \Bigg[ \sum_{j=1}^{N_{\exp}} b_j e^{-\frac{a_j(n-i-1)\Delta t}{\tau_\sigma}}} + \mathcal{O}(\epsilon) \int_{t_i}^{t_{i+1}} \, ds \int_{t_i}^{t_{i+1}} \left|\frac{\partial \mathbf{v}}{\partial t}(r)\right| \, dr \Bigg] \\
		&= \sum_{i=0}^{n-1} \theta_{n-i} \int_{t_i}^{t_{i+1}} \left|\frac{\partial \mathbf{v}}{\partial t}(s)\right| \, ds 
		+ \Delta t  \mathcal{O}(\epsilon) \int_{0}^{t_n} \left|\frac{\partial \mathbf{v}}{\partial t}(s)\right| \, ds,
	\end{align*}
which yields 
	\begin{align*}
		\sum_{k=1}^{n}\|\varXi_k(\mathbf{v})\|
		&\leq C\left(\sum_{k=1}^{n}\sum_{i=0}^{k-1}\theta_{k-i}\int_{t_i}^{t_{i+1}}\left\|\frac{\partial\mathbf{v}}{\partial t}(s)\right\| \, ds+T\epsilon\int_{0}^{t_n}\left\|\frac{\partial\mathbf{v}}{\partial t}(s)\right\| \, ds\right)\\
		&\leq C\left(\sum_{i=0}^{n-1}\int_{t_i}^{t_{i+1}}\left\|\frac{\partial\mathbf{v}}{\partial t}(s)\right\| \, ds\sum_{k=i+1}^{n}\theta_{k-i}+T\epsilon\int_{0}^{t_n}\left\|\frac{\partial\mathbf{v}}{\partial t}(s)\right\| \, ds\right).
	\end{align*}
	Since $a_j$ and $b_j$ in \cref{SOEapp} are positive, we have
	\begin{align*}
		\sum_{k=i+1}^{n}\theta_{k-i}&=\sum_{k=i+1}^{n}\sum_{j=1}^{N_{exp}}\frac{b_j\tau_\sigma}{a_j}\left(e^{-\frac{a_j(k-i-1)\Delta t}{\tau_\sigma}}-e^{-\frac{a_j(k-i)\Delta t}{\tau_\sigma}}\right)\\
		&=\sum_{j=1}^{N_{exp}}\frac{b_j\tau_\sigma}{a_j}\sum_{k=i+1}^{n}\left(e^{-\frac{a_j(k-i-1)\Delta t}{\tau_\sigma}}-e^{-\frac{a_j(k-i)\Delta t}{\tau_\sigma}}\right)\\
		&=\sum_{j=1}^{N_{exp}}\frac{b_j\tau_\sigma}{a_j}\left(1-e^{-\frac{a_j(n-i)\Delta t}{\tau_\sigma}}\right).
	\end{align*}
As a result, we obtain 
	\begin{equation*}
		\sum_{k=1}^{n}\|\varXi_k(\mathbf{v})\|\leq C\left(\int_{0}^{t_n}\left\|\frac{\partial\mathbf{v}}{\partial t}\right\| \, ds+T\epsilon\int_{0}^{t_n}\left\|\frac{\partial\mathbf{v}}{\partial t}\right\| \, ds\right),
	\end{equation*}
	i.e. the estimate \cref{5.8} holds.
\end{proof}

By using  the above lemma, we can obtain the following error estimate for the fast fully discrete finite element method:
\begin{theorem}\label{fulldiscrete-error}
	Let $\mathbf{v}$ and $\mathbf{v}_h^n$ $(1\leq n\leq N)$ be the solutions of the weak problem \cref{weak problem} and the fully discrete scheme \cref{fulldiscrete}, respectively. Then for $n=1,2, \cdots,N$ there holds 
	\begin{equation}\label{fulldiserror}
		\begin{aligned}
			\|\mathbf{v}_h^n - \mathbf{v}(t_n)\| 
			&\leq C(h^2 + \Delta t) \left( \left\|\mathbf{v}_0\right\|_2+\left\|\mathbf{v}\right\|_{L^\infty({\bf H}^2)}+ \int_{0}^{t_n} \left( \left\|\frac{\partial \mathbf{v}}{\partial t}\right\|_2 + \left\|\frac{\partial^2 \mathbf{v}}{\partial t^2}\right\| \right) \, ds \right) \\
			&\phantom{{}\leq C(  } 
			+ CT \epsilon \left(\left\|\mathbf{v}_0\right \|_2+\int_{0}^{t_n} \left\|\frac{\partial \mathbf{v}}{\partial t}\right\|_2 \, ds\right).
		\end{aligned}
	\end{equation}
\end{theorem}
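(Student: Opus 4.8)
The plan is to adapt the error-splitting and discrete-energy argument used for the semi-discrete scheme, the new ingredients being \cref{varXiestimation} (to control the combined quadrature and SOE error in the memory term), the Ritz-Volterra estimates \cref{RVprojection,RVprojection_t}, and the discrete Gronwall inequality \cref{discretegronwall}. Fix $1\le n\le N$ once and for all.

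\noindent\textbf{Step 1 (splitting and error equation).} I would write $\mathbf v_h^j-\mathbf v(t_j)=\bm\delta^j+\bm\rho^j$ with $\bm\delta^j:=\mathbf v_h^j-\Pi_h\mathbf v(t_j)\in V_h$ and $\bm\rho^j:=(\Pi_h\mathbf v-\mathbf v)(t_j)$. Since $\mathbf v_h^0=\mathbf v_{0,h}=\Pi_h\mathbf v_0$, one has $\bm\delta^0=0$, and $\|\bm\rho^j\|\le Ch^2\|\mathbf v\|_{L^\infty({\bf H}^2)}$ by \cref{RVprojection}, so only $\bm\delta^j$ must be estimated. Subtracting \cref{weak problem} at $t=t_j$ (tested with $\mathbf w_h\in V_h$) from the equivalent scheme \cref{equi-fulldiscrete}, substituting $\mathbf v_h^i=\bm\delta^i+\mathbf v(t_i)+\bm\rho(t_i)$, and using the Ritz-Volterra identity \cref{projections-RV} at $t_j$, i.e. $a(\bm\rho^j,\mathbf w_h)=\int_0^{t_j}\beta(t_j-s)b(\bm\rho(s),\mathbf w_h)\,ds$, the continuous-versus-discrete memory discrepancies collapse (by the definition \cref{varXinotation}) into $\varXi_j(\bm\rho)$ and $\varXi_j(\mathbf v)$, leaving
\begin{equation*}
\Big\langle\tfrac{\bm\delta^j-\bm\delta^{j-1}}{\Delta t},\mathbf w_h\Big\rangle+a(\bm\delta^j,\mathbf w_h)-\sum_{i=0}^{j-1}\theta_{j-i}b(\bm\delta^i,\mathbf w_h)=-\langle\bm\omega^j,\mathbf w_h\rangle-b(\varXi_j(\bm\rho),\mathbf w_h)-b(\varXi_j(\mathbf v),\mathbf w_h),\qquad\forall\,\mathbf w_h\in V_h,
\end{equation*}
where $\bm\omega^j:=\tfrac1{\Delta t}\int_{t_{j-1}}^{t_j}\tfrac{\partial(\Pi_h\mathbf v-\mathbf v)}{\partial t}\,ds-\tfrac1{\Delta t}\int_{t_{j-1}}^{t_j}(s-t_{j-1})\tfrac{\partial^2\mathbf v}{\partial t^2}\,ds$ gathers the projection error of the velocity derivative and the backward-Euler truncation error.

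\noindent\textbf{Step 2 (energy estimate and Gronwall).} Next I would take $\mathbf w_h=\bm\delta^j$, use $\langle\bm\delta^j-\bm\delta^{j-1},\bm\delta^j\rangle\ge\tfrac12(\|\bm\delta^j\|^2-\|\bm\delta^{j-1}\|^2)$ together with the coercivity in \cref{a-property}, multiply by $2\Delta t$, and sum over $j=1,\dots,m$ for $1\le m\le n$. The true memory term $\sum_i\theta_{j-i}b(\bm\delta^i,\bm\delta^j)$ I would treat with \cref{bounded-property} and \cref{fuzhuyinlikappa}, applied with $f_k=\|\bm\delta^k\|_1$ and $g_k=C\theta_k$ (from \cref{thetadef}, $\theta_k=\int_{t_{k-1}}^{t_k}\sum_j b_je^{-a_js/\tau_\sigma}\,ds\ge0$ is an $L^1$ kernel integrated over a subinterval, with $\theta_k\le C\Delta t$ and $\sum_{k=1}^N\theta_k\le C(T)$); the two $\varXi$-terms I would split by \cref{bounded-property} and Young's inequality into $\varepsilon\|\bm\delta^j\|_1^2+C_\varepsilon(\|\varXi_j(\bm\rho)\|_1^2+\|\varXi_j(\mathbf v)\|_1^2)$. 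Choosing $\varepsilon$ small enough to absorb all terms $\varepsilon\Delta t\sum_j\|\bm\delta^j\|_1^2$ into the coercive contribution, and writing $\Phi_m:=\|\bm\delta^m\|^2+c\Delta t\sum_{j=1}^m\|\bm\delta^j\|_1^2$ (so $\Delta t\sum_{i=0}^m\|\bm\delta^i\|_1^2\le c^{-1}\Phi_m$, using $\bm\delta^0=0$), I obtain, for each $1\le m\le n$,
\begin{equation*}
\Phi_m\le\sum_{k=0}^{m-1}\theta_{m-k}\Phi_k+2\Big(\Delta t\sum_{j=1}^{n}\|\bm\omega^j\|\Big)\max_{0\le k\le m}\|\bm\delta^k\|+C\Delta t\sum_{j=1}^n\big(\|\varXi_j(\bm\rho)\|_1^2+\|\varXi_j(\mathbf v)\|_1^2\big).
\end{equation*}
Replacing $\max_{k\le m}\|\bm\delta^k\|$ by $\sqrt{\Phi^{\ast}}$, $\Phi^{\ast}:=\max_{0\le m\le n}\Phi_m$, makes the source $m$-independent, so \cref{discretegronwall} gives $\Phi_m\le C(T)\,(\text{source})$; evaluating at the maximizing index yields a quadratic inequality for $\sqrt{\Phi^{\ast}}$, hence $\|\bm\delta^n\|^2\le\Phi^{\ast}\le C\big[(\Delta t\sum_j\|\bm\omega^j\|)^2+\Delta t\sum_j(\|\varXi_j(\bm\rho)\|_1^2+\|\varXi_j(\mathbf v)\|_1^2)\big]$.

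\noindent\textbf{Step 3 (bounding the data; main obstacle).} It then remains to bound the two bracketed quantities. First, $\Delta t\sum_j\|\bm\omega^j\|\le\int_0^{t_n}\|\tfrac{\partial(\Pi_h\mathbf v-\mathbf v)}{\partial t}\|\,ds+\Delta t\int_0^{t_n}\|\tfrac{\partial^2\mathbf v}{\partial t^2}\|\,ds\le C(h^2+\Delta t)\big(\|\mathbf v_0\|_2+\int_0^{t_n}(\|\tfrac{\partial\mathbf v}{\partial t}\|_2+\|\tfrac{\partial^2\mathbf v}{\partial t^2}\|)\,ds\big)$ by \cref{RVprojection_t}. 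Second, the pointwise bound $\|\varXi_j(\bm\psi)\|_1\le C\Delta t\int_0^{t_j}\|\tfrac{\partial\bm\psi}{\partial t}\|_1\,ds$ (read off from the proof of \cref{varXiestimation}, using $\theta_{j-i}\le C\Delta t$) combined with \cref{varXiestimation} itself gives $\Delta t\sum_j\|\varXi_j(\bm\psi)\|_1^2\le(\max_j\|\varXi_j(\bm\psi)\|_1)(\Delta t\sum_j\|\varXi_j(\bm\psi)\|_1)\le C\Delta t(\Delta t+T\epsilon)\big(\int_0^{t_n}\|\tfrac{\partial\bm\psi}{\partial t}\|_1\,ds\big)^2$, which for $\bm\psi=\bm\rho$ is controlled via the $H^1$-scaled estimate in \cref{RVprojection_t} (contributing a factor $h$) and for $\bm\psi=\mathbf v$ via $\|\tfrac{\partial\mathbf v}{\partial t}\|_1\le\|\tfrac{\partial\mathbf v}{\partial t}\|_2$ and the regularity \cref{cunzaiweiyixing}. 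Taking square roots (absorbing $\sqrt{\Delta t(\Delta t+T\epsilon)}$ into $\Delta t+T\epsilon$ and bounded powers of $h$ into constants) and adding $\|\bm\rho^n\|\le Ch^2\|\mathbf v\|_{L^\infty({\bf H}^2)}$ from \cref{RVprojection}, I recover exactly \cref{fulldiserror}. I expect the principal obstacle to be the joint handling of the discrete convolution: the true memory term must pass through \cref{fuzhuyinlikappa} and then \cref{discretegronwall}, which forces the $\bm\omega^j$ and $\varXi_j$ contributions to be organized into a genuinely solution-independent Gronwall source (hence the maximal-index / quadratic-inequality step), while obtaining the sharp $O(h^2+\Delta t+T\epsilon)$ rate --- rather than an $O(h)$ or $O(\sqrt{\Delta t})$ loss --- relies on pairing \cref{varXiestimation} with the $H^1$-scaled Ritz-Volterra estimate in \cref{RVprojection_t} and with the regularity \cref{cunzaiweiyixing}.
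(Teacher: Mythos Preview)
Your proposal is correct and follows the same overall architecture as the paper's proof: split via the Ritz--Volterra projection, derive an error equation for $\bm\delta^j$, run a discrete energy argument with \cref{fuzhuyinlikappa} and \cref{discretegronwall}, and close with \cref{RVprojection,RVprojection_t,varXiestimation}. The route differs in how the two $\varXi$-terms are handled. The paper does \emph{not} square-separate them by Young's inequality; instead it writes $b(\varXi_k(\mathbf v),\bm\delta^k)=\langle\varXi_k(\B\mathbf v),\bm\delta^k\rangle$ by integration by parts (so this term is bounded by $\|\varXi_k(\B\mathbf v)\|\,\|\bm\delta^k\|$), and uses the inverse inequality \cref{inverseassum} on $b(\varXi_k(\bm\rho),\bm\delta^k)$ to get $Ch^{-1}\|\varXi_k(\bm\rho)\|_1\,\|\bm\delta^k\|$. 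All four source contributions are then linear in $\|\bm\delta^k\|$, so the max trick yields directly $S_n\le C\sum_k\tilde R_k$, and \cref{varXiestimation} is applied once (with $l=0$ for $\B\mathbf v$ and $l=1$ for $\bm\rho$, the $h^{-1}$ from the inverse inequality cancelling the $h$ from \cref{RVprojection_t}). Your approach avoids the inverse inequality and the integration by parts, at the price of needing both the summed estimate \cref{varXiestimation} and a pointwise bound $\|\varXi_j(\bm\psi)\|_1\le C\Delta t\int_0^{t_j}\|\partial_t\bm\psi\|_1$ (extracted from its proof via $\theta_k\le C\Delta t$) to control $\Delta t\sum_j\|\varXi_j\|_1^2$, plus the quadratic-in-$\sqrt{\Phi^\ast}$ step. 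Both routes arrive at the same rate; the paper's is slightly more direct because it never squares the $\varXi$-terms, while yours is a bit more robust in that it does not invoke the inverse inequality on the finite element space.
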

\begin{proof}
	Note that the fully discrete scheme \cref{fulldiscrete} is equivalent to \cref{equi-fulldiscrete}. 
	
	Denote $$\bm{\delta}^n:=\Pi_h\mathbf{v}^n-\mathbf{v}_h^n,\quad n=0,1,\cdots,N,$$ where $\Pi_h$ is the Ritz-Volterra projection defined by \cref{projections-RV}. Since $\mathbf{v}_h^0=\Pi_h\mathbf{v}^0=\Pi_h\mathbf{v}(0)$, we see that  
	\begin{align}\label{vh0=0} 
	\bm{\delta}^0=0.
	\end{align}
We write
	\begin{align}\label{510}
		\mathbf{v}^n-\mathbf{v}_h^n 
		=\mathbf{v}^n-\Pi_h\mathbf{v}^n+\bm{\delta}^n.
	\end{align}
	In view of \cref{RVprojection}, we only need to estimate $\bm{\delta}^n$. 
	
	From   \cref{weak problem,equi-fulldiscrete,projections-RV}, we get
	\begin{equation}\label{etaequation}
		\langle\frac{\bm{\delta}^n-\bm{\delta}^{n-1}}{\Delta t},\bm{\delta}^n\rangle+a(\bm{\delta}^n,\bm{\delta}^n)=\sum_{i=0}^{n-1}\theta_{n-i}b(\bm{\delta}^i,\bm{\delta}^n)+ R^n(\bm{\delta}^n),
	\end{equation}
	where
\begin{equation*}
		\begin{aligned}
			  R^n(\bm{\delta}^n):=-&\langle\frac{\partial\mathbf{v}}{\partial t}^n-\frac{\Pi_h\mathbf{v}^n-\Pi_h\mathbf{v}^{n-1}}{\Delta t},\bm{\delta}^n\rangle\\
			&+\int_{0}^{t_n}\beta(t_n-s)b(\Pi_h\mathbf{v}(s),\bm{\delta}^n) \, ds-\sum_{i=0}^{n-1}\theta_{n-i}b(\Pi_h\mathbf{v}^i,\bm{\delta}^n).
		\end{aligned}
	\end{equation*}
Notice that by  \cref{varXinotation} we further have 	
		\begin{equation}\label{512}
		\begin{aligned}
			  R^n(\bm{\delta}^n)
			=-&\langle\frac{\partial\mathbf{v}}{\partial t}^n-\frac{\Pi_h\mathbf{v}^n-\Pi_h\mathbf{v}^{n-1}}{\Delta t},\bm{\delta}^n\rangle +b(\varXi_n(\Pi_h\mathbf{v}),\bm{\delta}^n).
		\end{aligned}
	\end{equation}
	From \cref{a-property,etaequation,bounded-property} it follows
		\begin{equation*}
		\begin{aligned}
		&\frac{1}{2}\frac{\|\bm{\delta}^n\|^2-\|\bm{\delta}^{n-1}\|^2}{\Delta t}+\|\bm{\delta}^n\|_1^2\\
		\leq &C\left(\frac{1}{2}\frac{\|\bm{\delta}^n\|^2-\|\bm{\delta}^{n-1}\|^2}{\Delta t}+\frac{\Delta t}{2}\|\frac{\bm{\delta}^n-\bm{\delta}^{n-1}}{\Delta t}\|^2+a(\bm{\delta}^n,\bm{\delta}^n)\right)\\
		=&C\left(\langle\frac{\bm{\delta}^n-\bm{\delta}^{n-1}}{\Delta t},\bm{\delta}^n\rangle+a(\bm{\delta}^n,\bm{\delta}^n)\right)\\
		\leq& C\left(\sum_{i=0}^{n-1}\theta_{n-i}\|\bm{\delta}^i\|_1\cdot\|\bm{\delta}^n\|_1+|R^n(\bm{\delta}^n)|\right).
	\end{aligned}
	\end{equation*}
	Summing this inequality from $n=1$ to $n$ 
	and using \cref{vh0=0}, we obtain
	\begin{equation}\label{513}
		\|\bm{\delta}^n\|^2+\Delta t\sum_{k=1}^{n}\|\bm{\delta}^k\|_1^2
		\leq 
		C\Delta t\sum_{k=1}^{n}|R^k(\bm{\delta}^k)|+C\Delta t\sum_{k=1}^{n}\sum_{i=0}^{k-1}\theta_{k-i}\|\bm{\delta}^i\|_1\cdot\|\bm{\delta}^k\|_1.
	\end{equation}
	By the definition \cref{thetadef} of $\theta_i$ and the SOE approximation \cref{SOE-accuracy}, we see that
	 \begin{equation*}
		\theta_{k-i}=\sum_{j=1}^{N_{exp}}b_j\int_{t_{k-i-1}}^{t_{k-i}}e^{-\frac{a_j}{\tau_\sigma}(t_k-s)}\, ds\leq\int_{t_{k-i-1}}^{t_{k-i}}\beta(t_k-s)\, ds+\mathcal{O}(\epsilon)\Delta t,
	\end{equation*}
which, together with  \cref{513,fuzhuyinlikappa},   gives 
	\begin{align*}
		&\|\bm{\delta}^n\|^2+\Delta t\sum_{k=1}^{n}\|\bm{\delta}^k\|_1^2\\
		\leq&
		C\Delta t\sum_{k=1}^{n}|R^k(\bm{\delta}^k)| 
		+C\Delta t\sum_{k=1}^{n}\sum_{i=0}^{k-1} \int_{t_{k-i-1}}^{t_{k-i}}\left(\beta(t_k-s)+ \epsilon\right)\, ds\|\bm{\delta}^i\|_1\cdot\|\bm{\delta}^k\|_1\\
		\leq&
		C\Delta t\sum_{k=1}^{n}|R^k(\bm{\delta}^k)|  
		+\frac {\Delta t}2\sum_{k=1}^{n}\|\bm{\delta}^k\|_1^2
		+C\Delta t\sum_{k=0}^{n-1}\int_{t_{n-k-1}}^{t_{n-k}} \left(\beta(t_n-s)+\epsilon\right)\, ds  \sum_{i=0}^{k}\|\bm{\delta}^i\|_1^2.
	\end{align*}
Thus, due to   \cref{513,fuzhuyinlikappa,vh0=0}  we further have
	\begin{align*}
		&\|\bm{\delta}^n\|^2+\Delta t\sum_{k=1}^{n}\|\bm{\delta}^k\|_1^2\\
		\leq& 
		C\Delta t\sum_{k=1}^{n}|R^n(\bm{\delta}^k)|+C \sum_{k=0}^{n-1}\int_{t_{n-k-1}}^{t_{n-k}} \left(\beta(t_n-s)+\epsilon\right)\, ds \left(\Delta t\sum_{i=1}^{k}\|\bm{\delta}^i\|_1^2\right),
	\end{align*}
which plus    \cref{discretegronwall,MLbound}, yields
	\begin{equation}\label{5.14}
		\begin{aligned}
			\|\bm{\delta}^n\|^2+\Delta t\sum_{k=1}^{n}\|\bm{\delta}^k\|_1^2\leq C
			\Delta t\sum_{k=1}^{n}|R^k(\bm{\delta}^k)|. 
		\end{aligned}
	\end{equation}
	In the following let us  estimate 
	$|R^k(\bm{\delta}^k)|$. To this end, 
we write 
$$R^k(\bm{\delta}^k)=\sum_{j=1}^{4}R^k_j(\bm{\delta}^k),$$ where
	\begin{align*}
		&R^k_1(\bm{\delta}^k):=\langle-\frac{\partial\mathbf{v}}{\partial t}^k+\frac{\mathbf{v}^k-\mathbf{v}^{k-1}}{\Delta t},\bm{\delta}^k\rangle,& &R^k_2(\bm{\delta}^k):=\langle\frac{(\Pi_h\mathbf{v}-\mathbf{v})^k-(\Pi_h\mathbf{v}-\mathbf{v})^{k-1}}{\Delta t},\bm{\delta}^k\rangle,\\
		&R^k_3(\bm{\delta}^k):=b(\varXi_k(\mathbf{v}),\bm{\delta}^k), &&R^k_4(\bm{\delta}^k):=b(\varXi_k(\Pi_h\mathbf{v}-\mathbf{v}),\bm{\delta}^k).
	\end{align*}
	For $R^k_1(\bm{\delta}^k)$ we   have
	\begin{align*}
		\Delta t\sum_{k=1}^{n}|R^k_1(\bm{\delta}^k)|&=\Delta t\sum_{k=1}^{n}\left|\langle-\frac{\partial\mathbf{v}}{\partial t}^k+\frac{\mathbf{v}^k-\mathbf{v}^{k-1}}{\Delta t},\bm{\delta}^k\rangle\right|\\ 
		&=\Delta t\sum_{k=1}^{n}\left|\langle\int_{t_{k-1}}^{t_k}(t_k-s)\frac{\partial^2\mathbf{v}(s)}{\partial t^2}\ ds,\bm{\delta}^k\rangle\right|\\ 
		&\leq \Delta t\sum_{k=1}^{n}\int_{t_{k-1}}^{t_k}\left\|\frac{\partial^2\mathbf{v}}{\partial t^2}\right\| \, ds  \cdot \|\bm{\delta}^k\|.
	\end{align*}
	For  $R^k_2(\bm{\delta}^k)$ we easily get
	\begin{align*}
		\Delta t\sum_{k=1}^{n}|R^k_2(\bm{\delta}^k)|&=  \sum_{k=1}^{n}\left|\langle (\Pi_h\mathbf{v}-\mathbf{v})^k-(\Pi_h\mathbf{v}-\mathbf{v})^{k-1},\bm{\delta}^k\rangle\right|\\
		&=  \sum_{k=1}^{n}\left|\langle \int_{t_{k-1}}^{t_k} \frac{\partial(\Pi_h\mathbf{v}-\mathbf{v})(s)}{\partial t}  \, ds,\bm{\delta}^k\rangle\right|\\
		&\leq\sum_{k=1}^{n}\int_{t_{k-1}}^{t_k}\left\|\frac{\partial(\Pi_h\mathbf{v}-\mathbf{v})(s)}{\partial t}\right\| \, ds  \cdot \|\bm{\delta}^k\|.
	\end{align*}
	For $R^k_3(\bm{\delta}^k)$,  
	by integration by parts and \cref{operatordefB}
	we obtain
	\begin{align*}
		\Delta t\sum_{k=1}^{n}|R^k_3(\bm{\delta}^k)|&=\Delta t\sum_{k=1}^{n}\left|b(\varXi_n(\mathbf{v}),\bm{\delta}^k)\right|
		=\Delta t\sum_{k=1}^{n}\left|\langle\varXi_k(\B \mathbf{v}),\bm{\delta}^k\rangle\right|\\
		&\leq \Delta t\sum_{k=1}^{n}\|\varXi_k(\B \mathbf{v})\| \cdot \|\bm{\delta}^k\|.
	\end{align*}
	For $R^k_4(\bm{\delta}^k)$, from  \cref{bounded-property} and the inverse inequality \cref{inverseassum} it follows 
	\begin{align*}
		\Delta t\sum_{k=1}^{n}|R^k_4(\bm{\delta}^k)|&=\Delta t\sum_{k=1}^{n}|b(\varXi_k(\Pi_h\mathbf{v}-\mathbf{v}),\bm{\delta}^k)|\\
		&\leq C\Delta t\sum_{k=1}^{n}\|\varXi_k(\Pi_h\mathbf{v}-\mathbf{v})\|_1 \cdot \|\bm{\delta}^k\|_1\\
		&\leq Ch^{-1}\Delta t \sum_{k=1}^{n}\|\varXi_k(\Pi_h\mathbf{v}-\mathbf{v})\|_1 \cdot \|\bm{\delta}^k\|.
	\end{align*}
	
Combining the above four estimates 
and \cref{5.14}
implies
\begin{equation}\label{5.15}
	\begin{aligned}
		\|\bm{\delta}^n\|^2  \leq C \sum_{k=1}^{n} \tilde{R}_k   \|\bm{\delta}^k\| 
	\leq C S_n \sum_{k=1}^{n} \tilde{R}_k , 
	\end{aligned}
\end{equation}
where
\begin{equation*}
\left\{
\begin{aligned}
	\tilde{R}_k:=&\Delta t\int_{t_{k-1}}^{t_k}\left\|\frac{\partial^2\mathbf{v}}{\partial t^2}\right\| \, ds + \int_{t_{k-1}}^{t_k}\left\|\frac{\partial(\Pi_h\mathbf{v}-\mathbf{v})}{\partial t}\right\| \, ds\\
	&\quad + \Delta t\|\varXi_k(\B \mathbf{v})\| 
	+  Ch^{-1} \Delta t\|\varXi_k(\Pi_h\mathbf{v}-\mathbf{v})\|_1,\\
	S_n:=&\max_{1 \leq k \leq n}\|\bm{\delta}^k\|.
\end{aligned}
\right.
\end{equation*}
%
Hence, 
we have 
\begin{equation*}\label{5.16}
\begin{aligned}
	\|\bm{\delta}^n\|\leq & S_n\leq C \sum_{k=1}^{n} \tilde{R}_k\\
	\leq &C\left( \Delta t\int_{0}^{t_n}\left\|\frac{\partial^2\mathbf{v}}{\partial t^2}\right\| \, ds + \int_{0}^{t_n}\left\|\frac{\partial(\Pi_h\mathbf{v}-\mathbf{v})}{\partial t}\right\| \, ds\right.\\
		&\left.\quad + \Delta t\sum_{k=1}^{n}\|\varXi_k(\B \mathbf{v})\| + Ch^{-1}\Delta t\sum_{k=1}^{n} \|\varXi_k(\Pi_h\mathbf{v}-\mathbf{v})\|_1\right).
\end{aligned}
\end{equation*}
This estimate, together with 
 \cref{initialassu,RVprojection,RVprojection_t,varXiestimation}, yields  
    the desired  result \cref{fulldiserror}.

\end{proof}

\section{Numerical experiments}
In this section, we provide two-dimensional numerical experiments to verify the performance of the fast scheme \cref{fulldiscrete}.
 All computations were performed using MATLAB 2023a on a Windows 10 workstation equipped with a 3.40~GHz processor and 32~GB RAM. 
 
 We consider the viscoelastic model~\eqref{model} in   $\Omega = [0,1] \times [0,1]$ with the following parameters: final simulation time $ T = 1 $, relaxation time $ \tau_\sigma = 1/2 $, retardation time $ \tau_\varepsilon = 1 $, material density $ \rho = 1 $, and SOE parameters $q=10$ and $\epsilon=\frac{\Delta t}{10}$. The elasticity tensors are characterized by the parameters $\mu_C = \lambda_C = 1$ for $\mathbb{C}$, and $\mu_D = 1$, $\lambda_D = 2$ for $\mathbb{D}$. We use both triangular and square meshes for the spatial region $\Omega$, and uniform grids for the time region $[0,T]$. The spatial meshes for $\Omega$ are illustrated in \cref{fig:mesh}.
 
  To assess numerical accuracy, we compute the $L^2$-error for the velocity field at final time $T$:
\begin{equation*}
	\text{Error} := \| \mathbf{v}_h(T) - \mathbf{v}(T) \|_{{\bf L}^2(\Omega)}.
\end{equation*}
From  Theorem~\ref{fulldiscrete-error}  we know that the theoretical accuracy of this $L^2$-error  is 
\begin{equation}
	\text{Error}=  \mathcal{O} (h^2 + \Delta t+\epsilon).
	\label{eq:velocity_error}
\end{equation}

	
	
	\begin{figure}[H] 
	\centering
	\begin{minipage}{0.49\textwidth}
		\centering
		\hspace*{-3mm}
		\includegraphics[width=0.51\linewidth]{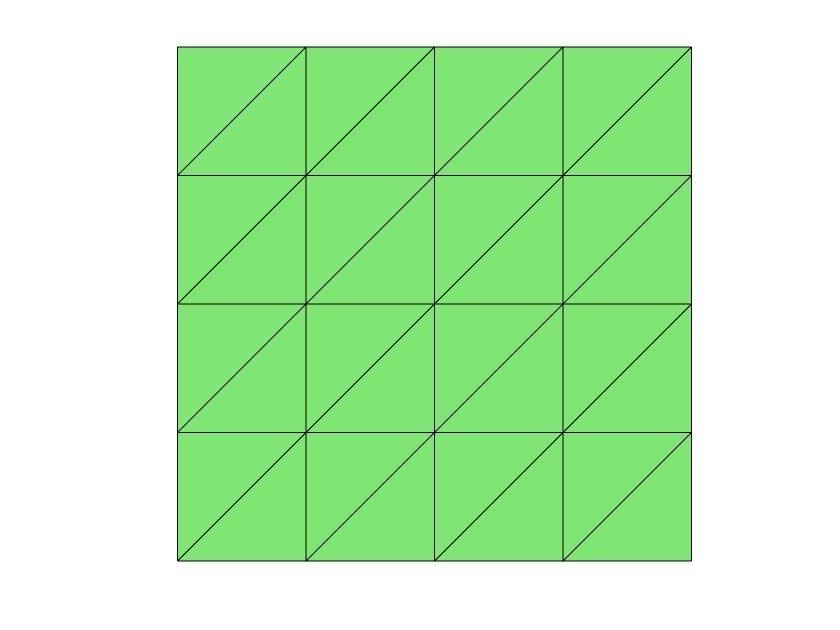}
		\hspace*{-4mm}
		\includegraphics[width=0.51\linewidth]{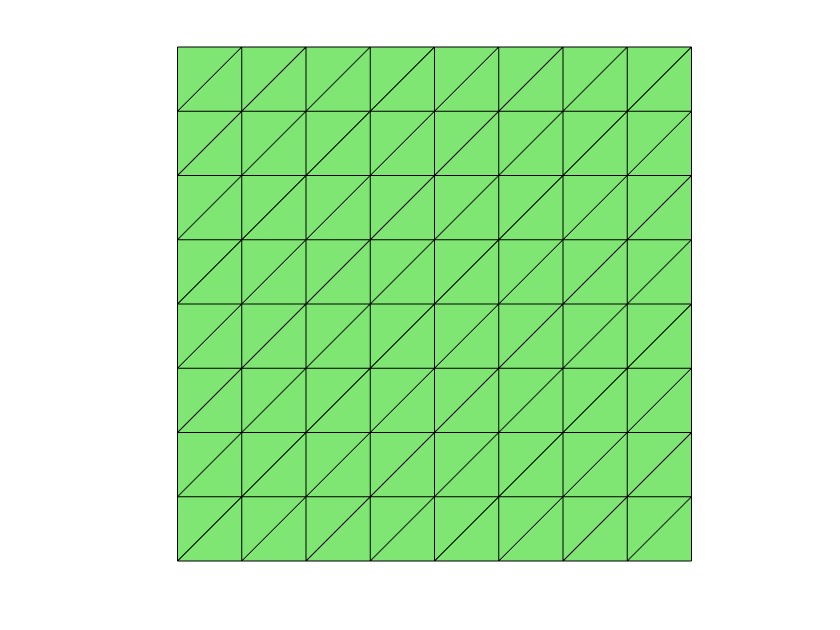}
		\hspace*{-3mm}
		
		\vspace{2mm}
		\text{Triangular meshes}
	\end{minipage}
	\hfill
	\begin{minipage}{0.49\textwidth}
		\centering
		\hspace*{-3mm}
		\includegraphics[width=0.51\linewidth]{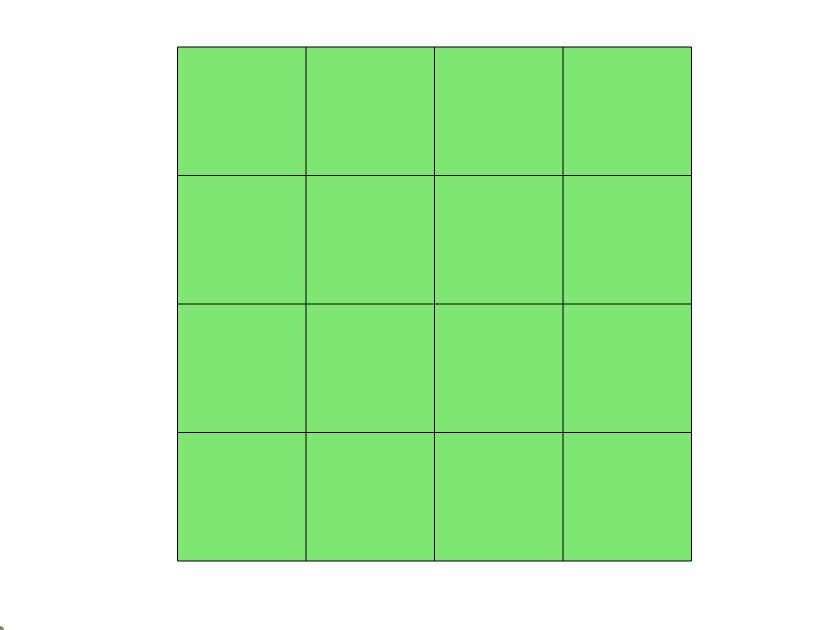}
		\hspace*{-4mm}
		\includegraphics[width=0.51\linewidth]{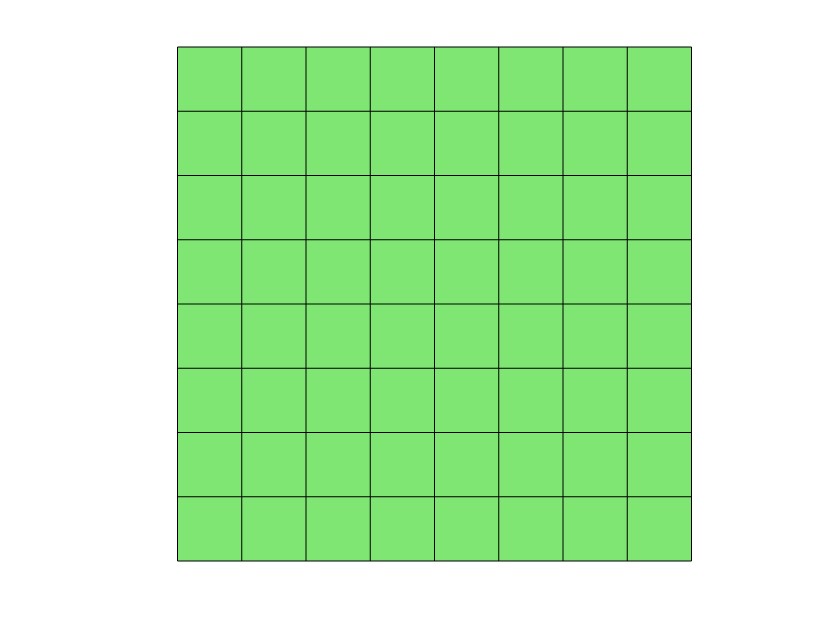}
		\hspace*{-3mm}
		
		\vspace{2mm} 
		\text{Square meshes}
	\end{minipage}
	\caption{ The domain $\Omega$: two types of  meshes}
	\label{fig:mesh}
	\end{figure}

\begin{example}\label{example3}
	The exact velocity field $\mathbf{v}(x,t)$ is given by
	\begin{align*}
		\mathbf{v}(x,t)=\left(
		\begin{array}{l}
			e^{-t}\sin(\pi x)\sin(\pi y)\\
			e^{-t}\sin(\pi x)\sin(\pi y)
		\end{array}
		\right),
	\end{align*}
	Numerical results   are shown in \cref{ex3:combined_error_h,ex3:combined_error_t}, 
	and the wall time and   memory cost of the computation are demonstrated in \cref{difftimeandmemory-2d-trian,difftimeandmemory-2d}.
\end{example}

\begin{table}[H]
	\centering
	\small
	\renewcommand{\arraystretch}{1.1} 
	\setlength{\tabcolsep}{4pt} 
	\renewcommand{\arraystretch}{1.2}
	\begin{tabular}{c c *{6}{>{$}c<{$}}}
		\toprule
		\multirow{2}{*}{Mesh} 
		& \multirow{2}{*}{$\frac{h}{\sqrt{2}}$} 
		& \multicolumn{2}{c}{$\alpha = 0.3$} 
		& \multicolumn{2}{c}{$\alpha = 0.5$} 
		& \multicolumn{2}{c}{$\alpha = 0.8$} \\
		\cmidrule(lr){3-4} \cmidrule(lr){5-6} \cmidrule(lr){7-8}
		& & \text{Error} & \text{Order} 
		& \text{Error} & \text{Order} 
		& \text{Error} & \text{Order} \\
		\midrule
		\multirow{5}{*}{\rotatebox{90}{Triangular}\hspace{1mm}} 
		& 1/4 & 4.90\times10^{-2} & -- & 4.93\times10^{-2} & -- & 4.98\times10^{-2} & -- \\
		& 1/8 & 9.67\times10^{-3} & 2.34 & 9.79\times10^{-3} & 2.33 & 9.99\times10^{-3} & 2.32 \\
		& 1/16 & 2.17\times10^{-3} & 2.16 & 2.20\times10^{-3} & 2.15 & 2.25\times10^{-3} & 2.15 \\
		& 1/32 & 5.13\times10^{-4} & 2.08 & 5.33\times10^{-4} & 2.05 & 5.43\times10^{-4} & 2.05 \\
		& 1/64 & 1.13\times10^{-4} & 2.18 & 1.31\times10^{-4} & 2.03 & 1.34\times10^{-4} & 2.02 \\
		\midrule
		\multirow{5}{*}{\rotatebox{90}{Square}\hspace{1mm}} 
		& 1/4 & 1.82\times10^{-2} & -- & 1.88\times10^{-2} & -- & 1.96\times10^{-2} & -- \\
		& 1/8 & 4.58\times10^{-3} & 1.99 & 4.73\times10^{-3} & 1.99 & 4.98\times10^{-3} & 1.97 \\
		& 1/16 & 1.18\times10^{-3} & 1.96 & 1.22\times10^{-3} & 1.96 & 1.27\times10^{-3} & 1.97 \\
		& 1/32 & 2.86\times10^{-4} & 1.99 & 3.07\times10^{-4} & 1.99 & 3.19\times10^{-4} & 2.00 \\
		& 1/64 & 5.74\times10^{-5} & 2.02 & 7.57\times10^{-5} & 2.02 & 7.91\times10^{-5} & 2.00 \\
		\bottomrule
	\end{tabular}
	\caption{Spatial accuracy test for \cref{example3} on different meshes with $\Delta t = \frac{h^2}{2}$.}
	\label{ex3:combined_error_h}
\end{table}

\begin{table}[H]
	\centering
	\small
	\renewcommand{\arraystretch}{1.1} 
	\setlength{\tabcolsep}{4pt} 
	\renewcommand{\arraystretch}{1.2}
	\begin{tabular}{c c *{6}{>{$}c<{$}}}
		\toprule
		\multirow{2}{*}{Mesh} 
		& \multirow{2}{*}{$\Delta t$} 
		& \multicolumn{2}{c}{$\alpha = 0.3$} 
		& \multicolumn{2}{c}{$\alpha = 0.5$} 
		& \multicolumn{2}{c}{$\alpha = 0.8$} \\
		\cmidrule(lr){3-4} \cmidrule(lr){5-6} \cmidrule(lr){7-8}
		& & \text{Error} & \text{Order} 
		& \text{Error} & \text{Order} 
		& \text{Error} & \text{Order} \\
		\midrule
		\multirow{5}{*}{\rotatebox{90}{Triangular}\hspace{1mm}} 
		&1/5 & 2.57\times10^{-2} & -- & 3.43\times10^{-2} & -- & 4.35\times10^{-2} & -- \\
		&1/10 & 1.17\times10^{-2} & 1.13 & 1.53\times10^{-2} & 1.17 & 1.90\times10^{-2} & 1.19 \\
		&1/20 & 5.58\times10^{-3} & 1.07 & 7.19\times10^{-3} & 1.08 & 8.93\times10^{-3} & 1.09 \\
		&1/40 & 2.86\times10^{-3} & 0.97 & 3.63\times10^{-3} & 0.99& 4.42\times10^{-3} & 1.02 \\
		&1/80 & 1.44\times10^{-3} & 0.99 & 1.83\times10^{-3} & 0.99 & 2.22\times10^{-3} & 1.00 \\
		\midrule
		\multirow{5}{*}{\rotatebox{90}{Square}\hspace{1mm}} 
		&1/5 & 2.56\times10^{-2} & -- & 3.43\times10^{-2} & -- & 4.34\times10^{-2} & -- \\
		&1/10 & 1.17\times10^{-2} & 1.13 & 1.52\times10^{-2} & 1.17 & 1.90\times10^{-2} & 1.20 \\
		&1/20 & 5.54\times10^{-3} & 1.08 & 7.15\times10^{-3} & 1.09 & 8.89\times10^{-3} & 1.09 \\
		&1/40 & 2.81\times10^{-3} & 0.98 & 3.58\times10^{-3} & 1.00& 4.38\times10^{-3} & 1.02 \\
		&1/80 & 1.41\times10^{-3} & 1.00 & 1.79\times10^{-3} & 1.00 & 2.18\times10^{-3} & 1.01 \\
		\bottomrule
	\end{tabular}
	\caption{Temporal accuracy test for  \cref{example3} with $h = \frac{\sqrt{2}}{64}$ on different meshes.}
	\label{ex3:combined_error_t}
\end{table}

\begin{figure}[H]
	\centering 
	\subfigure{
		\includegraphics[width=.4\linewidth]{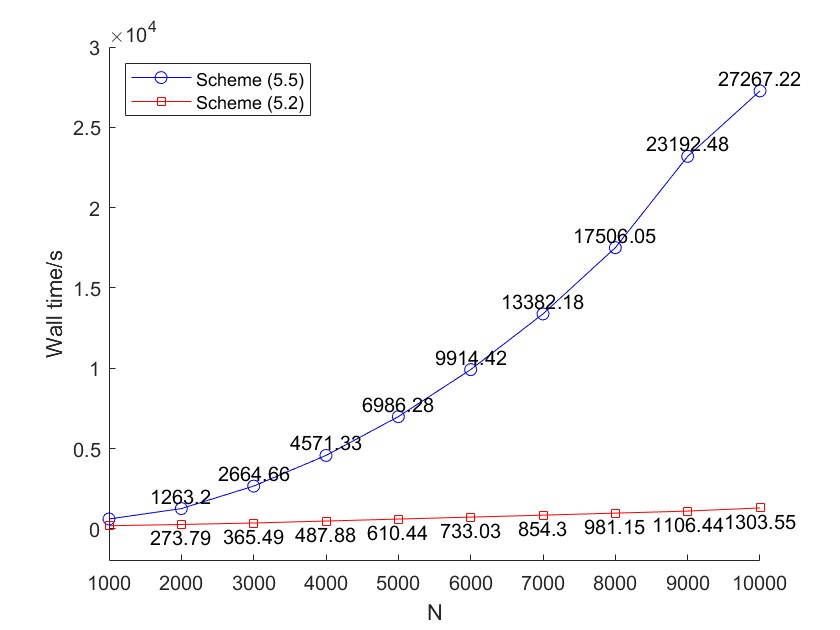}
		\label{difftime-2d-trian}
	}
	\subfigure{
		\includegraphics[width=.4\linewidth]{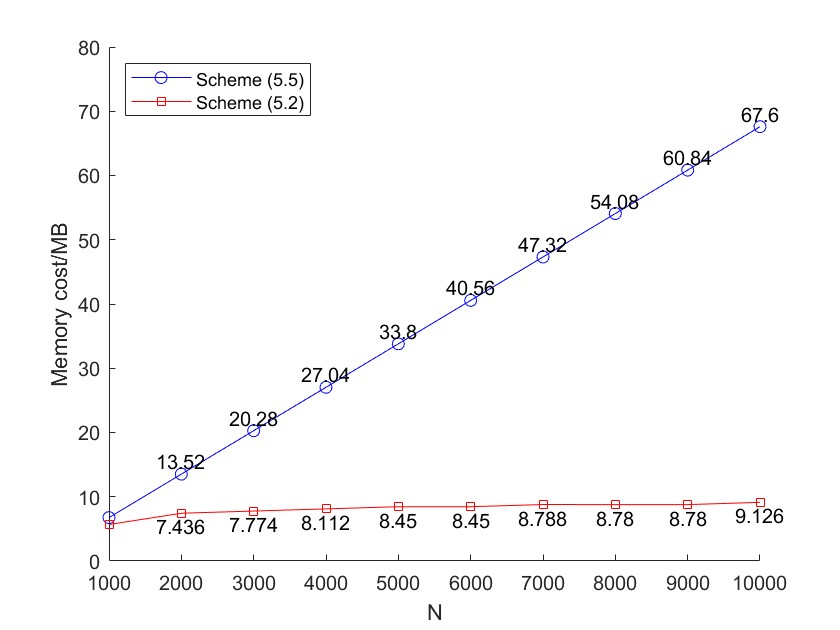}
		\label{diffmemory-2d-trian}
	}
	\caption{ Comparison of wall time and memory cost between the fast scheme \cref{fulldiscrete} and the full discretization  \cref{opp-fulldiscrete} for \cref{example3} on triangular meshes at different time steps $t_N$:  $h=\frac{\sqrt{2}}{64}$ and $\alpha=0.5$. }
	\label{difftimeandmemory-2d-trian}	
\end{figure}

%

\begin{figure}[H]
	\centering 
	\subfigure{
		\includegraphics[width=.4\linewidth]{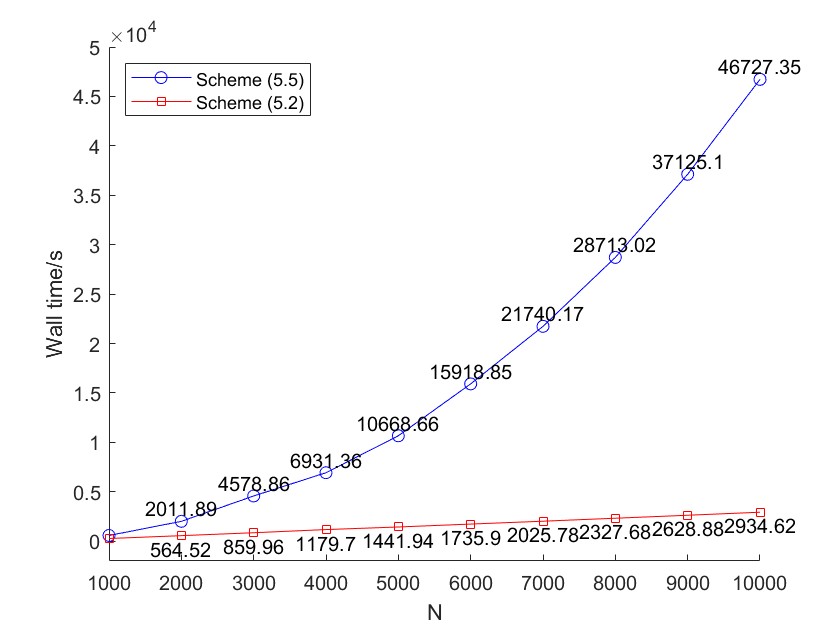}
		\label{difftime-2d}
	}
	\subfigure{
		\includegraphics[width=.4\linewidth]{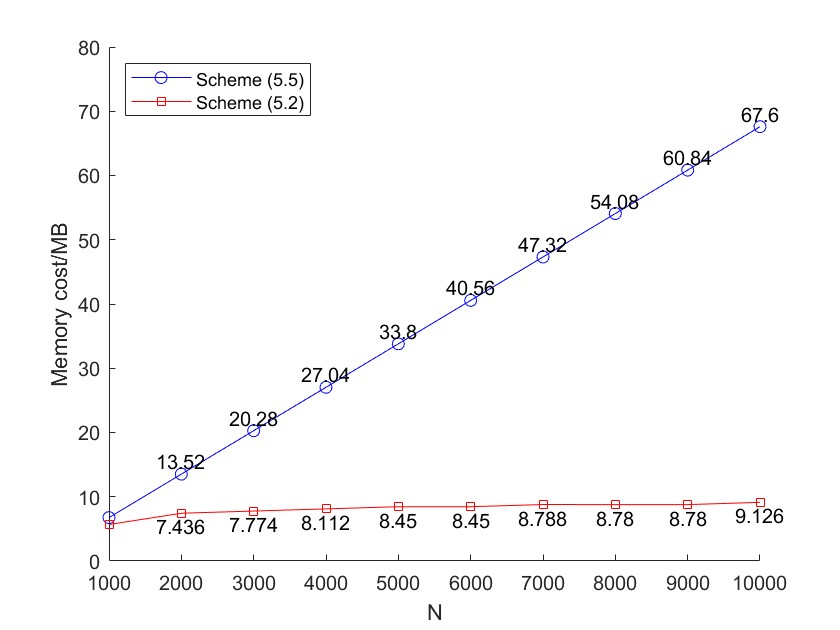}
		\label{diffmemory-2d}
	}
	\caption{ Comparison of wall time and memory cost between the fast scheme \cref{fulldiscrete} and the full discretization  \cref{opp-fulldiscrete} for \cref{example3} on square meshes at different time steps $t_N$:  $h=\frac{\sqrt{2}}{64}$ and $\alpha=0.5$.}
	\label{difftimeandmemory-2d}	
\end{figure}

\begin{example}\label{example4}
	The exact velocity field $\mathbf{v}(x,t)$ is given by
	\begin{align*}
		\mathbf{v}(x,t)=\left(
		\begin{array}{l}
			e^{-t}(x^4-2x^3+x^2)(4y^3-6y^2+2y)\\
			e^{-t}(y^4-2y^3+y^2)(4x^3-6x^2+2x)
		\end{array}
		\right),
	\end{align*}
	Numerical results   are shown in \cref{ex4:combined_error_h,ex4:combined_error_t}.
\end{example}

\begin{table}[H]
	\centering
	\small
	\renewcommand{\arraystretch}{1.1} 
	\setlength{\tabcolsep}{4pt} 
	\renewcommand{\arraystretch}{1.2}
	\begin{tabular}{c c *{6}{>{$}c<{$}}}
		\toprule
		\multirow{2}{*}{Mesh} 
		& \multirow{2}{*}{$\frac{h}{\sqrt{2}}$} 
		& \multicolumn{2}{c}{$\alpha = 0.3$} 
		& \multicolumn{2}{c}{$\alpha = 0.5$} 
		& \multicolumn{2}{c}{$\alpha = 0.8$} \\
		\cmidrule(lr){3-4} \cmidrule(lr){5-6} \cmidrule(lr){7-8}
		& & \text{Error} & \text{Order} 
		& \text{Error} & \text{Order} 
		& \text{Error} & \text{Order} \\
		\midrule
		\multirow{5}{*}{\rotatebox{90}{Triangular}\hspace{1mm}} 
		&1/4 & 1.07\times10^{-3} & -- & 1.07\times10^{-3} & -- & 1.07\times10^{-3} & -- \\
		&1/8 & 2.62\times10^{-4} & 2.03 & 2.63\times10^{-4} & 2.03 & 2.65\times10^{-4} & 2.02 \\
		&1/16 & 6.52\times10^{-5} & 2.00 & 6.55\times10^{-5} & 2.00 & 6.60\times10^{-5} & 2.00 \\
		&1/32 & 1.63\times10^{-5} & 2.00 & 1.65\times10^{-5} & 1.99 & 1.66\times10^{-5} & 1.99 \\
		&1/64 & 3.96\times10^{-6} & 2.04 & 4.11\times10^{-6} & 2.00 & 4.14\times10^{-6} & 2.00 \\
		\midrule
		\multirow{5}{*}{\rotatebox{90}{Square}\hspace{1mm}} 
		&1/4 & 6.38\times10^{-4} & -- & 6.39\times10^{-4} & -- & 6.42\times10^{-4} & -- \\
		&1/8 & 1.57\times10^{-4} & 2.02 & 1.58\times10^{-4} & 2.02 & 1.59\times10^{-4} & 2.02 \\
		&1/16 & 3.91\times10^{-5} & 2.01 & 3.92\times10^{-5} & 2.01 & 3.94\times10^{-5} & 2.01 \\
		&1/32 & 9.71\times10^{-6} & 2.00 & 9.79\times10^{-6} & 2.00 & 9.84\times10^{-6} & 2.00 \\
		&1/64 & 2.38\times10^{-6} & 2.00 & 2.44\times10^{-6} & 2.00 & 2.46\times10^{-6} & 2.00 \\
		\bottomrule
	\end{tabular}
	\caption{Spatial accuracy test  for \cref{example4}  on different meshes with $\Delta t = \frac{h^2}{2}$.}
	\label{ex4:combined_error_h}
\end{table}

\begin{table}[H]
	\centering
	\small
	\renewcommand{\arraystretch}{1.1} 
	\setlength{\tabcolsep}{4pt} 
	\renewcommand{\arraystretch}{1.2}
	\begin{tabular}{c c *{6}{>{$}c<{$}}}
		\toprule
		\multirow{2}{*}{Mesh} 
		& \multirow{2}{*}{$\Delta t$} 
		& \multicolumn{2}{c}{$\alpha = 0.3$} 
		& \multicolumn{2}{c}{$\alpha = 0.5$} 
		& \multicolumn{2}{c}{$\alpha = 0.8$} \\
		\cmidrule(lr){3-4} \cmidrule(lr){5-6} \cmidrule(lr){7-8}
		& & \text{Error} & \text{Order} 
		& \text{Error} & \text{Order} 
		& \text{Error} & \text{Order} \\
		\midrule
		\multirow{5}{*}{\rotatebox{90}{Triangular}\hspace{1mm}} 
		&1/5 & 2.59\times10^{-4} & -- & 3.52\times10^{-4} & -- & 4.56\times10^{-4} & -- \\
		&1/10 & 1.29\times10^{-4} & 1.01 & 1.67\times10^{-4} & 1.08 & 2.07\times10^{-4} & 1.14 \\
		&1/20 & 6.20\times10^{-5} & 1.05 & 7.94\times10^{-5} & 1.07 & 9.80\times10^{-5} & 1.08 \\
		&1/40 & 3.09\times10^{-5} & 1.01 & 3.91\times10^{-5} & 1.02& 4.85\times10^{-5} & 1.02 \\
		&1/80 & 1.54\times10^{-5} & 1.00 & 1.95\times10^{-5} & 1.00 & 4.42\times10^{-5} & 1.00 \\
		\midrule
		\multirow{5}{*}{\rotatebox{90}{Square}\hspace{1mm}} 
		&1/5 & 2.78\times10^{-4} & -- & 3.71\times10^{-4} & -- & 4.65\times10^{-4} & -- \\
		&1/10 & 1.26\times10^{-4} & 1.14 & 1.64\times10^{-4} & 1.17 & 2.05\times10^{-4} & 1.19 \\
		&1/20 & 5.99\times10^{-5} & 1.08 & 7.73\times10^{-5} & 1.09 & 9.60\times10^{-5} & 1.09 \\
		&1/40 & 3.05\times10^{-5} & 0.98 & 3.88\times10^{-5} & 1.00 & 4.74\times10^{-5} & 1.02 \\
		&1/80 & 1.54\times10^{-5} & 0.99 & 1.95\times10^{-5} & 0.99 & 2.37\times10^{-5} & 1.00 \\
		\bottomrule
	\end{tabular}
	\caption{Temporal accuracy test for \cref{example4}  on different meshes with $h = \frac{\sqrt{2}}{64}$.}
	\label{ex4:combined_error_t}
\end{table}

From all the numerical results we have the following observations:
\begin{itemize}
	\item From   \cref{ex3:combined_error_h,ex4:combined_error_h} we see that the spatial convergence rate of    $\| \mathbf{v}_h(T) - \mathbf{v}(T) \|_{{\bf L}^2(\Omega)}$ for the   fast scheme is second order, which is 
	consistent with the theoretical accuracy \cref{eq:velocity_error} due to the setting $\Delta t = \frac{h^2}{2}$ and $\epsilon=\frac{\Delta t}{10}$.
	
	
	\item From \cref{ex3:combined_error_t,ex4:combined_error_t}  we see that  the temporal  convergence rate of    $\| \mathbf{v}_h(T) - \mathbf{v}(T) \|_{{\bf L}^2(\Omega)}$   is first order, which is also
	conformable to the theoretical accuracy \cref{eq:velocity_error} with $\epsilon=\frac{\Delta t}{10}$ and a sufficiently small spatial mesh size $h = \frac{\sqrt{2}}{64}$.
	
	
	\item From \cref{difftimeandmemory-2d-trian,difftimeandmemory-2d} we see that,  compared with  those of the scheme \cref{opp-fulldiscrete}, 
	both the wall time and  memory  cost of the  fast scheme \cref{fulldiscrete} are significantly  reduced, especially  as $N$ becomes large. 
\end{itemize}
 
\section{Conclusions}
We have shown the existence, uniqueness and regularity of the weak solution to the velocity type integro-differential equation of  fractional viscoelastic  wave propagation.  For the spatial semi-discretization using low order conforming finite elements and the fast full discretization using the backward-Euler scheme and the SOE approximation for  the temporal discretization, we have derived rigorous  error estimates. 
Numerical results have confirmed  the theoretical analysis and demonstrated the   efficiency of the developed fast scheme.

\bibliographystyle{plain}
\normalem
\bibliography{fraviscoelastic}

\end{document}